\def\subsection{\@startsection{subsection}{2}%
 \z@{.5\linespacing\@plus.7\linespacing}{.3\linespacing}%
 {\normalfont\bfseries\centering}}
\newtheorem{lemma}{Lemma}[section]
\newtheorem{theorem}[lemma]{Theorem}
\newtheorem{propo}[lemma]{Proposition}
\newtheorem{prop}[lemma]{Proposition}
\newtheorem{cor}[lemma]{Corollary}
\newtheorem{claim*}{Claim}
\newtheorem{thm}[lemma]{Theorem}
\newtheorem{defn}[lemma]{Definition}
\theoremstyle{remark}
\newtheorem{remark}[lemma]{Remark}
\newcommand{\isom}{\cong}
\newcommand{\m}{\mathfrak m}
\newcommand{\lideal}{\langle}
\newcommand{\rideal}{\rangle}
\newcommand{\initial}{\operatorname{in}}
\newcommand{\Spec}{\operatorname{Spec}}
\newcommand{\ch}{\operatorname{char}} 
\newcommand{\Proj}{\operatorname{Proj}}
\newcommand{\Pic}{\operatorname{Pic}}
\newcommand{\Hom}{\operatorname{Hom}} 
\newcommand{\sheafHom}{\mathcal{H}om}
\newcommand{\Gr}{\operatorname{Gr}}
\newcommand{\Sym}{\operatorname{Sym}} 
\newcommand{\GL}{{GL}}
\newcommand{\Syz}{\operatorname{Syz}}
\newcommand{\defi}[1]{{\upshape\sffamily #1}}
\numberwithin{equation}{section}
\numberwithin{table}{section}
\newif\ifversion
\def\cvs $#1 $ $#2 ${\def\cvsversion{\ifversion{\color{red}%
    Last checkin: #2 #1\par}\fi}}
\title{Hilbert Schemes of $8$ Points}
\author{Dustin A. Cartwright}
\author{Daniel Erman}
\author{Mauricio Velasco}
\author{Bianca Viray}
\address{Department of Mathematics \\ University of California \\ Berkeley, CA
94720 \\ USA}
\email{dustin@math.berkeley.edu, derman@math.berkeley.edu,
velasco@math.berkeley.edu, bviray@math.berkeley.edu}
\thanks{The first author was supported by an NSF EMSW21 fellowship. The second author is supported by an
NDSEG fellowship. The third author is partially supported by NSF grant DMS-0802851. The fourth author was supported by a Mentored Research Award.  }
\begin{document}

\begin{abstract}
The Hilbert scheme $H^d_n$ of $n$ points in $\mathbb A^d$ contains an irreducible component $R^d_n$ which generically represents $n$ distinct points in $\mathbb A^d$. We show that when $n$ is at most $8$, the Hilbert scheme $H^d_n$ is reducible if and only if $n=8$ and $d\geq 4$.  In the simplest case of reducibility, the component $R^4_8 \subset H^4_8$ is defined by a single explicit equation which serves as a criterion for deciding whether a given ideal is a limit of distinct points.

To understand the components of the Hilbert scheme, we study the closed
subschemes of $H_n^d$ which parametrize those ideals which are homogeneous and
have a fixed Hilbert function. These subschemes are a special case of multigraded
Hilbert schemes, and we describe their components when the colength is at most
$8$. In particular, we show that the scheme corresponding to the Hilbert function $(1, 3,2,1)$ is the minimal reducible example.
\end{abstract}
\maketitle

\ifx\thepage\undefined\def\jobname{hilb}

\begin{abstract}
The Hilbert scheme $H^d_n$ of $n$ points in $\mathbb A^d$ contains an irreducible component $R^d_n$ which generically represents $n$ distinct points in $\mathbb A^d$. We show that when $n$ is at most $8$, the Hilbert scheme $H^d_n$ is reducible if and only if $n=8$ and $d\geq 4$.  In the simplest case of reducibility, the component $R^4_8 \subset H^4_8$ is defined by a single explicit equation which serves as a criterion for deciding whether a given ideal is a limit of distinct points.

To understand the components of the Hilbert scheme, we study the closed
subschemes of $H_n^d$ which parametrize those ideals which are homogeneous and
have a fixed Hilbert function. These subschemes are a special case of multigraded
Hilbert schemes, and we describe their components when the colength is at most
$8$. In particular, we show that the scheme corresponding to the Hilbert function $(1, 3,2,1)$ is the minimal reducible example.
\end{abstract}
\maketitle

\ifx\thepage\undefined\def\jobname{hilb}\fi
\cvs $Date: 2008/06/13 17:19:43 $ $Author: dustin $
\section{Background}\label{sec:Background}\cvsversion

In this section, let $k$ be a field and $S=k[x_1, \ldots, x_d]$.

\subsection{Multigraded Hilbert schemes}

A grading of $S$ by an abelian group $A$ is a semigroup homomorphism $\deg\colon
\mathbb{N}^d\rightarrow A$ which assigns to each monomial in $S$ a degree in
$A$. Let $h\colon A\rightarrow \mathbb{N}$ be an arbitrary function, which we will think of
as a vector $\vec h$, with values $h_a$ indexed by $a$ in $A$. We say that a
homogeneous ideal $I$ in $S$ has Hilbert function $\vec h$ if $S_a/I_a$ has $k$-dimension $h_a$ for all $a\in A$.  
The multigraded Hilbert schemes, introduced by Haiman and Sturmfels, parametrize
homogeneous ideals with a fixed Hilbert function~\cite{haiman-sturmfels}. More precisely these are quasi-projective schemes over $k$ which represent the following functors~\cite[Theorem 1.1]{haiman-sturmfels}:

\begin{defn}\label{defn:multigraded} For a fixed integer $d$, grading $\deg$, and Hilbert function $\vec h$, the \defi{multigraded Hilbert functor} $\mathcal{H}_{\vec h}\colon \underline{k\operatorname{-Alg}}\rightarrow \underline{\operatorname{Set}}$ assigns to each $k$-algebra $T$, the set of homogeneous ideals $J$ in $S\otimes T$ such that the graded component $(S \otimes T/J)_a$ is a locally free $T$-module of rank $h_a$ for all $a$ in $A$.
The \defi{multigraded Hilbert scheme} is the scheme which represents the
multigraded Hilbert functor.

In particular, we will be interested in the following two special kinds of multigraded Hilbert schemes:
\begin{itemize}
\item Let $\deg\colon\mathbb{N}^d\rightarrow 0$ be the constant function to the trivial group and define $h_0=n$. In this case the multigraded Hilbert scheme is  the \defi{Hilbert scheme of $n$ points in $\mathbb A^d$} and will be denoted $H_n^d$.
\item Let $\deg\colon\mathbb{N}^d\rightarrow \mathbb{Z}$ be the summation function, which induces the standard grading $\deg(x_i)=1$. We call the corresponding multigraded Hilbert scheme the \defi{standard graded Hilbert scheme for Hilbert function $\vec h$} and denote it with $\mathcal H^d_{\vec h}$.
\end{itemize}
\end{defn}

If $n=\sum_{j\in \mathbb{N}}h_j$ there is a closed immersion $\mathcal H_{\vec h}^d \rightarrow H_n^d$ by~\cite[Prop. 1.5]{haiman-sturmfels}.

\subsection{Coordinates for the Hilbert scheme of points}  \label{subsec:coordinates}
In this section we briefly discuss some coordinate systems on $H^d_n$. The reader should refer to~\cite[Ch. 18]{cca} for an extended treatment.  For a monomial ideal $M$ of colength $n$ with standard monomials $\lambda$, let $U_{\lambda}\subset H^d_n$ be the set of ideals $I$ such that the monomials in $\lambda$ are a basis for $S/I$. Note that the $U_{\lambda}$ form an open cover of $H^d_n$.  An ideal $I\in U_{\lambda}$ has generators of the form $m-\sum_{m'\in\lambda}c_{m'}^mm'$.
The $c_{m'}^m$ are local coordinates for $U_{\lambda}$ which define a closed immersion into affine space. 

Suppose $V(I)$ consists of $n$ distinct points $q^{(1)},\dots, q^{(n)}$ with coordinates $q_i^{(j)}$ for $1\leq i\leq d$. Fix an order $\lambda=(m_1,\dots, m_n)$ on the set of monomials $\lambda$ and define $\Delta_{\lambda}=\det\left([m_i(q^{(j)})]_{i,j}\right)$. For example, if $\lambda = (1, x_1,\ldots, x_1^{n-1})$, then $\Delta_\lambda$ is the determinant of the Vandermonde matrix on the $q^{(j)}_{1}$. If $I\in U_{\lambda}$, we can express the $c_{m'}^m$ in terms of the $q_i^{(j)}$ using Cramer's rule as: 
\[c_{m'}^m=\frac{\Delta_{\lambda-m'+m}}{\Delta_{\lambda}}\] 
where $\lambda-m'+m$ is the ordered set of monomials obtained from $\lambda$ by
replacing $m'$ with $m$.  Note that the right-hand side of this equality is only
defined for ideals of distinct points.  The quotient or product of two
$\Delta_{\lambda}$s is $S_n$-invariant.  Thus the formula does not depend on the
order of $\lambda$. Gluing over the various $U_\lambda$, these quotients
determine a birational map $(\mathbb{A}^d)^n\mathord\sslash S_n \dasharrow R^d_n$ which is regular when the points $q^{(j)}$ are all distinct. The rational functions $\frac{\Delta_{\eta_1}}{\Delta_{\eta_2}}$ are elements of the quotient field of either $(\mathbb{A}^d)^n\mathord\sslash S_n$ or $R^d_n$.  The expressions $\Delta_\lambda$ and their relationship to the local equations $c_{m'}^m$  were introduced in \cite[Proposition 2.6]{haiman-catalan}.

\subsection{Duality}
First suppose that $k$ has characteristic $0$, and let $S^*$ be the ring $k[y_1, \ldots, y_d]$, with the structure of an $S$-module via formal partial differentiation $x_i \cdot f=\frac{\partial f}{\partial y_i}$. If we look at homogeneous polynomials of a fixed degree $j$ in each of the two rings, we have a pairing $S_j \times S_j^* \rightarrow S_0^* = k$. Any vector subspace of $S_j$ has an orthogonal subspace in $S_j^*$ of complementary dimension. 
In particular, if $I$ is a homogeneous ideal in $S$, we have subspaces $I_j^\perp \subset S_j^*$ and we set $I^\perp = \oplus I_j^\perp$. The subspace $I^\perp$ is closed under differentiation, i.e., $\frac{\partial}{\partial y_i}I_j^\perp \subset I_{j-1}^\perp$ for all $i$ and~$j$.  Conversely, any graded vector subspace $I^\perp \subset S^*$ which is closed under differentiation determines an orthogonal ideal $I\subset S$ with Hilbert function $h_j = \dim_k I_j^\perp$.  Also, note that any linear change of variables in $S$ induces a linear change of variables in $S^*$.

If $k$ has positive characteristic, then the same theory works for sufficiently small degree. Formal partial differentiation gives a perfect pairing $S_j \times S_j^* \rightarrow k$ if and only if $j$ is less than $p$. Thus, we can associate orthogonal subspaces $I_j^\perp$ to a homogeneous ideal $I$ so long as $I_p$ contains all of $S_p$. In this case, we define $I_j^\perp = 0$ for all degrees~$j$ at least~$p$, and $I = \oplus I_j$ as before. Conversely, for a graded vector subspace $I^\perp \subset S_*$ which is closed under differentiation and with $I_j^\perp = 0$ for $j$ at least $p$, the orthogonal space is a homogeneous ideal $I \subset S$ with Hilbert function $h_j = \dim_k I_j^\perp$.

\ifx\thepage\undefined\def\jobname{hilb}\fi
\cvs $Date: 2008/06/16 16:02:05 $ $Author: dustin $
\section{Components of the standard graded Hilbert schemes}\label{sec:stdgraded}
\cvsversion

In this section, $k$ will denote an algebraically closed field of characteristic not~$2$ or~$3$.  

We will study the components of the standard graded Hilbert schemes $\mathcal H^d_{\vec h}$ with Hilbert function $\vec h$ where $\sum h_i \leq 8$.  These results will be important for the proofs of smoothability in the following section.  From~\cite[Theorem 1]{evain}, we have that for $d = 2$, the standard graded Hilbert schemes are irreducible. Thus, we will only work with $d$ at least $3$.  For the purposes of classifying irreducible components of $\mathcal H^d_{\vec h}$, it is convenient to work with homogeneous ideals which contain no linear forms, and thus we assume that $h_1=d$. The following lemma allows us to restrict our attention to this case:

\begin{prop} \label{prop:stdgraded-grassmannian-bundle}
The standard graded Hilbert scheme $\mathcal H_{\vec h}^d$ with $d \geq h_1$ is
a $\mathcal H_{\vec h}^{h_1}$-bundle over $\Gr(d-h_1, S_1)$.  In particular, if $\mathcal H^{h_1}_{\vec h}$ is irreducible of dimension~$D$ then $\mathcal H^{d}_{\vec h}$ is irreducible of dimension $D+(d-h_1)d$.
\end{prop}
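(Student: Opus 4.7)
The plan is to construct a morphism $\phi\colon \mathcal H_{\vec h}^d \to \Gr(d-h_1, S_1)$ sending an ideal to its degree-$1$ part, and then to show that $\phi$ is Zariski-locally trivial with fiber $\mathcal H_{\vec h}^{h_1}$.

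First, I would build $\phi$ functorially. For any $T$-point of $\mathcal H_{\vec h}^d$ corresponding to a homogeneous ideal $I \subset S \otimes T$ with Hilbert function $\vec h$, the degree-one component $(S\otimes T/I)_1$ is locally free of rank $h_1$ by definition of the functor, so $I_1 \subset S_1 \otimes T$ is a locally free rank $(d-h_1)$ subbundle. The universal property of the Grassmannian then produces $\phi$. At the level of closed points, the fiber over $[W]$ parametrizes ideals $I$ with $I_1=W$; any such $I$ descends to a homogeneous ideal of $S/(W)=\mathrm{Sym}(S_1/W)$ with the same Hilbert function in degrees $\geq 1$, and $\dim_k S_1/W = h_1$, so the fiber is in bijection with $\mathcal H_{\vec h}^{h_1}$.

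Second and most substantively, I would upgrade the fiberwise description to local triviality using the standard affine cover of the Grassmannian. For each $h_1$-dimensional complement $W'\subset S_1$, let $U_{W'}\subset \Gr(d-h_1,S_1)$ be the open locus of subspaces transverse to $W'$. Over $U_{W'}$ the tautological subbundle $\mathcal W$ admits the canonical trivial complement $W'\otimes \mathcal O_{U_{W'}}$, yielding a splitting $S_1\otimes \mathcal O_{U_{W'}}=\mathcal W\oplus (W'\otimes \mathcal O_{U_{W'}})$. Taking symmetric algebras, a $T$-point of $\phi^{-1}(U_{W'})$ is determined by its image in the quotient $(S\otimes T)/(\mathcal W_T\cdot S\otimes T) \cong (\mathrm{Sym}\,W')\otimes T = T[y_1,\ldots,y_{h_1}]$, and this image is precisely a $T$-family of homogeneous ideals with Hilbert function $\vec h$. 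This gives an isomorphism of functors
\begin{equation*}
\phi^{-1}(U_{W'}) \;\isom\; U_{W'} \times \mathcal H_{\vec h}^{h_1},
\end{equation*}
which is exactly the local triviality we want. Irreducibility and the dimension computation then follow immediately from the smoothness and irreducibility of the Grassmannian.

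The main obstacle is the functorial identification in the second step: one must verify that, once a splitting $S_1\otimes \mathcal O_{U_{W'}} = \mathcal W \oplus (W'\otimes \mathcal O_{U_{W'}})$ is fixed, passing between homogeneous ideals of $S\otimes T$ containing the image of $\mathcal W_T$ and homogeneous ideals of $\mathrm{Sym}(W')\otimes T$ is compatible with base change and preserves flatness of each graded piece. This is essentially formal once one notes that $W'\otimes \mathcal O_{U_{W'}}$ is a trivial bundle of the expected rank, so all symmetric powers and their quotients remain locally free; no characteristic hypothesis is needed here, and the rest of the argument is bookkeeping via Haiman--Sturmfels' representability.
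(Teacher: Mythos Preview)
Your proposal is correct and follows essentially the same approach as the paper: define $\phi$ via the locally free degree-$1$ part of the universal ideal, then trivialize over the standard affine charts of the Grassmannian by a change of variables. The paper compresses your second step into the single phrase ``by taking a change of variables in $\mathcal O_U[x_1,\ldots,x_d]$,'' whereas you spell out the splitting $S_1\otimes\mathcal O_{U_{W'}}=\mathcal W\oplus(W'\otimes\mathcal O_{U_{W'}})$ and the resulting identification of quotient algebras; the content is the same.
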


\begin{proof}
The degree~$1$ summand of the universal ideal sheaf of $\mathcal O_{\mathcal H_{\vec h}^d}[x_1, \ldots, x_d]$ is locally free of rank $d-h_1$ and thus defines a morphism $\phi\colon \mathcal H^d_{\vec h} \to \Gr(d-h_1, S_1)$. Over an open affine $U \isom \mathbb A^{(d-h_1)h_1}$ in $\Gr(d-h_1, S_1)$, we have an isomorphism $\phi^{-1}(U) \isom U \times H_{\vec h}^{h_1}$ by taking a  change of variables in $\mathcal O_U[x_1, \ldots, x_d]$.
\end{proof}

\begin{lemma}
Let be $m$ a positive integer such that $m!$ is not divisible by the characteristic of $k$.  Let $f(y_1, \dots, y_d)$ be a homogeneous polynomial in $S_m^*$ whose partial derivatives form an $r$-dimensional vector subspace of $S_{m-1}^*$. Then $f$ can be written as a polynomial in terms of some $r$-dimensional subspace of $S_1^*$.
\end{lemma}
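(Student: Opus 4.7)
The plan is to show that the $r$-dimensional subspace of $S_1^*$ in which $f$ is a polynomial is simply the orthogonal complement of the annihilator of $f$ inside $S_1$ (with $S_1$ acting on $S^*$ by differentiation).

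First I would introduce the \emph{apolar subspace} of $f$ in $S_1$,
\[ L \;=\; \bigl\{\ell \in S_1 : \ell \cdot f = 0 \bigr\}, \]
where $\ell = \sum c_i x_i$ acts as $\sum c_i \partial/\partial y_i$. Since $L$ is the kernel of the linear map $S_1 \to S_{m-1}^*$ sending $\ell \mapsto \ell\cdot f$, whose image is by hypothesis $r$-dimensional, one has $\dim L = d - r$. Let $V \subset S_1^*$ be the orthogonal subspace $L^\perp$ under the natural perfect pairing $S_1 \times S_1^* \to k$, so $\dim V = r$. The claim to prove is that $f$ lies in the subring $k[V] \subset S^*$.

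Next, pick a basis $\ell_{r+1},\dots,\ell_d$ of $L$ and extend it to a basis $\ell_1,\dots,\ell_d$ of $S_1$, and let $z_1,\dots,z_d$ be the dual basis of $S_1^*$, so that $z_1,\dots,z_r$ span $V$. Because the $S$-module structure on $S^*$ is defined by differentiation and is preserved under dual changes of variables, in the new coordinates the $\ell_i$ act as $\partial/\partial z_i$. By the definition of $L$, we then have
\[ \frac{\partial f}{\partial z_j} \;=\; \ell_j \cdot f \;=\; 0 \qquad \text{for every } j = r+1,\dots,d. \]

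Finally, I would deduce from these vanishings that $f$ does not involve the variables $z_{r+1},\dots,z_d$, which gives exactly $f \in k[z_1,\dots,z_r] = k[V]$. Writing $f = \sum_\alpha c_\alpha z^\alpha$ with $|\alpha|=m$, the equation $\partial f/\partial z_j = 0$ says $\alpha_j c_\alpha = 0$ for every multi-index $\alpha$. The characteristic hypothesis is used precisely here: if $k$ has positive characteristic $p$, then $m! \not\equiv 0 \pmod{p}$ forces $m < p$, so any exponent $\alpha_j \leq m$ is either $0$ or a unit in $k$, and hence $c_\alpha = 0$ whenever $\alpha_j > 0$. Thus no monomial of $f$ involves $z_j$ for $j>r$, completing the argument. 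The characteristic assumption is the only subtle point; the rest is linear algebra and a change of coordinates.
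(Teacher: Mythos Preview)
Your proof is correct and follows essentially the same approach as the paper's: both consider the linear map $S_1 \to S_{m-1}^*$, $\ell \mapsto \ell\cdot f$, change variables so that its $(d-r)$-dimensional kernel is spanned by the last $d-r$ coordinate vectors, and conclude that $f$ involves only the first $r$ dual variables. Your write-up is considerably more detailed than the paper's three-sentence version, and in particular you make explicit the role of the hypothesis on $m!$ (needed so that $\partial f/\partial z_j = 0$ genuinely forces $z_j$ to be absent from $f$), which the paper leaves implicit.
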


\begin{proof}
There exists a linear map from $S_1 \to S_{m-1}^*$ which sends $x_i\mapsto \frac{\partial f}{\partial y_i}$.  After a change of variables, we can assume that $x_{r+1}, \dots, x_{d}$ annihilate $f$. Thus, any term of $f$ contains only the variables $y_1, \dots, y_{r}$.
\end{proof}

Throughout this section, $N$ will denote $\dim_k S_2 = {d+1 \choose 2}$, the
dimension of the vector space of quadrics.


\begin{prop}\label{prop:stdgraded-grassmannian}
The standard graded Hilbert scheme for Hilbert function  $(1,d,e)$ is isomorphic to the Grassmannian $\Gr(N - e,S_2)$, and it is thus irreducible of dimension $(N-e)N$.
\end{prop}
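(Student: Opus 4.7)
The plan is to exhibit mutually inverse morphisms between the two schemes by combining the functorial characterization of the multigraded Hilbert scheme in Definition~\ref{defn:multigraded} with the universal property of the Grassmannian. First, observe that for any $k$-point $I \in \mathcal{H}^d_{(1,d,e)}$, the Hilbert function $(1,d,e)$ together with $h_1 = d = \dim_k S_1$ forces $I_1 = 0$, $\dim_k I_2 = N - e$, and $I_j = S_j$ for every $j \geq 3$. Thus $I$ is completely determined by the $(N-e)$-dimensional subspace $I_2 \subset S_2$, and conversely any such subspace $V$ produces a valid homogeneous ideal $I(V) = V \oplus S_3 \oplus S_4 \oplus \cdots$ because the containments $S_1 \cdot V \subseteq S_3$ and $S_1 \cdot S_j \subseteq S_{j+1}$ needed to make $I(V)$ an ideal hold automatically.

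To globalize in one direction, send a $T$-valued point $\mathcal{I} \subset S \otimes_k T$ of $\mathcal{H}^d_{(1,d,e)}$ to its degree-$2$ component $\mathcal{I}_2 \subset S_2 \otimes_k T$. This is a locally free $T$-submodule of rank $N - e$, since the quotient $(S_2 \otimes_k T)/\mathcal{I}_2$ is locally free of rank $e$ by hypothesis and $S_2 \otimes_k T$ is globally free of rank $N$, so the sequence splits locally. The assignment $\mathcal{I} \mapsto \mathcal{I}_2$ is natural in $T$, and by the universal property of the Grassmannian it defines a morphism $\mathcal{H}^d_{(1,d,e)} \to \Gr(N-e, S_2)$.

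For the inverse, starting from the tautological subbundle $\mathcal{V} \subset S_2 \otimes \mathcal{O}_{\Gr(N-e,S_2)}$, I would define a graded subsheaf $\mathcal{I} \subset S \otimes \mathcal{O}_{\Gr(N-e,S_2)}$ by $\mathcal{I}_0 = \mathcal{I}_1 = 0$, $\mathcal{I}_2 = \mathcal{V}$, and $\mathcal{I}_j = S_j \otimes \mathcal{O}_{\Gr(N-e,S_2)}$ for $j \geq 3$. The containments required for $\mathcal{I}$ to be a sheaf of graded ideals, together with the local freeness of each graded component of $(S \otimes \mathcal{O})/\mathcal{I}$ of rank $h_j$, are immediate from the construction, so by Definition~\ref{defn:multigraded} this produces the desired morphism $\Gr(N-e, S_2) \to \mathcal{H}^d_{(1,d,e)}$.

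It is then formal to check that these two morphisms are mutually inverse on $T$-points for every $k$-algebra $T$, so Yoneda yields the claimed isomorphism of schemes, with irreducibility and the dimension assertion inherited from the Grassmannian. The argument presents no substantive obstacle; the only care needed is to verify that the ideal and Hilbert function conditions behave well in families, but this reduces to the pointwise statements after trivializing $\mathcal{V}$ locally.
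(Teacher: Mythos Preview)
Your proof is correct and follows essentially the same approach as the paper: both construct mutually inverse natural transformations between the functors of points---sending an ideal to its degree-$2$ piece in one direction, and a subspace $V\subset S_2$ to the ideal $V\oplus\bigoplus_{j\ge 3}S_j$ in the other---and conclude via Yoneda. The only cosmetic difference is that you phrase the inverse map using the tautological subbundle on the Grassmannian, while the paper phrases it directly on $T$-points, but these are equivalent formulations of the same construction.
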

\begin{proof}
We build the isomorphism via the functors of points of these schemes.  For a $k$-algebra $T$ let $\phi(T)\colon\mathcal{H}_h(T)\rightarrow \Gr(N-e,S_2)(T)$ be the morphism of sets which maps a homogeneous ideal $I\subset T\otimes_k S$ to $I_2$. Let $\psi(T)\colon\Gr(N-e,S_2)(T)\rightarrow \mathcal{H}_h(T)$ be the map which sends a $k$-submodule $L$ of $T\otimes S_2$ to $L\oplus\bigoplus_{j\geq 3}(T\otimes_k S_j)$, which is an ideal of $T\otimes S$. The natural transformations $\phi$ and $\psi$ are inverses of one another and the isomorphism follows from Yoneda's Lemma.
\end{proof}


\begin{prop} \label{prop:multigrad-1d1s}
Let $\vec h = (1,d,1,\ldots,1)$ and let $m \geq 3$ the largest index such that
$h_m$ is non-zero.
Then the standard graded Hilbert scheme for
$\vec h$ is irreducible of dimension $d-1$.
\end{prop}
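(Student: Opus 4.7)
The plan is to exhibit an isomorphism $\mathcal H^d_{\vec h} \cong \mathbb P^{d-1} = \mathbb P(S_1^*)$ via Macaulay duality, following the same template as Proposition~\ref{prop:stdgraded-grassmannian}.

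First, I would classify the $k$-points. For $I \in \mathcal H^d_{\vec h}(k)$, duality yields a graded subspace $I^\perp \subset S^*$ closed under partial differentiation with $\dim I^\perp_j = h_j$. The top piece $I^\perp_m$ is one-dimensional, say spanned by $f$; since the partials of $f$ sit inside the one-dimensional space $I^\perp_{m-1}$, the preceding lemma forces $f = \ell^m$ for some linear form $\ell \in S_1^*$. Iterating downward gives $I^\perp_j = \langle \ell^j \rangle$ for $2 \le j \le m$, while $I^\perp_1 = S_1^*$ is dictated by the Hilbert function. Hence $I$ is uniquely determined by the line $[\ell] \in \mathbb P(S_1^*)$.

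To promote this bijection to a scheme isomorphism, I would give mutually inverse natural transformations on functors of points. Given a $T$-point $L \subset T \otimes S_1^*$ of $\mathbb P^{d-1}$, assign the ideal $J_L \subset T\otimes S$ whose perpendicular in each degree $2 \le j \le m$ is the image of $L^{\otimes j}$ under the Veronese/symmetrization $T \otimes S_1^{*\otimes j} \to T \otimes S_j^*$ (and equals $T \otimes S_1^*$ in degree $1$, $T$ in degree $0$, and $0$ in higher degrees); flatness and the correct Hilbert function follow since each perpendicular is a line subbundle. Conversely, from $I \in \mathcal H^d_{\vec h}(T)$ the rank-one quotient $(T\otimes S/I)_m$ dualizes to a line subbundle of $T \otimes S_m^*$, which, after using the differentiation structure of $I^\perp$ to propagate down to degree $1$, yields the sought line subbundle $L \subset T \otimes S_1^*$. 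Checking these two transformations are mutually inverse (on fibers they agree by the $k$-point classification) and applying Yoneda gives $\mathcal H^d_{\vec h} \cong \mathbb P^{d-1}$, which is irreducible of dimension $d-1$.

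The main obstacle is the reverse direction of Step 2: producing a functorial line subbundle of $T \otimes S_1^*$ from an abstract $T$-flat family with Hilbert function $\vec h$, i.e.\ a family version of the assertion $f = \ell^m$. This depends on the invertibility of $m!$ from the preceding lemma; in the range $\sum h_i \le 8$ with $d \ge 3$ one has $m \le 5$, and the hypothesis $\operatorname{char}(k) \neq 2,3$ covers the cases in which the proposition will be applied. The remaining verifications (flatness of $T \otimes S/J_L$ in each graded degree, functoriality, and the mutual inverseness of the two constructions) are routine manipulations of functors of points.
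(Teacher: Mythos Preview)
Your approach and the paper's share the same core idea: parametrize $\mathcal H^d_{\vec h}$ by $\Gr(1,S_1^*)\cong\mathbb P^{d-1}$ via $\ell\mapsto I_\ell$ where $I_\ell^\perp$ is spanned by the powers of $\ell$. The paper is considerably terser: it simply writes down the forward map (sending $\ell$ to the ideal generated by $(\ell^2)^\perp$ together with $\mathfrak m^{m+1}$), observes that this has the right Hilbert function, and asserts surjectivity. Irreducibility is then immediate, and the dimension follows since the parametrization is visibly injective on $k$-points.

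Your version aims higher, attempting a scheme-theoretic isomorphism $\mathcal H^d_{\vec h}\cong\mathbb P^{d-1}$ via mutually inverse natural transformations. This is a stronger statement than the proposition requires, and there is a soft spot in the argument as written: verifying that the two transformations are inverse ``on fibers'' (i.e.\ on $k$-points) does not by itself show they are inverse as natural transformations, since you do not know a priori that $\mathcal H^d_{\vec h}$ is reduced. Concretely, the composition $\mathcal H^d_{\vec h}\to\mathbb P^{d-1}\to\mathcal H^d_{\vec h}$ could differ from the identity by a nilpotent perturbation. What you \emph{do} get cleanly is that the composition $\mathbb P^{d-1}\to\mathcal H^d_{\vec h}\to\mathbb P^{d-1}$ is the identity (since $\mathbb P^{d-1}$ is reduced and separated), so the forward map is a monomorphism, and it is surjective on points; this already yields irreducibility and dimension $d-1$, which is all the proposition asserts. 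If you want the full isomorphism you would need to check the backward--forward composition directly on $T$-points, i.e.\ prove the family version of ``$(I^\perp)_j = \langle\ell^j\rangle$'' rather than invoke the fiberwise statement.
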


\begin{proof}
We claim that the scheme $\mathcal H_{\vec h}^d$ is parametrized by
$\Gr(1, S_1^*)$ by sending a vector space generated by $\ell \in S_1^*$ to the
ideal generated by the quadrics orthogonal to $\ell^2$ and all degree $m+1$
polynomials.  This ideal has the right Hilbert function and the parametrization is clearly surjective.
\end{proof}


\begin{theorem}\label{thm:multigrad-1d21}
If $d$ is at least $3$, the standard graded Hilbert scheme for Hilbert function  $(1,d,2,1)$ is reducible and consists of the following two components:
\begin{enumerate}
\item The homogeneous ideals orthogonal to $\ell^3$, $\ell^2$, and $q$ where $\ell$ is a linear form and $q$ is a quadric linearly independent of $\ell^2$. We denote this component by $\mathcal Q_d$, and $\dim(\mathcal Q_d) = (d^2 + 3d - 6)/2$. 
\item The closure of the homogeneous ideals orthogonal to a cubic $c$ and its partial derivatives, where the degree~$1$ derivatives of $c$ have rank $2$. We denote this component by $\mathcal P_d$, and $\dim(\mathcal P_d) = 2d-1$.
\end{enumerate}
\end{theorem}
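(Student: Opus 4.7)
The plan is to use the Macaulay inverse system correspondence from Section~\ref{sec:Background}: a homogeneous ideal $I \subset S$ with Hilbert function $(1,d,2,1)$ corresponds to a graded subspace $I^\perp \subset S^*$ closed under partial differentiation, with $\dim I^\perp_j = h_j$. Write $I^\perp_3 = \langle c\rangle$ for a nonzero cubic $c$. Since $I^\perp$ is closed under differentiation, the partials $\partial c/\partial y_i$ must lie in $I^\perp_2$. Let $r = \dim\operatorname{span}\{\partial c/\partial y_i\}$; then $r\in\{1,2\}$, since the span is nonzero ($c\neq 0$) and contained in the $2$-dimensional space $I^\perp_2$. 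The proof will split along this dichotomy.

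If $r=2$, the partials fill $I^\perp_2$, so $I^\perp$ is determined by $c$ alone, and by the preceding lemma $c \in \Sym^3 L$ for a unique $2$-dimensional $L \subset S_1^*$. Letting $\mathcal{L}$ be the tautological bundle on $\Gr(2,S_1^*)$, the parameters $(L,[c])$ form the projective bundle $\pp(\Sym^3\mathcal{L})\to\Gr(2,S_1^*)$, irreducible of dimension $3 + 2(d-2) = 2d-1$; its image in $\mathcal H^d_{(1,d,2,1)}$, closed up, will be $\mathcal{P}_d$. If $r=1$, the lemma forces $c = \ell^3$ (up to scalar) for some $\ell \in S_1^*$, so $\operatorname{span}\{\partial c/\partial y_i\}=\langle\ell^2\rangle$ and hence $I^\perp_2 = \langle\ell^2, q\rangle$ for some $q \in S_2^*\setminus\langle\ell^2\rangle$. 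For any such $(\ell, q)$ the subspace $k\oplus S_1^*\oplus\langle\ell^2,q\rangle\oplus\langle\ell^3\rangle$ is automatically closed under differentiation and has the correct Hilbert function. The corresponding parameter space, a projective bundle over $\pp(S_1^*)$ with fibers $\pp(S_2^*/\langle\ell^2\rangle)$, has dimension $(d-1)+(N-2) = (d^2+3d-6)/2$, and its image closure will be $\mathcal{Q}_d$.

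Both $\mathcal{P}_d$ and $\mathcal{Q}_d$ are irreducible as closures of images of irreducible parameter spaces, and by the case analysis $\mathcal H^d_{(1,d,2,1)} = \mathcal{P}_d\cup\mathcal{Q}_d$. To show they are two distinct components, I will observe that every point of $\mathcal{Q}_d$ has $r=1$: the condition $r\le 1$ is closed by lower semi-continuity of matrix rank, and it holds on the defining family (where $c=\ell^3$); hence $\mathcal{P}_d \not\subset \mathcal{Q}_d$ since a generic point of $\mathcal{P}_d$ has $r=2$. Conversely, the dimension comparison $(d^2+3d-6)/2 - (2d-1) = (d^2-d-4)/2 > 0$ for $d\ge 3$ rules out $\mathcal{Q}_d \subset \mathcal{P}_d$. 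The main obstacle will be the $r=2$ case, namely, checking that the morphism from the parameter space $\pp(\Sym^3\mathcal{L})$ to the Hilbert scheme is well-defined on the open locus where the cubic has $2$-dimensional partials and that the subspace $L$ is correctly recovered from $c$ via the lemma (which uses that $3!$ is invertible in $k$).
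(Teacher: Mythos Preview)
Your proof is correct and follows the same parametrization-by-inverse-systems strategy as the paper for constructing $\mathcal P_d$ and $\mathcal Q_d$ and computing their dimensions. The case split on $r=\dim\operatorname{span}\{\partial c/\partial y_i\}\in\{1,2\}$ and the dimension counts match the paper's exactly.

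The one substantive difference is in how you separate the two components. To show $\mathcal P_d\not\subset\mathcal Q_d$, the paper picks the ideal $I$ orthogonal to $x_1x_2^2$ and computes the full degree-$0$ tangent space $\Hom_S(I,S/I)_0$ explicitly, obtaining dimension exactly $2d-1$; since $\dim\mathcal Q_d=(d^2+3d-6)/2>2d-1$ for $d\ge 3$, the point $I$ cannot lie on $\mathcal Q_d$. You instead observe that the condition $r\le 1$ is closed (as a rank condition on the bundle map $S_1\otimes\mathcal I_3^\perp\to\mathcal I_2^\perp$), so $\mathcal Q_d$ is precisely the closed $r=1$ locus and any $r=2$ point of $\mathcal P_d$ already witnesses $\mathcal P_d\not\subset\mathcal Q_d$. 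Your argument is shorter and avoids the matrix computation; the paper's tangent-space calculation buys slightly more, namely that $I$ is a \emph{smooth} point of $\mathcal H^d_{(1,d,2,1)}$ (tangent dimension equals $\dim\mathcal P_d$), though that extra information is not used elsewhere in the paper.

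One small point worth making explicit in your write-up: to conclude $\dim\mathcal P_d=2d-1$ you need the map from the open $r=2$ locus of $\pp(\Sym^3\mathcal L)$ to the Hilbert scheme to be injective on points. This follows because $I_3^\perp$ recovers $[c]$, and if $c\in\Sym^3 L\cap\Sym^3 L'$ with $L\ne L'$ then $c\in\Sym^3(L\cap L')$ would force $r\le 1$. You allude to this at the end but it deserves a sentence in the body of the argument.
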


\begin{proof}
We compute the dimension of the first component.  It is parametrized by the $1$-dimensional subspace of $S_1^*$ generated by $\ell$ and a $2$-dimensional subspace of $S_2^*$ which contains $\ell^2$. These have dimensions $d-1$ and $N-1-1$ respectively, for a total of $(d^2+3d-6)/2$.

An open subset of the second component, $\mathcal P_d$, is parametrized by a $2$-dimensional subspace $V$ of~$S_1^*$ and a cubic $c \in \Sym_3(V)$ which is not a perfect cube.  The parametrization is by taking the ideal whose components of degrees~$3$ and~$2$ are orthogonal to~$c$ and to its derivatives respectively.  The space of derivatives is $2$-dimensional by our construction of~$c$. The dimension of $\mathcal P_d$ is $3 + 2(d-2) = 2d-1$.

We claim that any homogeneous ideal with Hilbert function $(1,d,2,1)$ lies in one of these two components. Any such ideal is orthogonal to a cubic $c$, and the derivatives of $c$ are at most $2$-dimensional. If the derivatives are $1$-dimensional, then $c$ must be a perfect cube, so the ideal is in $\mathcal Q_d$.  Otherwise, the ideal is in $\mathcal P_d$.

Finally, we will show that $\mathcal P_d$ has a point that does not lie on $\mathcal Q_d$.  Let $I$ be the ideal orthogonal to $x_1x_2^2$, and its partial derivatives, $2x_1x_2$, $x_2^2$. Then $I$ is generated by $x_2^3$ and all degree $2$ monomials other than $x_2^2$ and $x_1x_2$.  We will study the degree $0$ homomorphisms $\phi\colon I\rightarrow S/I$ as these correspond to the tangent space of $\mathcal  H_{\vec h}^d$ at $I$.
For any quadric generator $q$, we can write $\phi(q) = a_q x_2^2 + b_q x_1 x_2$. Note that $x_1 \phi(q) = a_q x_1x_2^2$ and $x_2 \phi(q) = b_q x_1x_2^2$.  For any $i, j > 2$, $\phi$ must satisfy the conditions:
\begin{align*}
x_1 \phi(x_i x_j) &= x_j \phi(x_i x_1) = 0 \\
x_2 \phi(x_i x_j) &= x_j \phi(x_i x_2) = 0 \\
x_1 \phi(x_1 x_i) &= x_i \phi(x_1^2) = 0 \\
x_2 \phi(x_1 x_i) &= x_1 \phi(x_2 x_i)
\end{align*}
In matrix form, we see that $\phi$ must be in the following form:
\begin{equation*}
\bordermatrix{
& x_1^2  & x_1 x_i & x_2 x_i & x_i x_j & x_2^3 \cr
x_2^2 & * & c_i & * & 0 & 0 \cr
x_1x_2 & * & 0 & c_i & 0 & 0 \cr
x_1x_2^2 & 0 & 0 & 0 & 0 & *
}
\end{equation*}
where $i$ and $j$ range over all integers greater than $2$.  Thus there are at most $2(d-2) + 3= 2d-1$ tangent directions, but since $\mathcal P_d$ has dimension $2d-1$, there are exactly $2d-1$ tangent directions. On the other hand, $\mathcal Q_d$ has dimension $(d^2+3d-6)/2$ which is greater than $2d-1$ for $d$ at most $3$, so $I$ cannot belong to $\mathcal Q_d$ and thus $\mathcal P_d$ is a component.
\end{proof}


\begin{prop} \label{prop:stdgraded-1d22}
The standard graded Hilbert scheme for Hilbert function $(1,d,2,2)$ is irreducible of dimension  $2d - 2$. 
\end{prop}

\begin{proof}
The Hilbert scheme is parametrized by a $2$-dimensional subspace $L$ of $S_1^*$ and a subspace of $S_2$ of dimension $N-2$, and containing the $(N-3)$-dimensional subspace orthogonal to the square of $L$.
The parametrization is by sending the subspace of $S_2$ to the ideal generated by that subspace, together with all degree~$4$ polynomials.  
The dimension of this parametrization is $2(d-2) + 2 = 2d-2$.
\end{proof}


\begin{prop} \label{prop:stdgraded-1331}
The standard graded Hilbert scheme for Hilbert function $(1,3,3,1)$ is irreducible of dimension $9$.
\end{prop}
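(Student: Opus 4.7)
My plan is to parametrize the scheme by an open subset of $\mathbb{P}(S_3^*) \cong \mathbb{P}^9$ via Macaulay duality. The key observation is that any ideal $I$ with Hilbert function $(1,3,3,1)$ is Gorenstein Artin with socle degree $3$, since the Hilbert function is symmetric and $h_3 = 1$; by apolarity, $I$ is then recovered as the annihilator $\mathrm{Ann}(c) = \{D \in S : D \cdot c = 0\}$ of any nonzero cubic $c$ spanning $I_3^\perp \subset S_3^*$, with $c$ determined up to scalar, and conversely every cubic yields such an ideal. This gives a set-theoretic bijection between the closed points of $\mathcal H^3_{(1,3,3,1)}$ and an open subset $U$ of $\mathbb{P}(S_3^*)$.

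I would identify $U$ via two open conditions on $[c]$ needed to ensure the Hilbert function is exactly $(1,3,3,1)$: (i) the three first partial derivatives $\partial c/\partial y_i \in S_2^*$ are linearly independent (forcing $h_2 = 3$); and (ii) the second partial derivatives of $c$ span all of $S_1^*$ (forcing $h_1 = 3$). Both are non-empty Zariski-open conditions on $\mathbb{P}^9$, as witnessed by $c = y_1^3 + y_2^3 + y_3^3$ when $\ch(k) \neq 2, 3$. Thus $U$ is a non-empty open subset of $\mathbb{P}^9$ and hence irreducible of dimension $9$.

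To upgrade the bijection to a scheme-theoretic isomorphism, I would follow the functor-of-points argument modeled on the proof of Proposition~\ref{prop:stdgraded-grassmannian}. For a $k$-algebra $T$, the natural transformation would send a $T$-point of $U$ (encoded as a locally free rank-$1$ quotient $S_3^* \otimes T \twoheadrightarrow L$ satisfying the analogues of (i) and (ii) fiberwise) to the ideal of $S \otimes T$ whose degree-$i$ component is the apolar annihilator of $L$, with inverse $I \mapsto I_3^\perp$. The main obstacle will be the flatness/rank check: verifying that these graded apolar annihilators are locally free $T$-modules of the asserted ranks, which is exactly where conditions (i) and (ii) enter. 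Once that step is completed, Yoneda will yield $\mathcal H^3_{(1,3,3,1)} \cong U$ as schemes, and the proposition follows.
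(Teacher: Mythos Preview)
Your central claim---that every homogeneous ideal with Hilbert function $(1,3,3,1)$ is Gorenstein because the Hilbert function is symmetric with $h_3=1$---is false, and this breaks the argument. A symmetric Hilbert function is necessary but not sufficient for an Artinian graded algebra to be Gorenstein. Concretely, take $I=\lideal x_2^2,\, x_2x_3,\, x_3^2,\, x_1^3,\, x_1^2x_3\rideal + \mathfrak m^4$ in $k[x_1,x_2,x_3]$. This has Hilbert function $(1,3,3,1)$, and the orthogonal cubic is $c=y_1^2y_2$, but $x_1x_3$ lies in the socle of $S/I$ in degree~$2$, so the socle is $2$-dimensional and $S/I$ is not Gorenstein. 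Equivalently, $I \ne \mathrm{Ann}(c)$: the partials of $c$ span only a $2$-dimensional subspace of $S_2^*$, so $\mathrm{Ann}(c)_2$ is $4$-dimensional and there is a $\mathbb P^3$ of choices of $I_2$ inside it, each giving a distinct (non-Gorenstein) point of $\mathcal H^3_{(1,3,3,1)}$ with the same $I_3$. Your map $I\mapsto I_3^\perp$ is therefore not injective on closed points, and your open set $U\subset \mathbb P(S_3^*)$ parametrizes only the Gorenstein locus.

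What remains to be shown is that this Gorenstein locus is dense, i.e.\ that the non-Gorenstein ideals do not constitute a separate component. The paper achieves this in two steps that your argument lacks: first, it embeds $\mathcal H^3_{(1,3,3,1)}$ in the smooth $18$-dimensional variety $\Gr(3,S_2)\times\Gr(9,S_3)$ and observes it is cut out by $9$ equations, so every component has dimension at least $9$; second, it stratifies by the dimension of the space of partials of $c$ and computes that the loci where this dimension is $2$ or $1$ are each $8$-dimensional, hence too small to be components. Your approach correctly identifies the $9$-dimensional Gorenstein stratum but omits both the lower bound on component dimension and the dimension count on the complement, so as written it does not rule out extra components.
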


\begin{proof}
This Hilbert scheme embeds as a closed subscheme of the smooth $18$-dimensional variety $\Gr(3, S_2) \times \Gr(9, S_3)$ by mapping an ideal to its degree $2$ and $3$ graded components. Furthermore, $\mathcal H_{\vec h}^d$ is defined by $9 = 3 \cdot 3$ equations, corresponding to the restrictions that the products of each of the $3$~variables and each of the $3$~quadrics in $I_2$ are in~$I_3$. In particular, the dimension of each irreducible component is at least $9$.

Now we will look at the projection of $\mathcal H_{\vec h}^d$ onto the Grassmannian $\Gr(9, S_3)$, which is isomorphic to $\Gr(1, S_3^*)$. The orthogonal cubic in $S_3^*$ can be classified according to the vector space dimension of its derivatives. For a generic cubic, its three derivatives will be linearly independent and therefore the cubic will completely determine the orthogonal space. Thus, the projection from $\mathcal H_{\vec h}^d$ is a bijection over this open set, so the preimage is $9$-dimensional. In the case where the derivatives of the cubic are $2$-dimensional, we have that, after a change of coordinates, the cubic is written in terms of two variables. Thus, the parameter space of the cubic consists of a $2$-dimensional choice of a subspace of~$S_1$ and then a $3$-dimensional choice of a cubic written in terms of this subspace. The fiber over any fixed cubic is isomorphic to the Grassmannian of $3$-dimensional subspaces of the $4$-dimensional subspace of $S_2$ orthogonal to the derivatives of the cubic. The dimension of the locus in $\mathcal H_{\vec h}^d$ is therefore $2+3+3=8$. By a similar logic, the locus where the cubic has a $1$-dimensional space of derivatives is $2+2\cdot 3=8$. Therefore, $\mathcal H_{\vec h}^d$ is the disjoint union of three irreducible sets, of dimensions $9$, $8$, and $8$.  We conclude that $\mathcal H^d_{\vec h}$ is an irreducible complete intersection of dimension $9$.
\end{proof}


\begin{prop} \label{prop:stdgraded-1de11}
The standard graded Hilbert scheme for Hilbert function $(1,d,e, 1,1)$
is irreducible of dimension $d-1 + (N-e)(e-1)$.
\end{prop}

\begin{proof}
This Hilbert scheme is parametrized by a $1$-dimensional subspace $L$ of $S_1^*$, together with an $e$-dimensional subspace $V$ of $S_2^*$ which contains $\Sym_2(L)$. The parametrization is by mapping $(L, V)$ to the ideal whose summands of degrees $2$, $3$, and $4$ are orthogonal to $V$, $L^3$, and $L^4$, respectively.  Note that this has the desired dimension $(d-1) + ((N-1)-(e-1))(e-1)$. 
%
\end{proof}

\begin{thm} \label{thm:summary-stdgraded}
With the exception of Hilbert function $(1,3,2,1)$ and $(1,4,2,1)$, the standard graded Hilbert schemes with $\sum h_i\leq 8$ are irreducible.
\end{thm}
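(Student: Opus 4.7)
The plan is to reduce to a finite enumeration and check each case against the propositions proved earlier in this section. First, Proposition~\ref{prop:stdgraded-grassmannian-bundle} shows that $\mathcal H^d_{\vec h}$ is a Grassmannian bundle over $\mathcal H^{h_1}_{\vec h}$, so irreducibility when $h_1 < d$ follows from the case $h_1 = d$; moreover, by \cite{evain} the case $d \leq 2$ is already handled, so we may assume $d \geq 3$. Combined with $\sum h_i \leq 8$ and $h_0 = 1$, this forces $d \in \{3,4,5,6,7\}$.

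Next, for each such $d$, I enumerate all Hilbert functions $(1,d,h_2,h_3,\ldots)$ compatible with $\sum h_i \leq 8$ and Macaulay's growth bound $h_{i+1}\leq h_i^{\langle i\rangle}$. The resulting list splits into three generic families and a handful of sporadic cases. The trivial functions $\vec h=(1,d)$ give a single-point Hilbert scheme; the two-step functions $(1,d,e)$ are handled by Proposition~\ref{prop:stdgraded-grassmannian}; and the narrow-tail functions $(1,d,1,\ldots,1)$ are handled by Proposition~\ref{prop:multigrad-1d1s}. The remaining sporadic Hilbert functions in the enumeration are $(1,3,2,2)$, $(1,3,3,1)$, and $(1,3,2,1,1)$, covered respectively by Propositions~\ref{prop:stdgraded-1d22}, \ref{prop:stdgraded-1331}, and~\ref{prop:stdgraded-1de11} (with $e=2$), together with the two cases $(1,3,2,1)$ and $(1,4,2,1)$ which are excluded from the statement and shown reducible in Theorem~\ref{thm:multigrad-1d21}.

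The argument is essentially bookkeeping; the main obstacle is ensuring that the enumeration is exhaustive. I would verify this by stratifying on $h_2$. If $h_2=0$ then $\vec h=(1,d)$; if $h_2=1$ then Macaulay forces $h_i\leq 1$ for all $i\geq 2$, so $\vec h=(1,d,1,\ldots,1)$; if $h_2=2$ then $h_3\leq 2$, and the bound $\sum h_i\leq 8$ pins down the few remaining options, namely $(1,d,2)$ for $d\in\{3,4,5\}$, $(1,3,2,1)$, $(1,4,2,1)$, $(1,3,2,2)$, and $(1,3,2,1,1)$; if $h_2\geq 3$, then $d\in\{3,4\}$ and only $(1,3,3)$, $(1,3,4)$, $(1,4,3)$, and $(1,3,3,1)$ satisfy $\sum h_i\leq 8$. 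With the enumeration verified, the theorem follows immediately from the propositions cited above.
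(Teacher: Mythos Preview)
Your proof is correct and takes the same approach as the paper: reduce to $h_1=d$ via Proposition~\ref{prop:stdgraded-grassmannian-bundle}, invoke \cite{evain} for $d\le 2$, and enumerate the finitely many Hilbert functions for $d\ge 3$, matching each to the appropriate proposition. The paper's own proof is terser---it simply points to Table~\ref{tbl:thm-1}---whereas you spell out the case analysis by stratifying on $h_2$ and checking Macaulay's bound; the content is identical.
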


\begin{proof}
The cases when $d=2$ follow from~\cite[Theorem~1]{evain}. The cases when $d$ is at least $3$ are summarized in Table~\ref{tbl:thm-1}
\end{proof}

\ifx\thepage\undefined\def\jobname{hilb}\fi
\cvs  $Date: 2008/06/22 10:30:12 $ $Author: dustin $
\section{Smoothable 0-schemes of degree at most 8}\label{sec:smoothable}\cvsversion

In this section $k$ will denote an algebraically closed field of characteristic not~$2$ or~$3$.

Recall that a point $I$ in $H^d_n$ is \defi{smoothable} if $I$ belongs to the smoothable component $R^d_n$.  In this section, we first reduce the question of smoothability to ideals $I$ in $H^d_n$ where $S/I$ is a local $k$-algebra and $I$ has embedding dimension $d$.   Then we define the schemes $H^d_{\vec h}$ which parametrize local algebras, and we use these to show that each $0$-dimensional algebra of degree at most 8 is either smoothable or is isomorphic to a homogeneous local algebra with Hilbert function $(1,4,3)$.

We use two different methods to show that a subscheme $H^d_{\vec h}$ belongs to the smoothable component.
\begin{enumerate}
	\item For each irreducible component of $H^d_{\vec h}$, consider a generic ideal $I$ from that component.  Apply suitable isomorphisms to put $I$ into a nice form.  Then show $I$ is smoothable.  Since the set of ideals isomorphic to $I$ are dense in the component and smoothable, the entire component of  $H^d_{\vec h}$ containing $I$ must belong to $R^d_n$.
	\item Within each irreducible component of $H^d_{\vec h}$, find an ideal $I$ such that $I$ is a smooth point in $H^d_n$ and $I$ belongs to $R^d_n$.  Then the whole component of $H^d_{\vec h}$ containing $I$ must belong to $R^d_n$.
\end{enumerate}

In each method we need to show that a particular ideal $I$ is smoothable.  We do this by showing $I=\initial J$ with respect to some non-negative weight vector for a smoothable ideal $J$.  The corresponding Gr\"obner deformation induces a morphism $\mathbb A^1\to R^d_n$ which maps $0$ to $I$. 

For $d=2$, we have the following:
\begin{thm}[Fogarty] \label{thm:plane}
The Hilbert scheme $H_n^2$ is smooth and irreducible.
\end{thm}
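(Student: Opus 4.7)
The plan is to prove Fogarty's theorem by a uniform tangent-space computation combined with connectedness of $H^2_n$. First, I would observe that $\dim R^2_n = 2n$: the rational map $(\mathbb{A}^2)^n \dashrightarrow R^2_n$ sending an ordered tuple of distinct points to the ideal of their union is dominant and generically $n!$-to-$1$. The heart of the argument is to show that for every $I \in H^2_n$,
\[
\dim_k T_I H^2_n \;=\; \dim_k \Hom_S(I, S/I) \;=\; 2n.
\]

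Since $S = k[x,y]$ has global dimension $2$, the artinian quotient $S/I$ is Cohen--Macaulay of codimension $2$, so the Hilbert--Burch theorem provides a free resolution
\[
0 \to S^{m-1} \to S^m \to I \to 0.
\]
Applying $\Hom_S(-, S/I)$ and taking the alternating sum of $k$-dimensions gives
\[
\dim \Hom(I, S/I) - \dim \operatorname{Ext}^1(I, S/I) \;=\; mn - (m-1)n \;=\; n.
\]
The dimension shift $\operatorname{Ext}^1(I, S/I) \cong \operatorname{Ext}^2(S/I, S/I)$, coming from $0 \to I \to S \to S/I \to 0$, combined with Grothendieck local duality at each point of $\Supp(S/I)$---where the canonical module of the $2$-dimensional regular local ring is the ring itself---identifies $\operatorname{Ext}^2(S/I, S/I)$ with $\Hom(S/I, S/I)^\vee$. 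Since $S/I$ is a cyclic $S$-module, $\Hom_S(S/I, S/I) \cong S/I$ has dimension $n$, so altogether $\dim \Hom(I, S/I) = n + n = 2n$.

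This shows $H^2_n$ is smooth of pure dimension $2n$. To conclude, I would invoke Hartshorne's connectedness theorem for Hilbert schemes (applied via a compactification to $\mathbb{P}^2$): a smooth connected scheme is irreducible, so $H^2_n$ is irreducible, and since $R^2_n$ is a closed irreducible subset of the same dimension $2n$ as the ambient scheme, $H^2_n = R^2_n$. The main subtlety is the local-duality step forcing $\operatorname{Ext}^2(S/I,S/I)$ to have dimension $n$; this is precisely where $d=2$ is essential, and its failure in higher dimensions is what permits the reducibility encoded in Theorem~\ref{thm:hd8}.
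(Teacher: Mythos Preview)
The paper does not give a proof; it simply cites Fogarty's original article and, as an alternative, the treatment in Miller--Sturmfels based on Haiman's work. Your sketch is essentially Fogarty's own argument, and the tangent-space computation via Hilbert--Burch plus local duality over the $2$-dimensional Gorenstein local rings is correct.

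One step to spell out more carefully: Hartshorne's connectedness theorem is stated for projective Hilbert schemes, so you cannot apply it to $H^2_n$ directly. The clean route is to run the identical tangent-space computation for $\Hilb^n(\mathbb P^2)$ (every length-$n$ subscheme of $\mathbb P^2$ is supported in some affine chart, so the local calculation is the same), conclude that $\Hilb^n(\mathbb P^2)$ is smooth and, by Hartshorne, connected, hence irreducible; then $H^2_n$ is irreducible as an open subscheme. The alternative proof the paper cites avoids Hartshorne's theorem entirely: once smoothness is known, one observes that every ideal Gr\"obner-degenerates to a monomial ideal and every monomial ideal is smoothable (cf.\ Proposition~\ref{prop:monomial-radical}), so every point of $H^2_n$ already lies in $R^2_n$.
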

\begin{proof}
Theorem~2.4 of~\cite{fogarty}. For a different proof, see Theorem~18.7
of~\cite{cca} which is based on Proposition~2.4 of~\cite{haiman-catalan}.
\end{proof}
Thus, we will limit our analysis to algebras with embedding dimension at
least~$3$.

\subsection{Reduction to local Artin \texorpdfstring{$k$}{k}-algebras}

By the following lemma, it suffices to consider only ideals
supported at a single point:
\begin{lemma} \label{lem:primary}
Let $I$ be an ideal in $A = k[x_1, \ldots, x_d]$ of colength $n$ with a decomposition $I = I_1 \cap \cdots \cap I_m$ where the $I_i$ are pairwise coprime. Then $I \in H_n^d$ is smoothable if each $I_i \in H_{n_i}^d$ is smoothable, where $n_i$ is the colength of $I_i$.
\end{lemma}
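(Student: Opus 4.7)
The plan is to show that a smoothing deformation of each $I_j$ can be carried out simultaneously while keeping the supports pairwise disjoint, so that taking the disjoint union on the level of Hilbert schemes produces a smoothing of $I$.

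First I would set up a gluing morphism. The pairwise coprimality assumption implies by the Chinese Remainder Theorem that $A/I \cong \prod_j A/I_j$, so the subschemes $Z_j := V(I_j) \subset \mathbb{A}^d$ have pairwise disjoint supports. Let $U \subset \prod_{j=1}^m H^d_{n_j}$ denote the open subscheme parametrizing tuples $(J_1, \ldots, J_m)$ whose supports in $\mathbb{A}^d$ are pairwise disjoint (this is open because the pairwise intersections of the supports of the universal subschemes form a closed condition). On $U$ there is a natural ``disjoint union'' morphism
\[
\mu \colon U \to H^d_n, \qquad (J_1, \ldots, J_m) \mapsto J_1 \cap \cdots \cap J_m,
\]
since for a flat family with fiberwise disjoint supports, the intersection of the ideal sheaves is flat of rank $n = \sum_j n_j$. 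By hypothesis $(I_1, \ldots, I_m) \in U$ and $\mu(I_1, \ldots, I_m) = I$.

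Next I would use the smoothability of each $I_j$. Let $U_j^\circ \subset R^d_{n_j}$ be the dense open subset of reduced subschemes of $n_j$ distinct points. The product $\prod_j U_j^\circ$ is dense in the irreducible scheme $\prod_j R^d_{n_j}$, and $\prod_j U_j^\circ \cap U$ is nonempty (pick $n$ distinct points in $\mathbb{A}^d$ and partition them into groups of sizes $n_1, \ldots, n_m$), hence it is a dense open subset of $\prod_j R^d_{n_j} \cap U$. Under $\mu$, a tuple of reduced subschemes with disjoint supports maps to the reduced union of $n$ distinct points, which lies in the locus of $n$ distinct points in $H^d_n$ and therefore in $R^d_n$.

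Finally, since $R^d_n$ is closed in $H^d_n$ and $\mu$ is a morphism, the preimage $\mu^{-1}(R^d_n) \cap U$ is a closed subset of $U$ containing the dense subset $\prod_j U_j^\circ \cap U$ of $\prod_j R^d_{n_j} \cap U$; hence it contains all of $\prod_j R^d_{n_j} \cap U$. In particular $I = \mu(I_1, \ldots, I_m)$ lies in $R^d_n$, proving smoothability.

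The main routine step requiring care is the first one: verifying that $\mu$ is a genuine morphism of Hilbert functors rather than just a map on closed points. This is where the fiberwise disjointness of supports is essential, since it guarantees that the intersection of the universal ideals stays flat of the expected rank in families. I do not expect any serious conceptual difficulty here; the geometric content of the lemma is the natural statement that clusters of points with disjoint supports can be deformed independently of one another.
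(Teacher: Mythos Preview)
Your proposal is correct and follows essentially the same approach as the paper: construct the ``disjoint union'' morphism $\mu$ from the open locus $U \subset \prod_j H^d_{n_j}$ of tuples with pairwise disjoint (equivalently, coprime) supports to $H^d_n$, observe that it carries tuples of radical ideals to radical ideals, and conclude by a density/closure argument. The paper reduces to $m=2$ and defines $U$ concretely as the complement of the support of the cokernel of $\mathcal I_1 + \mathcal I_2$, while you handle general $m$ and spell out the closure argument more explicitly, but these are cosmetic differences.
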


\begin{proof}
It suffices to prove this in the case $m=2$. We will construct
a rational map from $P := H^d_{n_1}\times H^d_{n_2}$ to $H^d_{n}$
which sends $R^d_{n_1}\times R^d_{n_2}$ to $R^d_{n}$. Consider the universal
ideal sheaves on $H^d_{n_1}$ and~$H^d_{n_2}$, and let $\mathcal I_1$ and
$\mathcal I_2$ be their pullbacks to ideal sheaves of $\mathcal O_P[x_1, \ldots,
x_d]$. Define $U \subset P$ to be the complement of the
support of the cokernel of $\mathcal I_1 + \mathcal I_2$. On $U$, the ideal
sheaf $\mathcal I_1 \cap \mathcal I_2$ has cokernel which is locally free of
rank $n_1 + n_2$ by the Chinese remainder theorem. Thus, it defines a map
$\phi\colon U \rightarrow H^d_n$. Since $\phi$ takes radical ideals to radical
ideals, and $(I_1, I_2)$ to $I_1 \cap I_2$, the result follows.
\end{proof}

Furthermore, by the following lemma, it suffices to consider isomorphism
classes of the quotient algebra.
\begin{lemma} \label{lem:change-of-var}
Let $I$ be an ideal in $S = k[x_1,\ldots,x_d]$ and $J$ an ideal in $T = k[y_1,\ldots, y_{d'}]$.  
Suppose that $S/I$ are $T/J$ isomorphic as $k$-algebras. Then
there exists an open neighborhood $U$ of $J$ in $H_n^{d'}$ and a morphism
from $U$ to $H_n^d$ sending $J$ to $I$ and such that the pullbacks of the universal
sheaves of algebras on $H_n^d$ and $H_n^{d'}$ are isomorphic as $\mathcal
O_U$-algebras.
\end{lemma}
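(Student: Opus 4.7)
The plan is to lift the given $k$-algebra isomorphism $\phi\colon S/I \to T/J$ to a family of algebra surjections over a neighborhood of $J$. Set $\bar a_i := \phi(\bar x_i) \in T/J$ and choose polynomial lifts $\tilde a_i \in T = k[y_1,\ldots,y_{d'}]$. Let $\mathcal J_{\mathrm{univ}}$ denote the universal ideal sheaf on $H_n^{d'}$, and for any open neighborhood $U$ of $J$ set $\mathcal A_U := \mathcal O_U[y_1,\ldots,y_{d'}]/\mathcal J_{\mathrm{univ}}|_U$, which is locally free of rank $n$ as an $\mathcal O_U$-module. The polynomials $\tilde a_i$, viewed as elements of $\mathcal O_U[y_1,\ldots,y_{d'}]$, then define a morphism of $\mathcal O_U$-algebras
\[
\Phi\colon \mathcal O_U[x_1,\ldots,x_d] \longrightarrow \mathcal A_U, \qquad x_i \mapsto \tilde a_i.
\]

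The main content is to shrink $U$ so that $\Phi$ becomes surjective. At the closed point $J$, the fiber of $\Phi$ is the $k$-algebra map $S \to T/J$ sending $x_i \mapsto \bar a_i$; because $\phi$ is an isomorphism, the $\bar a_i$ generate $T/J$ as a $k$-algebra, and by finite-dimensionality of $T/J$ only finitely many monomials $\tilde a^\alpha$ in the $\tilde a_i$ are needed for their images to span $T/J$ as a $k$-vector space. Fix such a finite set; the $\mathcal O_U$-submodule $M \subset \mathcal A_U$ generated by the corresponding monomials is coherent, and its inclusion into the locally free module $\mathcal A_U$ is an isomorphism at the fiber over $J$. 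By the coherent version of Nakayama's lemma, $M = \mathcal A_U$ after shrinking $U$. Since the image of $\Phi$ contains $M$, the map $\Phi$ is surjective on this neighborhood, which we continue to call $U$.

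Let $\mathcal I_U := \ker \Phi \subset \mathcal O_U[x_1,\ldots,x_d]$. Then the induced isomorphism $\mathcal O_U[x_1,\ldots,x_d]/\mathcal I_U \cong \mathcal A_U$ shows that the quotient is locally free of rank $n$ over $\mathcal O_U$, so by the functorial description of $H_n^d$ from Definition~\ref{defn:multigraded}, $\mathcal I_U$ corresponds to a morphism $U \to H_n^d$. The pullback of the universal algebra on $H_n^d$ along this morphism is $\mathcal O_U[x_1,\ldots,x_d]/\mathcal I_U$, which by construction is isomorphic as an $\mathcal O_U$-algebra to $\mathcal A_U$, the pullback of the universal algebra on $H_n^{d'}$. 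Finally, at the point $J$ the fiber of $\mathcal I_U$ is the kernel of $S \to T/J$, $x_i \mapsto \phi(\bar x_i)$, which is precisely $I$ by the definition of $\phi$. The main obstacle is the surjectivity step; the only subtlety is that $\mathcal O_U[x_1,\ldots,x_d]$ is not itself a coherent $\mathcal O_U$-module, so one must first reduce to the coherent submodule $M$ of $\mathcal A_U$ generated by finitely many monomials before invoking Nakayama.
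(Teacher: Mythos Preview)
Your proof is correct and follows essentially the same approach as the paper: lift the isomorphism to a ring map $S\to T$, use it to define $\Phi\colon \mathcal O_U[x_1,\ldots,x_d]\to\mathcal A_U$, restrict to the open locus where $\Phi$ is surjective, and take the kernel to obtain the classifying morphism. The only cosmetic difference is that the paper fixes exactly $n$ monomials in $S$ forming a $k$-basis of $S/I$ and defines $U$ explicitly as the locus where their images generate $\mathcal A$, whereas you argue abstractly via Nakayama; these amount to the same thing.
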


\begin{proof}
Let $f\colon S \rightarrow T$ be a ring homomorphism defined by sending each
$x_i$ to some lift of its image under some fixed isomorphism $T/J \isom S/I$. We will use this to construct
a map from an open subset of $H = H_n^{d'}$ to $H_n^{d}$ which is conceptually
the map taking an ideal to its preimage under $f$.  Consider the sheaf of
algebras $\mathcal O_H[y_1, \ldots, y_{d'}]$ with the universal sheaf of
ideals $\mathcal I$, and let $\mathcal A$ denote the cokernel, which is locally
free of rank $n$. Let $M$ be any set of $n$ monomials in $S$ which form a $k$-basis
for $S/I$. Define $U$
to be the open set where the images of the elements $f(M)$ generate $\mathcal A$
(in particular this implies that $f(M)$ are distinct).
The $k$-algebra isomorphism between $S/I$ and $T/J$ guarantees that $U$ contains
at least $J$. 

Now consider the sheaf of algebras $\mathcal O_H[x_1, \ldots , x_d]$. The
ring homomorphism $f$ induces a sheaf homomorphism $\mathcal O_H[x_1, \ldots, x_d]$ to
$\mathcal O_H[y_1, \ldots, y_{d'}]$, which will also be denoted $f$. Let
$\mathcal B$ be the cokernel of the sheaf of ideals $f^{-1}(\mathcal I)$:
\begin{equation} \label{eqn:cov-diagram}
\begin{CD}
0 @>>>f^{-1}(\mathcal I) @>>> \mathcal O_H[x_1, \ldots, x_d] @>>> \mathcal B
@>>> 0 \\
@. @. @VVfV @VV\overline fV \\
0 @>>> \mathcal I @>>> \mathcal O_H[y_1, \ldots y_{d'}] @>>> \mathcal A @>>>
0
\end{CD}
\end{equation}
We claim that $\mathcal B|_U$ is free and in fact $\overline f|_U$ is an
isomorphism. Since $\mathcal A|_U$
is a free $\mathcal O_{U}$-module with generating set $f(M)$, we
can define a left inverse of~$\overline f$ by sending each element of the basis set $f(M)$ to the
corresponding element of $M$. Thus, $\overline f|_U$ is a surjection. By
the construction of $\mathcal B$, $\overline f|_U$ is an injection, so it is an
isomorphism and in fact an isomorphism of $\mathcal O_U$-algebras. From the
universal property of the Hilbert scheme, $f^{-1}(\mathcal I)|_U$ induces a
morphism $\phi\colon U \rightarrow H_n^d$.

We claim that $\phi(J) = I$.  Look at the fiber of the
diagram~\ref{eqn:cov-diagram} over the point corresponding to $J$, which remains
exact since all of the sheaves are locally free, and we see that $\phi(J) =
f^{-1}(J) = I$.  Finally, $\overline f|_U$ is exactly the isomorphism of
in the statement.
\end{proof}

\begin{cor} \label{cor:change-of-vars-smoothable}
If $I$ and $J$ are as in Lemma~\ref{lem:change-of-var} and $J$ is smoothable, then so is $I$.
\end{cor}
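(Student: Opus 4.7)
The plan is to push the smoothability of $J$ forward along the morphism $\phi\colon U \to H^d_n$ produced in Lemma~\ref{lem:change-of-var}. By definition, $R^{d'}_n$ is the closure in $H^{d'}_n$ of the open locus $W$ of ideals of $n$ distinct points in $\mathbb A^{d'}$. Since $U$ is open and $R^{d'}_n = \overline{W}$, the intersection $U \cap W$ is dense in $U \cap R^{d'}_n$.

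The key observation is that $\phi$ carries radical ideals to radical ideals. Lemma~\ref{lem:change-of-var} provides an isomorphism between the pullbacks to $U$ of the two universal quotient algebras, which specializes at any closed point $p \in U$ to a $k$-algebra isomorphism $S/\phi(p) \cong T/p$. If $p \in W$, then $T/p \cong k^n$ is reduced, hence so is $S/\phi(p)$, so $\phi(p)$ corresponds to $n$ distinct points of $\mathbb A^d$ and in particular lies in $R^d_n$.

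To conclude, observe that $\phi^{-1}(R^d_n) \subset U$ is closed (since $\phi$ is continuous and $R^d_n$ is closed in $H^d_n$) and contains $U \cap W$, which is dense in $U \cap R^{d'}_n$. Therefore $\phi^{-1}(R^d_n)$ contains $U \cap R^{d'}_n$, and in particular the point $J$. Hence $I = \phi(J) \in R^d_n$, so $I$ is smoothable. There is no real obstacle here: the content was already supplied by Lemma~\ref{lem:change-of-var}, and this corollary merely reads off that the morphism $\phi$ preserves the smoothable locus.
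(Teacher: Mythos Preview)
Your proof is correct and follows essentially the same approach as the paper's: use the isomorphism of algebra sheaves from Lemma~\ref{lem:change-of-var} to see that $\phi$ sends ideals of distinct points to ideals of distinct points, then conclude by continuity. The paper compresses your density argument into the phrase ``by continuity,'' whereas you spell out explicitly that $\phi^{-1}(R^d_n)$ is closed and contains the dense subset $U\cap W$ of $U\cap R^{d'}_n$; this is the same reasoning, just unpacked.
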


\begin{proof}
Let $\phi$ be the morphism from the lemma. The isomorphism of sheaves of algebras means that $\phi$ sends ideals of distinct points to ideals of distinct points.  By continuity, $I$ is in the smoothable component.
\end{proof}

Because of this, we will speak interchangeably of a point in the Hilbert scheme, an ideal in the polynomial ring, and its quotient algebra, and we will call a $k$-algebra \defi{smoothable} if any ideal defining it is smoothable.  From now on, we consider only ideals $I$ in $H^d_n$ which define local algebras and with embedding dimension $d$.

\subsection{The schemes \texorpdfstring{$H^d_{\vec h}$}{Hd}}\label{subsec:Hdh}
If $(A, \mathfrak m)$ is a local algebra, its Hilbert function is defined by
$h_i = \dim_k \mathfrak m^i/\mathfrak m^{i+1}$, which is equivalently the
Hilbert function of the associated graded ring of $A$.
When $A$ is both local and graded, the two notions of Hilbert function coincide.
We now define the schemes $H^d_{\vec h}$ and explore their irreducible
components for each Hilbert function $\vec h$ with $\sum h_i\leq 8$. 

For each $\vec h$ such that $\sum h_i=n$, the subscheme $H^d_{\vec h}\subset
H^d_n$ consists set-theoretically of the ideals $I$ defining a local
algebra $S/I$ with maximal ideal $(x_1,\dots, x_d)$ whose Hilbert function
equals $\vec h$.  More precisely, let $\mathcal A = \mathcal
O_{H_n^d}[x_1,\ldots,x_d]/\mathcal I$ be the universal sheaf of algebras on
$H_n^d$ and let $\mathcal M$ be the ideal $(x_1, \ldots, x_d)\mathcal A$.
The fiber at an ideal $I$ of the quotient sheaf $\mathcal A / \mathcal M^i$ is
isomorphic to $S/(I + (x_1, \ldots, x_d)^i)$. For any fixed $\vec h$, there is a
locally closed subset of $H^d_n$ consisting of those point such that the fiber
of $\mathcal A / \mathcal M^i$ has dimension $h_0 + \ldots + h_{i-1}$ for all $i
\geq 0$.
Let $H_{\vec h}^d$ be the reduced subscheme 
on this subset, and then the restriction of each $\mathcal A /
\mathcal M^{i}$ to $H_{\vec h}^d$ is locally free. Define $\mathcal B$ to be the
sheaf of graded algebras on $H_{\vec h}^d$ whose $i$th component is:
\begin{equation*}
\ker\left((\mathcal A/\mathcal M^{i+1})|_{H_{\vec h}^d}
\rightarrow (\mathcal A / \mathcal M^{i})|_{H_{\vec h}^d} \right)
\end{equation*}
which is locally free of rank $h_i$ because it is the kernel of a surjection of
locally free sheaves. Note that the fiber of $\mathcal B$ at $I$ is the
associated graded ring of $S/I$.
There is a canonical surjection of graded algebras $O_{H_{\vec h}^d}[x_1,
\ldots, x_d] \rightarrow
\mathcal B$ which defines a morphism
$\pi_{\vec h}\colon H_{\vec h}^d \rightarrow \mathcal H_{\vec h}^d$ to the
standard graded Hilbert scheme.   The ideal $I$ gets mapped to its initial ideal
with respect to the weight vector $(-1,\ldots, -1)$.

With the exception $\vec h=(1,3,2,1,1)$, we will show that the irreducible
components of~$H_{\vec h}^d$ and~$\mathcal H_{\vec h}^d$ are in bijection via
the map $\pi_{\vec h}$.

\begin{prop}\label{prop:1de-irred}
	Each subscheme $H^d_{(1,d,e)}$ is irreducible.
\end{prop}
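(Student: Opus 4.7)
My plan is to show that for Hilbert function $(1,d,e)$, every ideal parametrized by $H^d_{(1,d,e)}$ is forced to be homogeneous, so that the map $\pi_{\vec h}\colon H^d_{(1,d,e)} \to \mathcal H^d_{(1,d,e)}$ is a set-theoretic bijection which is in fact the identity on closed points. Since $\mathcal H^d_{(1,d,e)} \cong \Gr(N-e, S_2)$ is irreducible by Proposition~\ref{prop:stdgraded-grassmannian}, and both $H^d_{\vec h}$ and $\mathcal H^d_{\vec h}$ are reduced subschemes of $H^d_n$, the irreducibility of $H^d_{(1,d,e)}$ will then follow.

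First I would extract from the defining condition on the fibers of $\mathcal A/\mathcal M^i$ that any closed point $I\in H^d_{(1,d,e)}$ satisfies the two containments $I\subset\m^2$ and $\m^3\subset I$. The first comes from the equality $\dim_k S/(I+\m^2) = h_0 + h_1 = 1+d = \dim_k S/\m^2$, which forces $I+\m^2=\m^2$. The second follows from $\dim_k S/(I+\m^3) = 1+d+e$, which already equals the colength of $I$, so $I+\m^3 = I$.

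Given these two containments, writing any $f\in I$ as $f = f_2 + f_{\geq 3}$ with $f_2\in S_2$ and $f_{\geq 3}\in\m^3\subset I$, the degree-$2$ part satisfies $f_2 = f - f_{\geq 3} \in I \cap S_2 =: I_2$. Consequently $I = I_2 + \m^3$ is a homogeneous ideal, and so $I$ coincides with its own initial ideal under the weight $(-1,\ldots,-1)$, giving $\pi_{\vec h}(I) = I$. Hence the underlying sets of $H^d_{(1,d,e)}$ and $\mathcal H^d_{(1,d,e)}$ inside $H^d_n$ coincide, and since both are reduced they agree as subschemes. Irreducibility then transfers from the Grassmannian.

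The only thing to be careful about is matching the scheme-theoretic definition of $H^d_{\vec h}$ (built from the fiber-dimension conditions on $\mathcal A/\mathcal M^i$) with the set-theoretic description above. There is no serious obstacle: the key observation is that in Hilbert function $(1,d,e)$ no room remains for deforming the degree-$2$ generators of $I$ into higher-degree tails, because $\m^3$ is already forced to sit inside every such ideal.
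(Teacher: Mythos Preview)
Your argument is correct and is essentially the same as the paper's: the paper's one-line proof asserts $H^d_{(1,d,e)}\cong\mathcal H^d_{(1,d,e)}$ and invokes Proposition~\ref{prop:stdgraded-grassmannian}, and your proposal simply unpacks that isomorphism by showing $\m^3\subset I\subset\m^2$ forces every such ideal to be homogeneous.
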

\begin{proof}
	Since $H^d_{(1,d,e)}\cong \mathcal H^d_{(1,d,e)}$, this follows from Proposition~\ref{prop:stdgraded-grassmannian}.
\end{proof}

\begin{prop} \label{prop:fibration}
Fix $\vec h = (1,d,e, f)$. Let $m = (d+1)d/2 - e = \dim_k S_2/I_2$.  Then every fiber of $\pi_{\vec h}$ is irreducible of dimension $mf$. In particular, the irreducible components of $H_{\vec h}^d$ are exactly the preimages of the irreducible components of $\mathcal H_{\vec h}^d$.
\end{prop}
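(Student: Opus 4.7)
The plan is to identify each fiber $\pi_{\vec h}^{-1}(J)$ explicitly with an affine space of dimension $mf$, and then to upgrade this pointwise description to a Zariski-local trivialization from which the statement about components follows immediately.

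First I would fix $J \in \mathcal H^d_{\vec h}$ and any $I \in \pi_{\vec h}^{-1}(J)$, and observe two containments that make the problem finite-dimensional. Since $h_k = 0$ for $k \geq 4$, both $I$ and $J$ contain $\m^4$, and since $h_1 = d$ forces $J_1 = 0$, we also have $I \subset \m^2$. Thus $I$ is determined by its image $\bar I$ in $\bar S := S/\m^4 = S_0 \oplus S_1 \oplus S_2 \oplus S_3$, and in fact $\bar I \subset S_2 \oplus S_3$. The condition $\initial(I) = J$ now translates to a short exact sequence of $k$-vector spaces
\[
0 \longrightarrow J_3 \longrightarrow \bar I \longrightarrow J_2 \longrightarrow 0
\]
in which the right-hand map is the projection onto the $S_2$ summand. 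A splitting is equivalent to choosing, for each element $q$ of a fixed basis of $J_2$, a lift $\tilde q = q + c(q) \in \bar I$; the cubic correction $c(q) \in S_3$ is canonically well-defined modulo $J_3$, so the data is a linear map $J_2 \to S_3/J_3$. This identifies the fiber set-theoretically with $\Hom_k(J_2, S_3/J_3)$, which has dimension $m \cdot f$.

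Next I would verify that every linear map $c \colon J_2 \to S_3/J_3$ produces a valid ideal with $\initial(I) = J$. In $\bar S$ the ideal axiom is automatic: for $\ell \in S_1$ and $q \in J_2$, $\ell \cdot (q + c(q)) = \ell q$ in $\bar S$ (the correction is pushed into $\m^4$), and $\ell q \in S_1 \cdot J_2 \subset J_3 \subset \bar I$; while multiplication of $\bar I$ by $S_{\geq 2}$ lands in zero. A direct computation then shows the resulting $\bar I$ has associated graded $J$ and Hilbert function $(1,d,e,f)$. Because the correspondence is algebraic in $c$, it upgrades to a scheme isomorphism $\pi_{\vec h}^{-1}(J) \cong \mathbb A^{mf}$, so each fiber is irreducible of dimension $mf$.

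Finally, to deduce the bijection of components I would relativize over $\mathcal H^d_{\vec h}$: on an open affine $U \subset \mathcal H^d_{\vec h}$ on which the degree-$2$ and degree-$3$ graded pieces of the universal ideal sheaf are both trivialized, the same construction produces an isomorphism $\pi_{\vec h}^{-1}(U) \cong U \times \mathbb A^{mf}$ compatible with $\pi_{\vec h}$. Hence $\pi_{\vec h}$ is a Zariski-locally trivial $\mathbb A^{mf}$-bundle, and the irreducible components of $H^d_{\vec h}$ are exactly the preimages of those of $\mathcal H^d_{\vec h}$. The main obstacle will be verifying this relativization scheme-theoretically---that the splittings assemble into a regular morphism in families rather than merely a pointwise bijection---but since all relevant degrees are bounded by $3$, the argument is careful bookkeeping with universal sheaves rather than a substantive geometric difficulty.
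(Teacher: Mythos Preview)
Your proposal is correct and the fiber description is essentially identical to the paper's: the paper fixes a basis $q_1,\ldots,q_m$ of $J_2$ and a basis $c_1,\ldots,c_f$ of $S_3/J_3$ and writes down the map $\mathbb A^{mf}\to H^d_n$ sending $(t_{ij})$ to $\langle q_i-\sum_j t_{ij}c_j\rangle + J_{\geq 3}$, which is exactly your $\Hom_k(J_2,S_3/J_3)$ in coordinates.

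The one place you diverge is in deducing the component bijection. You propose to prove that $\pi_{\vec h}$ is a Zariski-locally trivial $\mathbb A^{mf}$-bundle and flag the scheme-theoretic verification as ``the main obstacle.'' The paper sidesteps this entirely: it simply observes that over each irreducible component of $\mathcal H^d_{\vec h}$ the map has irreducible equidimensional fibers over an irreducible base, hence irreducible total space, and that these preimages cover $H^d_{\vec h}$ and lie over distinct components downstairs. So your route yields a stronger conclusion (local triviality) at the cost of the bookkeeping you anticipate, while the paper's route needs only the pointwise fiber computation you already did plus a standard irreducibility criterion.
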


\begin{proof}
Fix a point in $\mathcal H_{\vec h}^d$, which corresponds to a homogeneous ideal
$I$. Let $q_1, \ldots, q_m$ be quadratic generators of $I$, and let $c_1,
\ldots, c_f$ be cubics which form a vector space basis for $S_3/I_3$.  Define a map $\phi\colon\mathbb A^{mf} \rightarrow H_n^d$ via the ideal
\begin{equation*}
\left\lideal q_i - \textstyle\sum_{j=1}^f t_{ij} c_j \mid 1 \leq i \leq m \right\rideal +
I_{\geq 3}
\end{equation*}
where the $t_{ij}$ are the coordinate functions of $\mathbb A^{mf}$.  Because a product of any variable $x_\ell$ with any of these generators is in $I$, this ideal has the right Hilbert function and maps to the fiber of $\pi_{\vec h}$ over $I$.  Furthermore, $\phi$ is bijective on field-valued points, so the fiber is irreducible of dimension $mf$.  

For the last statement, we have that for any irreducible component of $\mathcal H_{\vec h}^d$, the restriction of $\pi_{\vec h}$ has irreducible equidimensional fibers over an irreducible base, so the preimage is irreducible. These closed sets cover $H_{\vec h}^d$ and because each lies over a distinct component of $\mathcal H_{\vec h}^d$, they are distinct irreducible components.
\end{proof}

Combining Theorem~\ref{thm:multigrad-1d21} with the above proposition, we see $H^d_{(1,d,2,1)}$ has exactly two components: $P_d:=\pi^{-1}(\mathcal P_d)$ and $Q_d:=\pi^{-1}(\mathcal Q_d)$.  In addition, by Propositions~\ref{prop:stdgraded-1d22}, \ref{prop:stdgraded-1331}, and~\ref{prop:fibration}, $H^d_{(1,d,2,2)}$ and $H^3_{(1,3,3,1)}$ are irreducible.

\begin{prop}\label{prop:1d11-irred}
Let $\vec h = (1,d,1,\ldots,1)$ and let $m \geq 3$ be the largest index such
that $h_m$ is non-zero. Then $H^d_{\vec h}$ is irreducible of dimension
$(d+2m-2)(d-1)/2$. At a generic point, after a change of coordinates, we can
take the ideal to be:
\begin{equation*}
\lideal x_d^{m+1}, x_i^2 - x_d^{m}, x_j x_k \mid 1 \leq i < d, 1 \leq j < k \leq
d\rideal
\end{equation*}
\end{prop}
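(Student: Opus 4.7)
The plan is to analyze the morphism $\pi_{\vec h}\colon H^d_{\vec h} \to \mathcal H^d_{\vec h}$, using Proposition~\ref{prop:multigrad-1d1s}: $\mathcal H^d_{\vec h}$ is irreducible of dimension $d-1$, parametrized by a line $\ell \in \Gr(1,S_1^*)$. Since $GL_d$ acts on $H^d_{\vec h}$ compatibly with $\pi_{\vec h}$ and transitively on the base, it suffices to analyze a single fiber. Fix $\ell=y_d$, which gives $I_0=\lideal x_ax_b : (a,b)\ne (d,d),\ a\le b\rideal+\lideal x_d^{m+1}\rideal$; the fiber $F=\pi_{\vec h}^{-1}(I_0)$ then parametrizes ideals $I$ whose associated graded is $I_0$.

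I parametrize $F$ using the standard monomial basis $\{1,x_1,\ldots,x_d,x_d^2,\ldots,x_d^m\}$ of $S/I_0$. Each $I\in F$ has canonical generators $\tilde q_{ab}=x_ax_b+h_{ab}$ (for $(a,b)\ne (d,d)$, $a\le b$) together with $\tilde g=x_d^{m+1}$, where $h_{ab}\in\mathrm{span}(x_d^3,\ldots,x_d^m)$ and $\tilde g$ admits no deformation because $S/I_0$ has no elements of degree greater than $m$. The lift condition for the Koszul syzygy $x_dq_{ab}-x_bq_{ad}=0$, valid for each $a\le b<d$, gives $x_dh_{ab}-x_bh_{ad}\equiv 0$ in $S/I_0$; since $x_bh_{ad}\equiv 0$ when $b<d$ (as $x_bx_d\in I_0$), this forces $x_dh_{ab}\equiv 0$, which reading off in the basis $\{x_d^4,\ldots,x_d^m\}$ kills every coefficient of $h_{ab}$ in degrees below $m$ and leaves $h_{ab}=\alpha_{ab}x_d^m$ for a single scalar $\alpha_{ab}$. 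The remaining Koszul syzygies (among the $q_{ad}$'s and those involving $x_d^{m+1}$) lift automatically. Hence $F$ is an affine space on $\binom{d}{2}$ parameters $\alpha_{ab}$ (one for each $a\le b<d$) plus $(d-1)(m-2)$ parameters in the vectors $h_{ad}\in\mathrm{span}(x_d^3,\ldots,x_d^m)$ (one for each $a<d$), for total fiber dimension $(d-1)(d+2m-4)/2$; together with the base dimension this yields $\dim H^d_{\vec h}=(d-1)(d+2m-2)/2$.

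For the generic-form claim, at a generic point of $F$ each $\alpha_{aa}\ne 0$, so rescaling $x_a$ normalizes $\alpha_{aa}=1$ for each $a<d$; a linear cross-change $x_a\mapsto x_a+cx_b$ then kills each $\alpha_{ab}$ for $a<b<d$; finally the polynomial change of variables $x_a\mapsto x_a-\sum_{t=3}^{m}c_tx_d^{t-1}$, where $h_{ad}=\sum_t c_tx_d^t$, is an automorphism of the local algebra that transforms $\tilde q_{ad}$ to $x_ax_d$, while any residual perturbations of the $\alpha_{ab}$ can be reabsorbed by iterating the earlier linear normalizations. The main obstacle is the syzygy analysis in the preceding paragraph; this should be straightforward because the minimal syzygies of $I_0$ are either the degree-three Koszul relations among the quadric generators or the degree-$(m+2)$ relations $x_a\cdot x_d^{m+1}-x_d^m\cdot q_{ad}=0$ involving $x_d^{m+1}$, and both types lift explicitly modulo $I_0$.
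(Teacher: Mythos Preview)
Your approach matches the paper's: study the fiber of $\pi_{\vec h}$ over a fixed $I_0$ (using $\GL_d$-transitivity on the base), count parameters, then normalize by changes of variables. However, your syzygy analysis has a genuine gap.

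You test the lift of the Koszul syzygy by requiring $x_dh_{ab}-x_bh_{ad}\equiv 0$ in $S/I_0$. That is only the first-order (tangent) condition. To parametrize the actual fiber you need Buchberger's criterion: the S-pair $x_d\tilde q_{ab}-x_b\tilde q_{ad}=x_dh_{ab}-x_bh_{ad}$ must reduce to $0$ modulo the \emph{deformed} generators $\tilde q_{**}$ and $x_d^{m+1}$. Reducing $x_bh_{ad}$ via $\tilde q_{bd}=x_bx_d+h_{bd}$ gives $x_bx_d^{t}\to -x_d^{\,t-1}h_{bd}$, which contributes nonzero terms in $\operatorname{span}(x_d^5,\ldots,x_d^m)$, not $0$. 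Consequently, for $m\ge 5$ the constraint is not $h_{ab}=\alpha_{ab}x_d^m$ but $h_{ab}=p_{ab}(h_{\ast d})+\alpha_{ab}x_d^m$, with the lower coefficients of $h_{ab}$ determined polynomially by those of the $h_{\ast d}$. For instance with $d=3$, $m=5$, the $x_3^4$-coefficient of $h_{11}$ is forced to be $-\beta_3^2$, where $\beta_3$ is the $x_3^3$-coefficient of $h_{13}$. So your explicit description of the fiber is incorrect, though the fiber is still an affine space on the parameters $(\alpha_{ab},h_{\ast d})$ and your dimension count survives.

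The paper sidesteps this by reversing your order of operations. It \emph{first} applies the substitution $x_i\mapsto x_i+b_{i3}x_d^2+\cdots+b_{im}x_d^{m-1}$ to pass to an ideal $J'$ containing $x_ix_d$ for all $i<d$; the $(d-1)(m-2)$ parameters $b_{ij}$ account for your $h_{ad}$-parameters. In $J'$ the S-pair argument now legitimately forces $x_ix_j-a_{ij}x_d^m\in J'$ with a single free coefficient, giving $(d-1)d/2$ further parameters, and one checks directly that every such choice yields a Gr\"obner basis. This ordering also cleans up the generic-form step: the $a_{ij}$ form a symmetric bilinear form on $k^{d-1}$, and diagonalizing it gives the stated normal form in one stroke, with no ``residual perturbations'' to reabsorb.
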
	

\begin{proof}
Fix an ideal $I \in \mathcal H^d_{\vec h}$, and after a change of
coordinates, we can assume
\begin{equation*}
I = \lideal x_d^{m+1}, x_ix_j \mid 1 \leq i \leq d-1, 1 \leq j \leq d \rideal
\end{equation*}
Let $J$ be an ideal in the fiber above $I$.
By assumption $J$ contains an elements of the form
\begin{equation*}
x_ix_d - b_{i3} x_d^3 - \cdots - b_{im} x_d^{m}
\end{equation*}
for $1 \leq i < d$.
Let $J'$ be the image of $J$ after the change of coordinates
\begin{equation} \label{eqn:1d11-cov}
x_i \mapsto x_i + b_{i3} x_d^2 + \cdots b_{im} x_d^{m-1}
\end{equation}
and note that $J'$ contains $x_i x_d$ for $1 \leq i < d$ and also lies over $I$.
Thus, for each $1 \leq i \leq j < d$, $J'$ contains an element of the form $f =
x_i x_j - a_{ij} x_d^k - \cdots$ for some $k$. However, $J'$ must also contains $x_j (x_i
x_d) - x_d f = a_{ij} x_d^{k+1} + \cdots$, so in order to have $I$ as the
initial ideal, $k$ must equal $m$. Therefore, $J'$ is of the form
\begin{equation*}
J' = \lideal x_d^{m+1}, x_ix_j - a_{ij} x_d^{m}, x_k x_d \mid 1 \leq i \leq j
\leq d, 1 \leq k < d \rideal
\end{equation*}

Conversely, for any choice of $a_{ij}$ and $b_{ij}$, applying the change of
variables in Equation~\ref{eqn:1d11-cov} to the ideal $J'$ gives a unique ideal
$J$ with $I$ as an initial
ideal.  Thus, the fiber is irreducible of dimension $(m-2)(d-1) + (d-1)d/2 =
(d-1)(d + 2m-4)/2$, which, together with Proposition~\ref{prop:multigrad-1d1s}
proves the first statement.

For the second statement, note that the coefficients $a_{ij}$ define a symmetric
bilinear form. By taking the form to be generic and choosing a change of
variables, we get the desired presentation of the quotient algebra.
\end{proof}

The above propositions cover all Hilbert functions of length at most $8$ except for $\vec h=(1,3,2,1,1)$.  In this case the fibers of $\pi_{(1,3,2,1,1)}$ are not equidimensional.  The dimension of the fiber depends on whether or not the homogeneous ideal requires a cubic generator.  

\begin{lemma}
No ideal in $\mathcal H^3_{(1,3,2,1,1)}$ requires a quartic generator.	
\end{lemma}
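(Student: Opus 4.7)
The plan is to use Macaulay duality to reduce to an explicit monomial calculation. Since ``$I$ requires no quartic generator'' is equivalent to $I_4 = S_1 \cdot I_3$, a statement depending only on the degree~$3$ and degree~$4$ parts of $I$, I would analyze these via the inverse system $I^\perp \subset S^* = k[y_1,y_2,y_3]$.

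First, let $f \in S_4^*$ span the $1$-dimensional subspace $I_4^\perp$. Because $I^\perp$ is closed under formal differentiation and $\dim I_3^\perp = 1$, the three partials $\partial_1 f, \partial_2 f, \partial_3 f$ all lie in the same $1$-dimensional subspace of $S_3^*$. I would then invoke the earlier lemma (the one preceding Proposition~\ref{prop:stdgraded-grassmannian}) with $m = 4$ and $r = 1$, valid since $\ch k \neq 2,3$ makes $4!$ invertible, to conclude that $f$ can be written in terms of a single linear form. Up to a linear change of coordinates and rescaling, I may therefore assume $f = y_1^4$, so that $I_3^\perp = \langle \partial_1 f \rangle = \langle y_1^3 \rangle$.

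By apolarity, $I_3$ then consists of all degree~$3$ monomials in $x_1,x_2,x_3$ other than $x_1^3$, and $I_4$ consists of all degree~$4$ monomials other than $x_1^4$. It remains to check directly that every monomial $m \in I_4$ factors as $x_i \cdot g$ for some variable $x_i$ and some $g \in I_3$. For a generic $m$, any $x_i$ dividing $m$ will work; the only subtle case is $m = x_1^3 x_j$ for $j \in \{2,3\}$, where one must take $i = 1$ rather than $i = j$, obtaining $m/x_1 = x_1^2 x_j \in I_3$. This yields $S_1 \cdot I_3 = I_4$, completing the argument.

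I do not foresee a real obstacle: the apolarity lemma does all the structural work by pinning down $f$ to a pure power of a linear form, after which the result follows from an elementary monomial check. The conceptual content is simply that the Hilbert function $(1,3,2,1,1)$ together with differentiation-closure forces the top piece of the inverse system to be concentrated in one variable, and that is exactly the regime where the ideal is generated in degrees $\leq 3$.
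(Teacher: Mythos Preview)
Your argument is correct, but it takes a different route from the paper's. The paper dispatches the lemma in one line via Macaulay's growth bound: if $I$ required a quartic generator, then dropping that generator would produce a homogeneous ideal with Hilbert function $(1,3,2,1,2,\ldots)$, which is impossible since no monomial ideal (hence no homogeneous ideal) has $h_3=1$ and $h_4\geq 2$. Your approach instead uses apolarity to pin down $I_3$ and $I_4$ explicitly up to change of coordinates and then verifies $S_1\cdot I_3 = I_4$ by a direct monomial check. The paper's argument is shorter and more conceptual, leaning on a classical fact about $O$-sequences; yours is self-contained and constructive, and it has the side benefit of actually identifying the form of $I_3$ and $I_4$ (indeed, exactly the normalization used later in Lemma~\ref{lem:13211-stratification}). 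One very minor point: when you invoke the earlier lemma with $r=1$, you are implicitly using that the partials of $f$ are not all zero, which follows since $f\neq 0$ is homogeneous of degree $4$ and $4!$ is invertible; it would do no harm to say so explicitly.
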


\begin{proof}
If $I$ were such an ideal, then leaving out the quartic generator would yield an
ideal with Hilbert function $(1,3,2,1,2)$. No such ideal exists, because no such
monomial ideal exists.
\end{proof}

\begin{lemma} \label{lem:13211-stratification}
There is a $4$-dimensional irreducible closed subset $\mathcal Z$ of $\mathcal H = \mathcal H_{(1,3,2,1,1)}^3$ where the corresponding homogeneous ideal requires a single cubic generator.  On $\mathcal U = \mathcal H \backslash \mathcal Z$, the ideal does not require any cubic generators.
\end{lemma}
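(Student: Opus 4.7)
\emph{Setup.} I identify each $I \in \mathcal{H}$ with its Macaulay inverse system $V_\bullet = V_1 \oplus V_2 \oplus V_3 \oplus V_4 \subset S^*$. Since $h_1 = 3$, $V_1 = S_1^*$; since $\dim V_3 = 1$ and $\partial V_4 \subset V_3$, a generator of $V_4$ has partial derivatives spanning at most a line, which (in characteristic $\neq 2$) forces $V_4 = \langle \ell^4 \rangle$ and $V_3 = \langle \ell^3 \rangle$ for a unique $[\ell] \in \mathbb{P}(S_1^*)$. The ideal $I$ requires a cubic generator exactly when $S_1 \cdot I_2 \subsetneq I_3$, and by orthogonality $(S_1 I_2)^\perp = W_3 := \{c \in S_3^* : \partial_i c \in V_2 \text{ for all } i\}$. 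Since $\dim I_3/(S_1 I_2) = \dim W_3 - \dim V_3 = \dim W_3 - 1$ and $\ell^3 \in W_3$, I define $\mathcal{Z} := \{V_\bullet \in \mathcal{H} : \dim W_3 \geq 2\}$; this is closed by upper semicontinuity of kernel dimension for the natural family of maps $S_3^* \to (S_2^*/V_2)^3$.

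\emph{Key characterization.} The heart of the proof is the equivalence: $\dim W_3 \geq 2$ if and only if $V_2 \subset \Sym^2(P)$ for some $2$-dimensional $P \subset S_1^*$, in which case $P$ is unique and contains $\ell$. The elementary identity $\{q \in S_2^* : \partial q \subset P\} = \Sym^2(P)$ (verified by direct computation in a basis adapted to $P$), together with its cubic analogue $\{c \in S_3^* : \partial c \subset \Sym^2(P)\} = \Sym^3(P)$, will be used throughout. For the forward direction, set $P := \partial V_2 \subset S_1^*$; note $\ell \in P$ since $\partial(\ell^2) = 2\ell$, and $V_2 \subset \Sym^2(P)$ by the elementary identity. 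To rule out $\dim P = 3$, write $V_2 = \langle \ell^2, q \rangle$ in coordinates with $\ell = y_1$ and $q$ normalized to have no $y_1^2$ term; any $c \in W_3$ has $\partial_i c = \lambda_i q + \mu_i y_1^2$, and the integrability conditions $\partial_j \partial_i c = \partial_i \partial_j c$ yield a linear system on $(\lambda_1, \lambda_2, \lambda_3)$ whose coefficient matrix has full rank $3$ precisely when the projections of $\partial_i q$ to $S_1^*/\langle y_1 \rangle$ span a $2$-dimensional space, i.e., when $\partial V_2 = S_1^*$. In that case $\lambda_i = 0$, forcing $c \in \langle \ell^3 \rangle$ and $\dim W_3 = 1$. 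This coordinate calculation is the main technical obstacle. Conversely, if $V_2 \subset \Sym^2(P)$ then $W_3 \subset \Sym^3(P)$, and a short direct computation in coordinates $P = \langle y_1, y_2 \rangle$ produces a $2$-dimensional space of cubics in $y_1, y_2$ with partials in $V_2$. Uniqueness of $P$ follows because if $V_2 \subset \Sym^2(P) \cap \Sym^2(P')$ with distinct $P, P'$ both through $\ell$, then $\Sym^2(P) \cap \Sym^2(P') = \langle \ell^2 \rangle$, forcing $\dim V_2 = 1$.

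\emph{Parametrization and conclusion.} Let $\widetilde{\mathcal{Z}}$ be the variety of triples $(\ell, P, V_2)$ where $\ell \in \mathbb{P}(S_1^*)$, $P$ is a $2$-plane in $S_1^*$ through $\ell$, and $V_2$ is a $2$-plane in $\Sym^2(P)$ through $\ell^2$. This is a $\mathbb{P}^1 \times \mathbb{P}^1$-bundle over $\mathbb{P}^2$, hence irreducible of dimension $4$. By the uniqueness of $P$, the forgetful map $\widetilde{\mathcal{Z}} \to \mathcal{Z}$ is bijective on points, so $\mathcal{Z}$ is irreducible of dimension $4$; moreover the coordinate count above shows $\dim W_3 = 2$ on every point of $\mathcal{Z}$, so exactly one cubic generator is needed there. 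The description of $\mathcal{U} = \mathcal{H} \setminus \mathcal{Z}$ is immediate from the definition.
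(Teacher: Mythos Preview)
Your proof is correct and follows essentially the same approach as the paper's. Both arguments identify $\mathcal Z$ via Macaulay duality by the condition that the degree-$2$ part of the inverse system lies in $\Sym^2(P)$ for a $2$-plane $P\subset S_1^*$ containing $\ell$, and both arrive at the identical parametrization by the data $(\langle\ell\rangle, P, V_2)$ --- which the paper phrases as a complete flag $V_1\subset V_2\subset S_1^*$ together with a $2$-plane in $\Sym^2(V_2)$ containing $V_1^2$ --- computing the dimension as $2+1+1=4$ in the same way.

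Two minor remarks. First, the paper obtains the upper bound $\dim W_3\le 2$ (equivalently, at most one cubic generator is ever needed) by observing that no ideal has Hilbert function $(1,3,2,3)$, whereas you extract this from your explicit rank computation in the converse direction; both routes are valid. Second, your parameter space $\widetilde{\mathcal Z}$ is an \emph{iterated} $\mathbb P^1$-bundle over $\mathbb P^2$ (the choice of $V_2$ depends on $P$), not literally a $\mathbb P^1\times\mathbb P^1$-bundle as you wrote; this does not affect irreducibility or the dimension count.
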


\begin{proof}
Let $\mathcal S_j$ denote the $j$th graded component of $\mathcal O_{\mathcal H}[x, y,z]$ and $\mathcal I_j \subset \mathcal S_j$ the $j$th graded component of the universal family of ideals on $\mathcal H$.
Consider the cokernel $\mathcal Q$ of the multiplication map on the coherent sheaves $\mathcal I_2 \otimes_{\mathcal O_{\mathcal H}} \mathcal S_1 \rightarrow \mathcal I_3$ on $\mathcal H$.  The dimension of $\mathcal Q$ is upper semicontinuous.  Furthermore, since it is not possible to have an algebra with Hilbert function $(1,3,2,3)$, the dimension is at most $1$. The set $\mathcal Z$ is exactly the support of $\mathcal Q$.

We claim that $\mathcal Z$  is parametrized by the data of a complete flag $V_1
\subset V_2 \subset S_1^*$, and a $2$-dimensional subspace $Q$ of $V_2^2$ which
contains $V_1^2$. The dimension of this parametrization is $2+1+1 = 4$. An ideal
is formed by taking the ideal which is orthogonal to $Q$ in degree $2$ and to
the powers $V_1^3$ and $V_1^4$ in degrees $3$ and $4$ respectively. After a change of variables, we can assume that the flag is orthogonal to $\langle x \rangle \subset \langle x, y\rangle \subset S_2$.  Then the degree $2$ generators of $I$ are $x^2, xy, xz$ and another quadric. It is easy to see that these only generate a codimension $2$ subspace of $S_3$.  Conversely, for any ideal with this property, the orthogonal cubic has a $1$-dimensional space of derivatives. Furthermore, there exists a homogeneous ideal with Hilbert function $(1,3,2,2,1)$ contained in the original ideal. The cubics orthogonal to these have a $2$-dimensional space of derivatives, so we can write them in terms of a $2$-dimensional space of the dual variables. These two vector spaces determine the flag, and the parametrization is bijective on closed points. In particular, $\mathcal Z$ is irreducible of dimension $4$.
\end{proof}

\begin{lemma} \label{lem:13211z-irred}
The preimage $Z := \pi^{-1}(\mathcal Z)$ is irreducible of dimension $11$.
\end{lemma}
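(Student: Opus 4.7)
My plan is to show that the restriction of $\pi = \pi_{(1,3,2,1,1)}$ to $Z$ has generic fiber an irreducible affine $7$-space over the $4$-dimensional irreducible base $\mathcal Z$, so that $\dim Z = 4 + 7 = 11$ and $Z$ is irreducible.

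Using Lemma~\ref{lem:13211-stratification} together with the change-of-coordinates of Lemma~\ref{lem:change-of-var}, I would assume that a generic $I_0 \in \mathcal Z$ has the concrete form $I_0 = \langle x^2,\, xy,\, xz,\, yz,\, y^3,\, z^5\rangle$, with $\{1, x, y, z, y^2, z^2, z^3, z^4\}$ a $k$-basis for $S/I_0$. Then I would parametrize the fiber $\pi^{-1}(I_0)$ by writing every $J$ in the fiber as $J = \langle g_i + r_i \rangle$, where the $g_i$ are these six minimal generators and each tail $r_i$ is a $k$-linear combination of basis monomials of degree strictly greater than $\deg g_i$. The a priori parameter count is $4\cdot 2 + 1 = 9$: two parameters per quadric tail (coefficients of $z^3$ and $z^4$), one parameter for the quartic tail of $y^3$, and none for $z^5$.

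The crucial step is to impose the condition $\initial(J) = I_0$ via the linear syzygies of $I_0$. The syzygies $z\cdot x^2 - x\cdot xz = 0$ and $z\cdot xy - y\cdot xz = 0$, applied to the deformed generators and then reduced modulo $J$, each yield an element whose only surviving term is a nonzero multiple of $z^4$ (exactly when the coefficient of $z^3$ in the corresponding quadric tail is nonzero). Since $z^4 \notin I_0$, this forces the coefficients of $z^3$ in the tails of $x^2$ and of $xy$ to vanish, giving exactly two independent constraints. A direct check verifies that the remaining linear syzygies, together with the non-Koszul syzygies involving $y^3$ and $z^5$, reduce modulo $J$ to elements whose initial forms already lie in $I_0$, so no further constraints arise. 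One then verifies by a direct reduction-to-basis calculation that every choice of the remaining $7$ parameters produces an ideal with $\dim_k(S/J) = 8$ and $\initial(J) = I_0$, so the fiber of $\pi$ over $I_0$ is an irreducible affine $7$-space.

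Finally, I would globalize the fiber parametrization over $\mathcal Z$ using the $\GL_3$-action on $S$ to pass between the various triples $(V_1, V_2, Q)$, obtaining an irreducible $11$-dimensional scheme $\widetilde Z$ equipped with a morphism $\widetilde Z \to Z$ that is bijective on $k$-points. Since $Z$ is closed in the Hilbert scheme, this yields irreducibility and $\dim Z = 11$. The main obstacle is the syzygy bookkeeping: one must carefully verify that exactly two independent constraints appear on the tails and that no non-minimal or higher-degree syzygy contributes further, and the argument must be carried out uniformly as $I_0$ varies over $\mathcal Z$; the $\GL_3$-equivariance should make this uniformity straightforward once the syzygy analysis at the monomial special point is settled.
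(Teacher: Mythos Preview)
Your approach is essentially the paper's: parametrize an ideal $J$ in the fiber by tails $a_i z^3 + b_i z^4$ on the four quadric generators and $b_5 z^4$ on the cubic, then use the two syzygies $z\cdot g_1 - x\cdot g_3$ and $z\cdot g_2 - y\cdot g_3$ to force $a_1=a_2=0$, leaving a $7$-dimensional affine space.  The paper carries out exactly this computation and then invokes Buchberger's criterion to certify that no further relations arise.

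The one genuine weakness in your plan is the globalization step.  You compute the fiber only over the specific monomial ideal $I_0=\langle x^2,xy,xz,yz,y^3,z^5\rangle$ and then propose to spread this over all of $\mathcal Z$ via the $\GL_3$-action.  But $\GL_3$ does \emph{not} act transitively on $\mathcal Z$: after normalizing the flag $V_1\subset V_2$, the Borel stabilizer has two orbits on the $\mathbb P^1$ of choices of $Q\supset V_1^2$, and your $I_0$ lies in the open one.  So your argument as stated leaves the (three-dimensional) closed orbit uncovered, and you cannot conclude irreducibility of $Z$ from the generic fiber alone.  The paper avoids this by performing the syzygy analysis at an \emph{arbitrary} point of $\mathcal Z$: after normalizing only the flag, it keeps the fourth quadric $q$ and the cubic $c$ as parameters and checks that the same two constraints $a_1=a_2=0$ arise uniformly.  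Replacing your monomial special point by this slightly more general form closes the gap with no additional work.
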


\begin{proof}
By Lemma~\ref{lem:13211-stratification}, it suffices to prove that the fibers of $\pi$ over $\mathcal Z$ are irreducible and $7$-dimensional.  Let $I$ be a point in $\mathcal Z$. As in the proof of Lemma~\ref{lem:13211-stratification}, we can assume that the ideal corresponding to a point of $\mathcal Z$ is generated by $x^2, xy, xz, q, c$, where $q$ and $c$ are a homogeneous quadric and cubic respectively. A point $J$ in the fiber must be generated by $\mathfrak m^5$ and:
\begin{align*}
g_1 &:= x^2 + a_1 z^3 + b_1z^4 \\
g_2 &:= xy + a_2z^3 + b_2z^4 \\
g_3 &:= xz + a_3z^3 + b_3z^4 \\
g_4 &:= q + a_4z^3 + b_4z^4 \\
g_5 &:= c + b_5z^4
\end{align*}

The $a_i, b_i$ are not necessarily free.  We must impose additional conditions to ensure the initial ideal for the weight vector $(-1,-1,-1)$ is no larger than $I$.  In particular we must have 
\begin{align*}
	z g_1 - x g_3 & =  a_1z^4 + b_1z^5 - a_3xz^3 - b_3xz^4 \in J \\
	z g_2 - y g_3 & =  a_2z^4 + b_2z^5 - a_3yz^3 - b_3yz^4 \in J 
\end{align*}	

This implies $a_1=a_2=0$, because the final three terms of each expression are already in $J$.  By Buchberger's criterion, it is also sufficient for these conditions to be satisfied. Therefore the fibers are $7$-dimensional.
\end{proof}

\begin{lemma} \label{lem:13211u-irred}
The preimage $U := \pi^{-1}(\mathcal U)$ is irreducible of dimension $12$.
\end{lemma}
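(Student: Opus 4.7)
The plan is to mirror the approach of Lemma~\ref{lem:13211z-irred}: for each $I \in \mathcal U$ I will parametrize the fiber $\pi^{-1}(I)$ as an irreducible affine space of dimension $6$, and then globalize over $\mathcal U$ (which is open in the irreducible $6$-dimensional scheme $\mathcal H^3_{(1,3,2,1,1)}$ of Proposition~\ref{prop:stdgraded-1de11}, hence itself irreducible of dimension $6$) to conclude that $U$ is irreducible of dimension $12$.

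Fix $I \in \mathcal U$ with basis $q_1,\dots,q_4$ of $I_2$. By the definition of $\mathcal U$ the multiplication $S_1 \otimes I_2 \twoheadrightarrow I_3$ is surjective, and a dual calculation as in Proposition~\ref{prop:stdgraded-1de11} shows that $S_1 \otimes I_3 \twoheadrightarrow I_4$ on all of $\mathcal H^3_{(1,3,2,1,1)}$, so $I$ requires no cubic or quartic generators. Choose lifts $c \in S_3$ and $d \in S_4$ of generators of the one-dimensional spaces $S_3/I_3$ and $S_4/I_4$. Every $J \in \pi^{-1}(I)$ satisfies $\mathfrak m^5 \subseteq J \subseteq \mathfrak m^2$, and using $\mathfrak m^5 \subseteq J$ together with the two surjections above to absorb higher-order contributions via changes of generators, one obtains a unique presentation
\[
J = \langle q_i - t_i c - s_i d : 1 \leq i \leq 4 \rangle + \mathfrak m^5, \qquad (t_i, s_i) \in k^8.
\]
Buchberger's criterion then cuts out the valid $(\vec t, \vec s)$: for each linear syzygy $\sum \alpha_{ik} x_k \otimes q_i$ in the $3$-dimensional $K := \ker(S_1 \otimes I_2 \to I_3)$, the corresponding element $\sum \alpha_{ik} x_k (q_i - t_i c - s_i d) \in J$ has degree-$4$ initial form $-\sum_i t_i (\sum_k \alpha_{ik} x_k) c$, which must lie in $I_4$; the degree-$5$ contribution lies in $\mathfrak m^5 \subseteq J$ automatically and imposes no condition on $\vec s$. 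After a coordinate change taking $L$ to $\langle y_3 \rangle$, multiplication $S_1 \otimes (S_3/I_3) \to S_4/I_4$ picks out the $x_3$-coefficient, so the degree-$4$ condition reduces to the linear equation $\sum_i t_i \alpha_{i,3} = 0$ on $\vec t \in k^4$; intrinsically, the valid $\vec t$-space is $\pi_3(K)^\perp \subseteq I_2^*$ where $\pi_3\colon K \to I_2$ sends $\sum \alpha_{ik} x_k \otimes q_i$ to $\sum_i \alpha_{i,3}\,q_i$, and one identifies $\pi_3(K)$ with $\{q \in I_2 : x_3 q \in x_1 I_2 + x_2 I_2\}$.

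The main obstacle is showing that $\dim \pi_3(K) = 2$ uniformly on $\mathcal U$ (not just generically), since a jump in this dimension would cause the fibers of $\pi$ to jump and could introduce extra irreducible components. I expect this to follow from a direct case analysis using the explicit $(L,V)$-parametrization of $\mathcal U$ from Proposition~\ref{prop:stdgraded-1de11}, by inspecting which $q \in I_2$ satisfy $x_3 q \in x_1 I_2 + x_2 I_2$. Granted this, the valid parameter space is $\mathbb A^2 \times \mathbb A^4 = \mathbb A^6$, and the parametrization $(\vec t, \vec s) \mapsto J$ is visibly injective (distinct parameters yield generators with distinct degree-$3$ or degree-$4$ classes modulo $I_3$ and $I_4$). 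Hence $\pi^{-1}(I) \cong \mathbb A^6$ is irreducible; making these choices over a local trivialization of $\mathcal U$ gives a Zariski-locally trivial $\mathbb A^6$-bundle structure on $U \to \mathcal U$, whence $U$ is irreducible of dimension $12$.
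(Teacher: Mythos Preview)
Your approach is essentially the same as the paper's: parametrize each fiber by eight parameters $(t_i,s_i)$, observe that the $s_i$ are unconstrained, and argue that exactly two independent linear conditions are imposed on the $t_i$ by the syzygies, giving $\mathbb A^6$ fibers over the irreducible $6$-dimensional base $\mathcal U$. Your identification of the relevant subspace as $\pi_3(K)=\{q\in I_2 : zq\in xI_2+yI_2\}$ is exactly the intersection $\langle zq_i\rangle_3\cap\langle xq_j,yq_j\rangle_3$ that the paper computes.

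The gap is precisely the step you flag as ``the main obstacle'': you assert that $\dim\pi_3(K)=2$ holds uniformly on $\mathcal U$ and say you \emph{expect} this to follow from a case analysis via the $(L,V)$-parametrization, but you do not carry it out. This is the entire content of the lemma; everything else is bookkeeping. The paper handles this step not via the $(L,V)$-parametrization but by invoking Poonen's classification of isomorphism types of $(1,3,2)$ algebras and checking, for each type, that the intersection $zI_2\cap(xI_2+yI_2)$ is $2$-dimensional (for the admissible choices of the distinguished variable $z$). Note that this really does require a case check: the dimension of $\pi_3(K)$ is not constant on all of $\mathcal H^3_{(1,3,2,1,1)}$---it jumps on $\mathcal Z$, which is why the $\mathcal Z$ fibers are $7$-dimensional---so you cannot expect a soft argument to give uniformity on $\mathcal U$. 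If you want to avoid citing Poonen, you will still need to run through the finitely many normal forms of $I_2$ (with the constraint $z^2\in I_2^\perp$ and $S_1\cdot I_2=I_3$) and verify the dimension directly.
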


\begin{proof}
By Proposition~\ref{prop:stdgraded-1de11}, it suffices to show that the fibers of $\pi$ over $\mathcal U$ are irreducible of dimension $6$.  Let $I$ be an ideal in $\mathcal U$. Let $V$ be the $1$-dimensional subspace of $S_1^*$ such that $I_3$ is orthogonal to $\Sym_3(V)$ and let $q_1, \ldots, q_4$ be the degree~$2$ generators of $I$. Choose a basis $x,y,z$ of $S_1$ such that $x, y$ is a basis for $V^\perp$.  Then any $J$ in $\phi^{-1}(I)$ is of the form $$\lideal q_i + a_iz^3 +b_iz^4 \mid 1 \leq i \leq 4\rideal + \m^5.$$

	
We claim that forcing $\initial_{(-1,-1,-1)}(J) = I$ imposes two linear conditions on the $a_i$s.  Using the table of isomorphism classes of $(1,3,2)$ algebras in~\cite{poonen-alg-isom}, one can check that for any $4$-dimensional subspace $\lideal q_1,q_2,q_3,q_4\rideal$ of $\Sym_2(V)$, the intersection of $\lideal zq_i \mid 1\leq i\leq 4 \rideal_3$  and $\lideal xq_j, yq_j\mid 1\leq j\leq 4 \rideal_{3}$ is $2$-dimensional.  After choosing a different basis for $Q$, we may assume $zq_1, zq_2 \in \lideal xq_j, yq_j \mid 1\leq j\leq 4\rideal$.  By using a similar argument to the one in Lemma~\ref{lem:13211z-irred}, we see $a_1=a_2=0$.  Since the only other linear syzygies among the $q_i's$ have no $z$ coefficients and $xz^3,yz^3$ and $\m^5$ are in the ideal, these are the only conditions imposed.  Therefore, the fiber is $6$-dimensional.
\end{proof}

Therefore, it suffices to show the following irreducible sets are contained in the smoothable component.
$$ H^d_{(1,d,1,\ldots,1)}, H^d_{(1,d,2)}, P_d,Q_d, H^d_{(1,d,2,2)}, H^3_{(1,3,4)}, H^3_{(1,3,3)},H^3_{(1,3,3,1)}, U,Z.$$

\subsection{Smoothable generic algebras}

In this section we prove that the irreducible sets:
$$H^d_{(1,d,1,\ldots,1)}, H^d_{(1,d,2)}, P_d,Q_d, H^d_{(1,d,2,2)}, H^3_{(1,3,4)}.$$
are in the smoothable component by showing that a generic algebra in each
is smoothable.

\begin{prop}\label{prop:1d1-smoothable}
	All algebras in $H^d_{(1,d,1,\ldots,1)}$ are smoothable.
\end{prop}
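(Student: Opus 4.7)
The plan is to apply Method~2 from the introduction of Section~\ref{sec:smoothable}: I will exhibit a single ideal $M \in H^d_{(1,d,1,\ldots,1)}$ satisfying (a) $M \in R^d_n$ (where $n := d+m$) and (b) $M$ is a smooth point of $H^d_n$. Together these imply that a Zariski neighborhood $U$ of $M$ in $H^d_n$ lies in the unique irreducible component of $H^d_n$ through $M$, namely $R^d_n$; then $U \cap H^d_{(1,d,1,\ldots,1)}$ is a nonempty open, hence dense, subset of the irreducible scheme $H^d_{(1,d,1,\ldots,1)}$ (by Proposition~\ref{prop:1d11-irred}), and closedness of $R^d_n$ forces $H^d_{(1,d,1,\ldots,1)} \subseteq R^d_n$.

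I take $M$ to be the monomial initial ideal of the generic form from Proposition~\ref{prop:1d11-irred}, namely
\[
M := \lideal x_d^{m+1},\; x_i^2,\; x_j x_k \mid 1 \le i<d,\; 1 \le j<k\le d\rideal,
\]
whose quotient has monomial basis $\{1, x_1, \ldots, x_{d-1}, x_d, x_d^2, \ldots, x_d^m\}$ and hence Hilbert function $(1,d,1,\ldots,1)$. For (a), smoothability of $M$ is witnessed by the one-parameter family
\[
J_t := \lideal \textstyle\prod_{k=0}^m(x_d - kt),\; x_i(x_i - t),\; x_j x_k\rideal,
\]
whose zero scheme for generic $t \ne 0$ is a reduced set of $n = d+m$ distinct points: the $m+1$ axis points $(0, \ldots, 0, kt)$ for $k = 0, \ldots, m$, together with the $d-1$ coordinate-axis points having $t$ in the $i$-th slot and zeros elsewhere, for $i = 1, \ldots, d-1$. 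At $t = 0$ each generator specializes to the corresponding generator of $M$, giving $J_0 = M$ and $M \in R^d_n$.

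For (b), I need $\dim_k \Hom_S(M, S/M) = nd$. Because $M$ is monomial, $\Hom_S(M, S/M)$ is determined by choosing an image in $S/M$ for each of the $1 + (d-1) + \binom{d}{2}$ monomial generators, subject to compatibility with the Koszul first syzygies between pairs of generators. The main technical obstacle of the proof is this dimension count, which I would organize by grouping the syzygies according to the three families of generators --- $x_d^{m+1}$, the $x_i^2$, and the $x_j x_k$ --- and verifying that the number of independent syzygy constraints reduces the $|\text{gens}| \cdot n$ total parameters to exactly $nd$, matching $\dim R^d_n$ and thereby certifying smoothness of $H^d_n$ at $M$.
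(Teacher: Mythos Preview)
Your proof has a genuine gap in step~(b): the monomial ideal $M$ is \emph{not} a smooth point of $H^d_n$ once $d\ge 3$, so Method~2 cannot be applied with this choice of ideal.

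Take the smallest interesting case $d=3$, $m=2$, so $n=5$ and $M=\lideal x_1^2,\,x_2^2,\,x_1x_2,\,x_1x_3,\,x_2x_3,\,x_3^3\rideal$.  A direct computation of the graded pieces of $\Hom_S(M,S/M)$ gives
\[
\dim\Hom_0=5,\qquad \dim\Hom_{-1}=13,\qquad \dim\Hom_{-2}=3,\qquad \dim\Hom_{-3}=0,
\]
for a total of $21$, whereas $nd=15$.  The degree~$-2$ piece is what kills you: the three maps sending $x_3^3\mapsto x_i$ and all quadrics to~$0$ satisfy every syzygy (since $x_1\cdot x_i$ and $x_2\cdot x_i$ all lie in $M$), so they are honest tangent vectors with no analogue in the $d=2$ Fogarty situation.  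Similar ``extra'' tangent directions persist for all $d\ge 3$ and all $m\ge 2$, so the count you propose to carry out in your final paragraph will never close up to~$nd$.

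The paper avoids this obstacle by using Method~1 instead: it proceeds by induction on~$d$, takes the generic (non-monomial) ideal
\[
I=\lideal x_d^{m+1},\,x_i^2-x_d^m,\,x_jx_k\rideal
\]
from Proposition~\ref{prop:1d11-irred}, and exhibits $I$ as $\initial_{(m,\ldots,m,2)}(J_1\cap J_2)$ where $J_1$ is a reduced point and $J_2$ has Hilbert function $(1,d-1,1,\ldots,1)$, hence is smoothable by induction.  No smoothness of any point on the Hilbert scheme is required.
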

\begin{proof}
We prove this by induction on $d$.  Note the $d=1$ case is trivial.
Let $m$ be the greatest integer such that $h_m$ is
nonzero. Then, by Proposition~\ref{prop:1d11-irred} we can take a generic ideal
to be
	$$I=\langle x_1^2-x_d^{m}, \dots, x_{d-1}^2-x_d^{m}, x_d^{m+1}\rangle + \langle x_ix_j \mid i\ne j \rangle.$$
	We define $J$ to be:
	\begin{equation*}
	J=\langle x_1^2+x_1-x_d^{m}, \dots, x_{d-1}^2-x_d^{m}, x_d^{m+1}\rangle + \langle x_ix_j \mid i\ne j \rangle
	\end{equation*}
	Note that $J$ admits a decomposition as $J=J_1\cap J_2$ where $J_1=\langle x_1+1,x_2,x_3,\dots x_d \rangle$ and
	\begin{equation*}
	J_2= \langle x_1-x_d^{m}, x_2^2-x_d^{m}, \dots, x_{d-1}^2-x_d^{m}, x_d^{m+1}\rangle + \langle x_ix_j \mid i\ne j \rangle
	\end{equation*}
	As the Hilbert function of $J_2$ equals $(1,d-1,1,\ldots,1)$, the inductive hypothesis implies that $J_2$ is smoothable.  Thus $J$ itself is also smoothable.  Next note that $I\subset \initial_{(m,\ldots,m,2)}(J)$.  Since both $I$ and $J$ have the same colength, we obtain the equality $I= \initial_{(m,\ldots,m,2)}(J)$.  The corresponding Gr\"obner degeneration induces a map $\mathbb A^1\to R^d_n$ which sends $0$ to $I$.  Thus $I$ is smoothable.
\end{proof}

\begin{prop}\label{prop:1d2-smoothable}
	All algebras in $H^d_{(1,d,2)}$ are smoothable.
\end{prop}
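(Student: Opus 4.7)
Since any local algebra $S/I$ with Hilbert function $(1,d,2)$ satisfies $\mathfrak{m}^3 = 0$ and hence coincides with its associated graded ring, the morphism $\pi_{(1,d,2)}$ of Section~\ref{subsec:Hdh} is an isomorphism, and so $H^d_{(1,d,2)} \cong \mathcal{H}^d_{(1,d,2)} \cong \Gr(N-2, S_2)$ is irreducible by Proposition~\ref{prop:stdgraded-grassmannian}. Since the smoothable component $R^d_{d+3}$ is closed in $H^d_{d+3}$, it suffices to exhibit a dense subset of $H^d_{(1,d,2)}$ consisting of smoothable ideals.

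The plan is to realize the generic $(1,d,2)$ ideal as the flat limit of the ideal of $d+3$ distinct points collapsing to the origin. Given generic points $p_0 = 0, p_1, \ldots, p_{d+2} \in \mathbb{A}^d$, consider the one-parameter family $I_\epsilon := I(\{\epsilon p_i\}_{i=0}^{d+2})$; for $\epsilon \neq 0$ this is a radical ideal of distinct points, so its flat limit $I_0 := \lim_{\epsilon \to 0} I_\epsilon$ lies in $R^d_{d+3}$ by closedness. A computation with Macaulay's inverse system shows that for generic $(p_i)$ the limit $I_0$ has Hilbert function $(1,d,2)$: the $(d+3)$-dimensional inverse system consists of one constant, $d$ independent linear forms $(p_i \cdot y)$ spanning $S_1^*$, and two remaining degree-$2$ elements $q_\ell = (p_{d+\ell} \cdot y)^2 - \sum_j c_j^{(\ell)} (p_j \cdot y)^2$ for $\ell = 1, 2$, where the coefficients $c_j^{(\ell)}$ express $p_{d+\ell}$ in terms of the basis $p_1, \ldots, p_d$ of $\mathbb{A}^d$.

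It remains to check that the resulting morphism $\Phi \colon (p_0, p_1, \ldots, p_{d+2}) \mapsto I_0$ is dominant onto $H^d_{(1,d,2)}$. Dimension comparison gives source $d(d+3)$ and target $d^2 + d - 4$, so that even after modding out by the scaling symmetry $(p_i) \mapsto (\lambda p_i)$ and by permutations of the points, the source has sufficient dimension (the difference is $d+4$). To verify dominance concretely, one computes the differential of $\Phi$ at a convenient base point (for instance, $p_i = e_i$ for $1 \leq i \leq d$ together with generic $p_{d+1}, p_{d+2}$) and checks that the induced tangent map is surjective onto $T_W \Gr(N-2, S_2)$. Together with irreducibility of $H^d_{(1,d,2)}$ and closedness of $R^d_{d+3}$, this yields $H^d_{(1,d,2)} \subset R^d_{d+3}$ as desired. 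The main obstacle is the dominance verification, which amounts to an explicit surjectivity calculation on tangent spaces at a carefully chosen configuration of points.
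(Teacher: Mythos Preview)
Your approach is genuinely different from the paper's, and the strategy of realizing a generic $(1,d,2)$ ideal as the scaling limit of $d+3$ points is sound in outline.  But the proposal is not a proof: you yourself identify the dominance of $\Phi$ as ``the main obstacle'' and then do not carry it out.  A surjectivity claim for a differential at an unspecified ``convenient base point'' is an assertion, not an argument, and until it is checked there is no proof.  There are also smaller inaccuracies: your dimension bookkeeping is off (the source minus target is $2d+4$, not $d+4$), and your inverse-system formula for the limit is asserted rather than derived --- it is not obvious that the degree-$2$ part of $(I_0)^\perp$ is spanned by those two particular combinations of squares of linear forms.

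By contrast, the paper avoids the dominance question entirely.  It argues by induction on $d$: simultaneous diagonalization of a generic pencil of quadrics (\cite[Lemma~22.42]{harris}) puts the generic $(1,d,2)$ ideal into the explicit shape
\[
I=\langle x_ix_j\mid i\ne j\rangle+\langle x_i^2-a_ix_{d-1}^2-b_ix_d^2\mid 1\le i\le d-2\rangle,
\]
and then one writes down $J_1$ with Hilbert function $(1,d-1,2)$ and a reduced point $J_2$ such that $I=\initial_{(1,\ldots,1)}(J_1\cap J_2)$.  Smoothability of $I$ then follows from the inductive hypothesis together with Lemma~\ref{lem:primary}.  This is shorter and requires no tangent-space computation.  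If you want to salvage your route, the cleanest fix is to exploit $\GL_d$-equivariance: the image of $\Phi$ is a $\GL_d$-stable constructible subset of $\Gr(N-2,S_2)$, and since generic pencils form a single dense orbit (again by simultaneous diagonalization), it suffices to check that the limit of \emph{one} explicit configuration lies in that orbit --- which is a finite computation rather than a differential one.
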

\begin{proof}
	The proof is by induction on $d$.  The case $d=2$ follows from Theorem~\ref{thm:plane}.

        Assume $d$ is at least $3$. Note that $I_2^\perp$ defines a pencils of
        quadrics in $d$-variables.  It then follows from \cite[Lemma~22.42]{harris} that, up to isomorphism, a generic ideal in $H^d_{(1,d,2)}$ is of the form
		\begin{equation*}
        I = \langle x_i x_j \mid i \neq j\rangle + \langle x_i^2 - a_i x_{d-1}^2
        - b_i x_d^2 \mid 1 \leq i \leq d-2 \rangle
        \end{equation*}
with $a_i$ and $b_i$ elements of $k$.

Define 
\begin{align*}
        J_1& = \langle x_i x_j \mid i \neq j\rangle +
		\langle x_1 + a_1 x_{n-1}^2 + b_1 x_d^2, x_i^2 - a_i x_{d-1}^2 -
                b_i x_d^2 \mid 2 \leq i \leq d-2 \rangle \\
		J_2 &= \langle x_1-1, x_2, \dots, x_d\rangle
\end{align*}
Since $J_1$ has Hilbert function $(1,d-1,2)$, it is thus smoothable by the induction hypothesis.  One can check that $I=\initial_{(1,\ldots,1)}\left( J_1\cap J_2\right)$.   Therefore $I$ is smoothable.
\end{proof}	
	
\begin{prop}\label{prop:S1d21-smoothable}
	All algebras in $P_d$ are smoothable.
\end{prop}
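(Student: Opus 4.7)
The plan is to proceed by induction on $d$, with base case $d=3$ following from \cite{mazzola}: since $H^3_7$ is irreducible, $P_3 \subseteq H^3_7 = R^3_7$, so every algebra in $P_3$ is smoothable. Assume henceforth $d\geq 4$ and that all algebras in $P_{d-1}$ are smoothable.

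Following the template of Proposition~\ref{prop:1d2-smoothable}, I will exhibit a generic $I \in P_d$ as $\initial_w(J_1 \cap J_2)$, where $J_1$ satisfies $S/J_1 \isom k[x_1,\ldots,x_{d-1}]/I'$ for some $I'\in P_{d-1}$ (hence smoothable by induction and Corollary~\ref{cor:change-of-vars-smoothable}) and $J_2$ is the ideal of a reduced point. By Theorem~\ref{thm:multigrad-1d21} and Proposition~\ref{prop:fibration}, after a linear change of variables I may assume the associated graded ideal $I_0 = \pi(I)$ is orthogonal to the cubic $c = y_1^2 y_2$, so
\[
I_0 = \langle x_2^2,\ x_1^3,\ x_a x_b : a \geq 3 \text{ or } b \geq 3 \rangle + \m^4,
\]
and the fiber of $\pi$ over $I_0$ is parametrized by cubic tail coefficients $t_\alpha \in k$, giving $I = \langle q_\alpha - t_\alpha x_1^2 x_2 \mid \alpha \rangle + \langle x_1^3 \rangle + \m^4$. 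By further applying non-linear coordinate changes of the form $x_i \mapsto x_i + f_i$ with $f_i \in \m^2$ (valid by Corollary~\ref{cor:change-of-vars-smoothable}), I aim to reduce to the case where $t_\alpha = 0$ whenever $q_\alpha$ involves $x_d$.

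In this reduced form, define
\[
J_1 = \langle q_\alpha - t_\alpha x_1^2 x_2 \mid q_\alpha \in k[x_1,\ldots,x_{d-1}]\rangle + \langle x_1^3,\ x_d \rangle + \m^4,
\]
so that $S/J_1$ is $k$-algebra isomorphic to the quotient of $k[x_1,\ldots,x_{d-1}]$ by the ideal obtained by dropping $x_d$, which lies in $P_{d-1}$ and is therefore smoothable by the inductive hypothesis. Let $J_2 = \langle x_1,\ldots,x_{d-1},\ x_d - 1\rangle$ be the ideal of the point $(0,\ldots,0,1)$; since $x_d\in J_1$ and $x_d-1\in J_2$, the ideals are coprime, and Lemma~\ref{lem:primary} yields that $J := J_1 \cap J_2$ is smoothable of colength $d+4$.

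The remaining step, and the main technical obstacle, is verifying $I = \initial_w(J)$ for the weight $w = (0,\ldots,0,1)$: the generators $q_\alpha - t_\alpha x_1^2 x_2$ with $q_\alpha \in k[x_1,\ldots,x_{d-1}]$ lie in both $J_1$ and $J_2$ (each summand belongs to $(x_1,\ldots,x_{d-1})$) and their $w$-initials equal themselves; the generators $x_a x_d$ of $I$ (with tails zeroed) appear as $w$-initials of $x_d \cdot x_a \in J_1\cdot J_2 \subseteq J$ for $a\leq d-1$ and of $x_d(x_d-1)\in J$; finally $x_1^3 \in J_1\cap J_2$ contributes directly, and $\m^4 \subset J$ follows from $x_1^3, x_2^2, x_d \in J_1$ combined with $J_2\supseteq (x_1,\ldots,x_{d-1})$. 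Containment $\initial_w(J) \supseteq I$ then upgrades to equality by the colength identity $\dim_k S/\initial_w(J) = \dim_k S/J = d+4 = \dim_k S/I$, and the resulting Gröbner degeneration furnishes a flat morphism $\mathbb A^1 \to H^d_n$ with special fiber $I$ and generic fiber the smoothable $J$, certifying $I \in R^d_n$. I expect the most delicate part to be the coordinate-reduction step zeroing out the $x_d$-involving tails, since each non-linear change of variables simultaneously perturbs several of the $t_\alpha$ and must be tracked across all generators of $I$.
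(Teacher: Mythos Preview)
Your normalization of the associated graded is incorrect. You assert that a generic $I\in P_d$ can, after a linear change of variables, be taken with $\pi(I)$ orthogonal to $c=y_1^2y_2$. But linear changes act on $c\in\Sym_3 V$ (for the relevant $2$-plane $V\subset S_1^*$) via $\GL_2$, and the $\GL_2$-orbits on $\mathbb P(\Sym_3 V)$ are stratified by root multiplicities. The cubic $y_1^2y_2$ has a double root, so its orbit is the discriminant hypersurface---a proper closed subset. A generic $I\in P_d$ corresponds to a cubic with three distinct roots, which over $\overline k$ normalizes to $y_1^3+y_2^3$ (the form the paper actually uses). Consequently your argument only addresses a closed, non-dense locus in $P_d$, and showing those ideals are smoothable does not force the generic ideal to be smoothable.

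The paper takes a different and shorter route: with the correct normal form it writes the generic $I$ directly as $\initial_{(2,2,3,\ldots,3)}$ of the intersection of a reduced point with a local algebra of Hilbert function $(1,d,1,1)$, already smoothable by Proposition~\ref{prop:1d1-smoothable}, so no induction on $d$ is needed. If you attempt to repair your inductive reduction to $P_{d-1}$ using the correct cubic, be aware that the tail-killing step you flagged as delicate remains a genuine obstacle: for instance, no substitution $x_i\mapsto x_i+q$ with $q\in\m^2$ alters the cubic tail on the generator $x_d^2$, since every monomial in $x_d\cdot\m^2$ already lies in $I_3$, so zeroing that coefficient would require a separate mechanism.
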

\begin{proof}
	Let $I$ be a generic ideal in $P_d$.  
	After a change of variables we may assume $$I = \lideal x_ix_j, x_\ell^2 + x_1^3,x_1^3-x_2^3 \mid 1\leq i<j\leq d, 2<\ell\leq d\rideal.$$ 

One can check 
$$I=\initial_{(2,2,3,\ldots,3)}\left(\lideal x_1+1, x_j \mid j>1\rideal \cap
\lideal x_ix_j, x_\ell^2 +x_1^2,x_1^2-x_2^3 \mid 1\leq i<j\leq d, 2<\ell\leq
d\rideal\right).$$  The second ideal in the intersection has Hilbert function $(1,d,1,1)$, hence is smoothable by Proposition~\ref{prop:1d1-smoothable}.  It follows that $I$ is  smoothable.
\end{proof}

\begin{prop}\label{prop:B1d21-smoothable}
	All algebras in $Q_d$ are smoothable.
\end{prop}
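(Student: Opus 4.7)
The plan is to follow the strategy of Proposition~\ref{prop:S1d21-smoothable}: take a generic $I\in Q_d$, reduce it to a specific normal form via a change of coordinates, and then realize $I$ as the initial ideal of a smoothable ideal $J = J_1\cap J_2$, where $J_1$ is the ideal of a single reduced point and $J_2$ is smoothable by an earlier proposition.

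By Proposition~\ref{prop:fibration}, the fiber of $\pi_{\vec h}\colon Q_d\to\mathcal Q_d$ is irreducible of dimension $N-2$, so a generic $I\in Q_d$ sits above a generic $I_0\in\mathcal Q_d$. By Theorem~\ref{thm:multigrad-1d21}, $I_0^\perp$ is generated by $\ell^3$, $\ell^2$, and a quadric $q$ linearly independent of $\ell^2$. After a linear change of variables I would take $\ell = y_d$ and $q$ in a specific normal form (for instance $q = y_1 y_2$ for $d=3$, and $q = y_1 y_2 + y_3^2 + \cdots + y_{d-1}^2$ in general, chosen so that its derivatives together with $y_d$ span $S_1^*$); the stabilizer of $(\ell, q)$ in $GL_d$ then acts on the cubic perturbations parameterizing the fiber of $\pi_{\vec h}$, and I would use this action to kill as many perturbations as possible, yielding an explicit presentation of $I$ with quadratic generators of the form $x_i x_j$ and $x_k^2 - x_{k+1}^2$, together with a degree-four generator $x_d^4$ (plus possibly a single residual cubic perturbation of the quadric generator corresponding to $q$).

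Next, I would construct the smoothable ideal $J = J_1\cap J_2$, where $J_1 = \lideal x_1-1, x_2,\ldots,x_d\rideal$ is the ideal of a $k$-point and $J_2$ is an ideal with Hilbert function $(1, d, 2)$ whose quadratic generators match the quadratic generators of $I$ up to linear corrections in $x_1$. Since the Hilbert function of $J_2$ is $(1, d, 2)$, it is smoothable by Proposition~\ref{prop:1d2-smoothable}, and because $J_1$ and $J_2$ are coprime (their vanishing loci meet only at the empty scheme), Lemma~\ref{lem:primary} implies $J_1\cap J_2$ is smoothable. For a suitable positive weight vector analogous to the $(2, 2, 3, \ldots, 3)$ used in the $P_d$ argument, the initial ideal $\initial_w(J_1\cap J_2)$ should equal $I$. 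The resulting Gr\"obner degeneration then yields a morphism $\mathbb A^1 \to R^d_n$ sending $0$ to $I$, proving $I$ is smoothable, hence $Q_d \subset R^d_n$.

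The main obstacle will be simultaneously pinning down the right normal form for a generic $I$ together with a matching decomposition $J_1\cap J_2$ and weight vector $w$ so that $\initial_w(J_1\cap J_2) = I$ exactly. Whereas the cubic dual in $\mathcal P_d$ is a sum $\ell_1^3 + \ell_2^3$ engaging only two variables, the quadric $q$ in $\mathcal Q_d$ can involve all $d$ variables, so the quadratic generators of $I$ exhibit a more intricate incidence pattern that must be faithfully reflected in $J_2$. Checking that the initial ideal is exactly $I$ (rather than a proper subset of it) will require a Buchberger-style consistency verification on the $S$-pairs of the generators of $J$, carefully tracking how the degree-four generator $x_d^4$ of $I$ arises from a combination of products of degree-two generators of $J_1 \cdot J_2$.
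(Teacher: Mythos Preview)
Your plan matches the paper's strategy: normalize a generic $I\in Q_d$, then realize it as $\initial_w(J_1\cap J_2)$ with one factor a reduced point and the other of Hilbert function $(1,d,2)$, smoothable by Proposition~\ref{prop:1d2-smoothable}. The paper carries this out with $\ell=y_1$, $q=y_2^2+\cdots+y_d^2$, weight $w=(2,3,\ldots,3)$, and the point at $(-1,0,\ldots,0)$; the roles of your $J_1,J_2$ are swapped, which is immaterial.

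The one substantive divergence is your attempt to use the stabilizer of $(\ell,q)$ to kill the cubic perturbations down to ``possibly a single residual'' one. This step is both unjustified and unnecessary. Unjustified, because you give no argument that the stabilizer action on the $(N-2)$-dimensional fiber of $\pi_{\vec h}$ has orbits of codimension at most one; a dimension count for $d\ge 3$ shows it does not. Unnecessary, because the paper simply \emph{keeps} the perturbations as free parameters: after the change of variables it writes
\[
I=\bigl\langle x_1x_\ell,\; x_ix_j+b_{(i,j)}x_1^3,\; x_k^2-x_{k+1}^2+b_kx_1^3 \;\bigm|\; \ell\ne1,\ 1<i<j\le d,\ 1<k<d\bigr\rangle
\]
with the $b$'s generic, and obtains the $(1,d,2)$-ideal by replacing every $x_1^3$ with $x_1^2$. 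Under the weight $(2,3,\ldots,3)$ the monomials $x_ix_j$ and $x_k^2$ (for $i,j,k>1$) and $x_1^3$ all have weight $6$, while $x_1^2$ has weight $4$, so passing to the initial ideal restores the $x_1^3$ terms. This also dissolves your degree-four worry: with the $b$'s generic, multiplying $x_2^2-x_3^2+b_2x_1^3$ by $x_1$ and reducing by the $x_1x_\ell$ generators already forces $x_1^4\in I$, so no quartic generator is needed and no ``Buchberger-style'' tracking of $x_d^4$ arises.
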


\begin{proof}
	Let $I$ be a generic ideal in $Q_d$.  After a change of variables, we may assume $$I=\lideal x_1x_\ell, x_ix_j+b_{(i,j)}x_1^3, x_k^2-x_{k+1}^2+b_kx_1^3 \mid \ell\neq1, 1<i<j\leq d, 1<k<d \rideal.$$.  Define 
	\begin{eqnarray*}
	J_1&=&\lideal x_1x_\ell, x_ix_j+b_{(i,j)}x_1^2, x_k^2-x_{k+1}^2+b_kx_1^2 \mid \ell\neq1, 1<i<j\leq d, 1<k<d \rideal \\
	J_2&=&\lideal x_1+1,x_2,\ldots, x_d\rideal
	\end{eqnarray*}
	Then $J_1$ has Hilbert function $(1,d,2)$ so is smoothable by Proposition~\ref{prop:1d2-smoothable}.  One can check that $I=\initial_{(2,3,\ldots,3)}\left(J_1\cap J_2\right)$, and thus $I$ is smoothable.
\end{proof}

\begin{prop}\label{prop:1d22-smoothable}
	All algebras in $H^d_{(1,d,2,2)}$ are smoothable.
\end{prop}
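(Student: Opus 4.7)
The proof should parallel the inductive arguments of Propositions~\ref{prop:1d2-smoothable} and~\ref{prop:B1d21-smoothable}: I will induct on the number of variables $d$, with the base case $d=2$ handled by Fogarty's theorem (Theorem~\ref{thm:plane}). For $d\geq 3$, the plan is to take a generic $I\in H^d_{(1,d,2,2)}$ and produce a smoothable auxiliary ideal whose Gr\"obner degeneration specializes to $I$.

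First I would put a generic $I$ into a standard form. By Proposition~\ref{prop:stdgraded-1d22}, a homogeneous ideal with Hilbert function $(1,d,2,2)$ is determined by a $2$-dimensional subspace $L\subset S_1^*$ together with a single additional quadric in $\Sym_2 L$; applying a $\GL_d$ change of variables via Lemma~\ref{lem:change-of-var}, I can arrange $L = \langle y_1, y_2\rangle$ and (generically) the additional quadric to be $x_1 x_2$, so that, up to cubic perturbations parameterized by the fiber of $\pi_{\vec h}$, a generic $I$ takes the form
\[
I = \langle x_1 x_2, \ x_i x_j + c_{ij}(x) : i\geq 3 \text{ or } j\geq 3\rangle + \mathfrak m^4,
\]
where each $c_{ij}$ lies in the two-dimensional space spanned by $x_1^3$ and $x_2^3$.

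Next, I would construct the auxiliary ideal. Let $I''\subset k[x_1,\ldots,x_{d-1}]$ be the analogous ideal in one fewer variable, which has Hilbert function $(1,d-1,2,2)$ and is smoothable by the inductive hypothesis. Set $\tilde I'' = I''\cdot S + (x_d)$; this is smoothable in $H^d_{d+4}$ by Corollary~\ref{cor:change-of-vars-smoothable}. Let $J_2 = \langle x_1,\ldots,x_{d-1},x_d-1\rangle$ be the maximal ideal at the point $(0,\ldots,0,1)$. Since $x_d \in \tilde I''$ and $x_d - 1 \in J_2$, these ideals are coprime, and Lemma~\ref{lem:primary} gives that $J := \tilde I''\cap J_2$ is smoothable, of colength $(d+4)+1 = d+5$, matching that of $I$.

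It then remains to verify that $I = \initial_{\vec w}(J)$ for a suitable non-negative weight vector $\vec w$, analogous to the weight $(2,3,\ldots,3)$ used in Proposition~\ref{prop:B1d21-smoothable}. Generator by generator, each quadric $x_i x_j$ with $i,j<d$ appears as the initial of an element of $\tilde I''\cap J_2$, and the monomial $x_d^2$ (as a generator of $I_0$) appears as the initial of the relation $x_d(x_d-1)\in J$, from which the remaining quadrics and quartics involving $x_d$ can be reconstructed. Once $I\subset \initial_{\vec w}(J)$ is established the matching of colengths forces equality, and the corresponding Gr\"obner family provides a morphism $\mathbb A^1\to H^d_n$ with $0\mapsto I$ and general $t$ mapping to the smoothable ideal $J$; closedness of $R^d_n$ then finishes the argument. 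The main obstacle is accommodating the perturbation terms $c_{ij}$ on quadrics that involve $x_d$ (in particular on $x_d^2$), which are not visible in $I''$; resolving this requires choosing the weight $\vec w$ so that these perturbations and the monomials they perturb become simultaneously $\vec w$-homogeneous, or absorbing them beforehand via a higher-order change of variables adapted to $x_d$.
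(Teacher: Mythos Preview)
There is a genuine gap. Your induction splits off a reduced point along the $x_d$-axis and sets $J=\tilde I''\cap J_2$ with $\tilde I''=I''S+(x_d)$, but then no element of $J$ can have $\vec w$-initial form $x_d^2+c_{dd}$ when $c_{dd}=a\,x_1^3+b\,x_2^3$ is generically nonzero. Indeed, restricting any $f\in J\subset\tilde I''$ to $x_d=0$ lands in $I''$; if $\operatorname{in}_{\vec w}(f)=x_d^2+a\,x_1^3+b\,x_2^3$ (so $2w_d=3w_1=3w_2$), then $f|_{x_d=0}$ has leading part $a\,x_1^3+b\,x_2^3$, which is a nonzero element of $\mathfrak m^3/\mathfrak m^4$ for the $(1,d-1,2,2)$ algebra and hence cannot lie in $I''$. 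The same obstruction hits every generator $x_ix_d+c_{id}$. Your two proposed fixes do not remove it: tuning $\vec w$ only arranges for $x_d^2$ and $c_{dd}$ to share a weight, it does not manufacture an element of $J$; and a substitution $x_d\mapsto x_d+(\text{quadratic in }x_1,x_2)$ leaves $x_d^2$ unchanged modulo $I$ because the cross terms $x_1x_d,\,x_2x_d$ already lie in $I$.

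The paper avoids this by degenerating in a different direction: instead of lowering the embedding dimension, it lowers the top socle degree, reducing to a $(1,d,2,1)$ ideal in the component $Q_d$. The key preparatory step is to view the cubic tails as a pair of symmetric bilinear forms $(\mathfrak m{:}\mathfrak m^3)/\mathfrak m^2\times(\mathfrak m{:}\mathfrak m^3)/\mathfrak m^2\to\mathfrak m^3\cong k^2$ and simultaneously diagonalize them (Harris, Lemma~22.42), after which further changes of variable bring a generic $I$ to
\[
I=\bigl\langle x_\ell^2-a_\ell x_1^3-b_\ell x_2^3,\;x_ix_j,\;x_1^4,\;x_2^4\;\big|\;i<j,\;\ell>2\bigr\rangle.
\]
Then with $J_1=\langle x_\ell^2-a_\ell x_1^2-b_\ell x_2^3,\;x_ix_j,\;x_1^3,\;x_2^4\rangle\in Q_d$ (smoothable by Proposition~\ref{prop:B1d21-smoothable}) and $J_2=\langle x_1+1,x_2,\ldots,x_d\rangle$, one has $I=\operatorname{in}_{(2,2,3,\ldots,3)}(J_1\cap J_2)$: the missing $x_1^3$-tails are recovered from the $x_1^2$-terms in $J_1$, not from a point on a new coordinate axis.
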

\begin{proof}

	Let $I$ be a generic ideal with Hilbert function $(1,d,2,2)$.  After a change of variable, we may assume $\left(\pi(I)\right)_2^{\perp}=\lideal y_1^2, y_2^2\rideal.$  Thus $I$ must be of the form 
	$$\lideal x_\ell^2-a_{\ell\ell}x_1^3-b_{\ell\ell}x_2^3, x_ix_j-a_{ij}x_1^3-b_{ij}x_2^3\mid i< j, 2<\ell \rideal +\m^4.$$
	Note $I$ determines  a symmetric bilinear map 
		$$\begin{array}{ccl}
		\phi\colon (\m:\m^3)/\m^2 \times (\m:\m^3)/\m^2 &\rightarrow& \m^3 \isom k^2\\
		(x_i,x_j)&\mapsto& a_{ij}x_1^3+b_{ij}x_2^3
		\end{array}$$  
	By composing $\phi$ with projections onto the two coordinates, we get a pair of symmetric bilinear forms.  For a generic $\phi$, these are linearly independent and their span is invariant under a change of basis on $\m^3$.  By~\cite[Lemma 22.42]{harris}, there exists a basis for $(\m : \m^3)/\m^2$ and  $\m^3$ such that these bilinear forms are represented by diagonal matrices.
  Thus $I$ has the following form
	$$\lideal x_\ell^2-a_{\ell}x_1^3-b_{\ell}x_2^3, x_ix_j-a_{ij}x_1^3-b_{ij}x_2^3 \mid i< j, 2<\ell \rideal +\m^4,$$
	where $a_{ij}=b_{ij}=0$ if $i$ and $j$ are both greater than $2$ and $a_{\ell},b_{\ell}$ are nonzero for all $\ell >2$.  After suitable changes of variable, we may assume $b_{ij}=a_{ij}=0$ for all $i,j$.  This gives the ideal
	$$I=\lideal x_\ell^2-a_\ell x_1^3-b_{\ell}x_2^3, x_ix_j, x_1^4, x_2^4\mid i< j,  2<\ell \rideal.$$

Now consider the following ideals:
	\begin{align*}
		J_1&:=\lideal x_\ell^2-a_\ell x_1^2-b_\ell x_2^3, x_ix_j, x_1^3,x_2^4 \mid i < j, 2<\ell \rideal,\\
		J_2&:=\lideal x_1+1,x_2,\ldots,x_d\rideal.
	\end{align*}
Note $J_1$ is a $(1,d,2,1)$ ideal and in fact lies in the component $Q_d$, and therefore is smoothable by Propositions~\ref{prop:B1d21-smoothable}.  One can check that $I=\initial_{(2,2,3,\ldots,3)}\left(J_1\cap J_2\right)$, and therefore $I$ is smoothable.
\end{proof}

\begin{prop}\label{prop:134-smoothable}
	All algebras in $H^3_{(1,3,4)}$ are smoothable.
\end{prop}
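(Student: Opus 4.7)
My plan is to exploit the irreducibility of $H^3_{(1,3,4)}$ and reduce the problem to showing smoothability of a single generic ideal. First I would invoke Proposition~\ref{prop:stdgraded-grassmannian} to identify $H^3_{(1,3,4)}$ with $\mathcal H^3_{(1,3,4)} \cong \Gr(2, S_2)$, which is irreducible of dimension $8$; in particular every ideal in $H^3_{(1,3,4)}$ is automatically homogeneous since $h_i=0$ for $i\ge 3$ forces $\m^3\subset I$. Next I would apply the classical simultaneous diagonalization of a penc
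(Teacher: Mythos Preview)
Your setup is exactly the one the paper uses: since $\m^3\subset I$, one has $H^3_{(1,3,4)}=\mathcal H^3_{(1,3,4)}\cong\Gr(2,S_2)$ (Propositions~\ref{prop:stdgraded-grassmannian} and~\ref{prop:1de-irred}), so it suffices to smooth a single generic ideal, and simultaneous diagonalization of a generic pencil of quadrics (\cite[Lemma~22.42]{harris}) produces a normal form. The paper lands on the representative $I=\langle x^2+z^2,\ y^2+z^2,\ z^3,\ yz^2,\ xz^2,\ xyz\rangle$.

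However, your proposal breaks off before the only substantive step: actually exhibiting a smoothing of this generic $I$. Diagonalizing the pencil gives a convenient normal form but does not by itself produce a deformation to reduced points. The paper completes the argument by writing down
\[
J=\langle x+x^2+z^2,\ y^2+z^2,\ z^3,\ yz^2,\ xz^2,\ xyz\rangle,
\]
checking the decomposition
\[
J=\langle x+1,\ y^2,\ yz,\ z^2\rangle\cap\langle x+z^2,\ y^2+z^2,\ z^3,\ yz^2\rangle
\]
into coprime ideals of colengths $3$ and $5$, each supported at a single point with embedding dimension at most $2$ (hence smoothable by Theorem~\ref{thm:plane} and Lemma~\ref{lem:primary}), and finally verifying $I=\initial_{(1,1,1)}(J)$. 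Without supplying such an explicit Gr\"obner degeneration, your argument has not yet shown anything is smoothable.
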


\begin{proof}
	Such algebras are given by a $2$-dimensional subspace of the space of quadratic forms, with isomorphisms given by the action of $\GL_3$. Arguing as in Proposition \ref{prop:1d2-smoothable}, we conclude that, up to isomorphism, a generic $2$-dimensional space of quadrics is spanned by $x^2+z^2$ and $y^2+z^2$. Adding the necessary cubic generators, we get that $I = \lideal y^2+z^2, x^2+z^2, z^3, yz^2, xz^2, xyz \rideal$ is a generic point of $H^3_{(1,3,4)}$.

Consider $J =\lideal y^2+z^2, x+x^2+z^2, z^3, yz^2, xz^2, xyz \rideal$. Note that $J$ is the intersection of an ideal of colength $3$ and an ideal of colength $5$:
	\begin{equation*}
	J = \lideal x+1, y^2, yz, z^2 \rideal \cap \lideal x+z^2, y^2+z^2, z^3, yz^2 \rideal
	\end{equation*}
Since both ideals in the above intersection are smoothable, $J$ itself is smoothable.  One can check that $I=\initial_{(1,1,1)}(J)$.  Therefore $I$ is smoothable.
\end{proof}

\subsection{Algebras which are smooth and smoothable}

In this section we show that the remaining irreducible sets:
$$H^3_{(1,3,3)}, H^3_{(1,3,3,1)}, U, Z$$
are in the smoothable component by finding a point in each which
is smoothable and a smooth point on the Hilbert scheme.
The following result is well known (e.g.~\cite[Lemma~18.10]{cca}
in characteristic~$0$), but we give the proof in arbitrary characteristic for
the reader's convenience:
\begin{prop} \label{prop:monomial-radical}
All monomial ideals are smoothable.
\end{prop}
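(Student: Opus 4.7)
The strategy is to realize each monomial ideal $I$ as an explicit flat limit of ideals of $n$ distinct reduced points, where $n$ is the colength of $I$. Let $\lambda = \{x^{e_1}, \ldots, x^{e_n}\}$ be the standard monomials of $I$; the exponent vectors $e_i \in \mathbb{N}^d$ are distinct, so can be viewed as $n$ distinct points of $\mathbb{A}^d$ (possibly after a mild perturbation to avoid coincidences in small positive characteristic). Let $J$ be the radical ideal cutting out these points, so $J \in R^d_n$ tautologically.

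Applying the $\mathbb{G}_m$-action on $\mathbb{A}^d$ by uniform dilation, $J_t := t \cdot J$ is the ideal of the scaled configuration $\{t e_i\}$; it lies in $R^d_n$ for every $t \in k^\times$. By closedness of $R^d_n \subset H^d_n$, the flat limit $J_0 := \lim_{t \to 0} J_t$ lies in $R^d_n$, and $\mathbb{G}_m$-invariance forces $J_0$ to be a monomial ideal of colength~$n$. The remaining task is to identify $J_0$ with $I$. Working in the monomial chart $U_\lambda \subset H^d_n$ of Section~\ref{subsec:coordinates}, one first verifies that $J \in U_\lambda$ by the non-vanishing of the evaluation determinant $\Delta_\lambda(e_\bullet)$, and then uses the Cramer-type formula $c^m_{m'} = \Delta_{\lambda-m'+m}/\Delta_\lambda$ together with the homogeneity $c^m_{m'}(t e_\bullet) = t^{\deg m - \deg m'} c^m_{m'}(e_\bullet)$; this forces every coordinate $c^m_{m'}$ with $\deg m > \deg m'$ to vanish in the limit.

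The main obstacle is to show that the coordinates with $\deg m \leq \deg m'$ (for $m \notin \lambda$, $m' \in \lambda$, $m \neq m'$) are already zero at the configuration $\{e_i\}$, so that $J_0$ has all $U_\lambda$-coordinates equal to zero and hence coincides with $I$. This reduces to a determinantal identity $\Delta_{\lambda - m' + m}(e_\bullet) = 0$ which I would establish by an explicit cofactor expansion exploiting that $m$ and $m'$ have the same total degree but differ as monomials. As a more robust alternative, one can replace the uniform $\mathbb{G}_m$-action with a one-parameter subgroup of the coordinate torus $\mathbb{G}_m^d$ whose weight $w$ strictly separates the Newton diagram of $\lambda$ from its complement (when such a $w$ exists), and for monomial ideals with no separating weight induct on colength by splitting $I$ into coprime pieces via Lemma~\ref{lem:primary}. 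Care in positive characteristic amounts to maintaining distinctness of the scaled points and the non-vanishing of the evaluation determinant.
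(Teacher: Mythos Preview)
Your idea—place reduced points at the exponent vectors of the standard monomials and scale toward the origin—is essentially the paper's construction specialised to the choice $a_\ell = \ell$, but the execution has real gaps. A minor one: invariance under the uniform $\mathbb G_m$-dilation only forces $J_0$ to be \emph{homogeneous}, not monomial, so that sentence proves nothing. The serious gap is the coordinate analysis. For the family $J_t$ even to stay in $U_\lambda$, let alone limit to $I$, you need $c^m_{m'}(J)=0$ for every $m\notin\lambda$, $m'\in\lambda$ with $\deg m\le\deg m'$: in the equal-degree case a nonzero value survives in the limit, and in the strict case it makes $c^m_{m'}(J_t)=t^{\deg m-\deg m'}c^m_{m'}(J)$ blow up, so the limit leaves the chart. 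You assert the identity $\Delta_{\lambda-m'+m}(e_\bullet)=0$ but give no argument, and ``cofactor expansion exploiting same total degree'' neither handles the strict-inequality case nor points to an actual proof. Your fallbacks do not rescue this: a weight separating $\lambda$ from its complement need not exist (for $I=(xy,x^3,y^3)$ in $k[x,y]$ one has $(2,0),(0,2)\in\lambda$ and $(1,1)\notin\lambda$, and no linear functional puts $(1,1)$ strictly above both $(2,0)$ and $(0,2)$), and a monomial ideal supported at the origin is primary, so Lemma~\ref{lem:primary} cannot split it. Finally, perturbing the points in small characteristic destroys the lattice combinatorics you are relying on, so that issue is not resolved either.

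The paper avoids all of this by writing down, for each generator $x^{\alpha}$ of $I$, the ``falling-factorial'' polynomial $f_\alpha=\prod_j\prod_{\ell=1}^{\alpha_j}(x_j-a_\ell)$, whose unique top-degree term is $x^\alpha$. This gives $I\subseteq\initial(J')$ for $J'=\lideal f_\alpha\rideal$ under any global term order, while each of the $n$ points $(a_{\beta_1},\ldots,a_{\beta_d})$ with $x^\beta\in\lambda$ visibly lies in $V(J')$; colength comparison forces $J'$ to be radical with $\initial(J')=I$. In fact this same device would close your gap cleanly: with your choice of points one has $f_\alpha\in J$, hence $I\subseteq\initial_{(1,\ldots,1)}(J)$, and equality follows by colength—no determinantal identities required. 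Allowing the $a_\ell$ to be arbitrary distinct elements of $k$ (rather than integers) is exactly what makes the argument uniform in characteristic.
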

\begin{proof}
Suppose we have a monomial ideal of colength $n$, written in multi-index notation $I = \lideal \vec x^{\alpha^{(1)}}, \ldots, \vec x^{\alpha^{(m)}} \rideal$. Since $k$ is algebraically closed, we can pick an arbitrarily long sequence $a_1, a_2, \dots$ consisting of distinct elements in $k$. Define
\begin{equation*}
f_i = \prod_{j = 1}^d \big((x_j - a_1)(x_j - a_2) \cdots (x_j - a_{\alpha^{(i)}_j})\big)
\end{equation*}
Note that $\initial(f_i) = \vec x^{\alpha^{(i)}}$ with respect to any global term order. Let $J$ be the ideal generated by the $f_i$ for $1 \leq i \leq m$ and then $\initial(J) \supset I$ and so $J$ has colength at most $n$. However, for any standard monomial $x^\beta$ in $I$, we have a distinct point $(a_{\beta_1},\ldots,a_{\beta_d})$ in $\mathbb A^d$, and each $f_i$ vanishes at this point. Therefore, $J$ must be the radical ideal vanishing at exactly these points and have initial ideal $I$. Thus, $I$ is smoothable.
\end{proof}

The tangent space of an ideal $I$ in the Hilbert scheme is isomorphic to $\Hom_S(I, S/I)$.  We use this fact to compute the dimension of the tangent space of a point $I$.

\begin{propo} \label{prop:133-smoothable}
All algebras in $H^3_{(1,3,3)}$ are smoothable.
\end{propo}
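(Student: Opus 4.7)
The plan is to apply the second method described at the start of Section~\ref{sec:smoothable}: exhibit a single ideal $I \in H^3_{(1,3,3)}$ which is both smoothable and a smooth point of $H^3_7$. By Propositions~\ref{prop:1de-irred} and~\ref{prop:stdgraded-grassmannian}, the space $H^3_{(1,3,3)} \cong \mathcal H^3_{(1,3,3)} \cong \Gr(3, S_2)$ is irreducible of dimension $9$, so a single such witness will force all of $H^3_{(1,3,3)}$ into $R^3_7$.

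A natural candidate is the monomial ideal $I = \lideal x^2, y^2, z^2, xyz \rideal$. Direct inspection shows that the standard monomials modulo $I$ are $\{1, x, y, z, xy, xz, yz\}$ and that $I_3 = S_3$, so $I$ has Hilbert function $(1,3,3)$. Smoothability is immediate from Proposition~\ref{prop:monomial-radical}.

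The substantive step is to show $\dim_k \Hom_S(I, S/I) \leq 21 = \dim R^3_7$, which, combined with the chain of inequalities $21 = \dim R^3_7 \leq \dim_I H^3_7 \leq \dim T_I H^3_7 = \dim_k \Hom_S(I, S/I)$, forces $I$ to be a smooth point of $H^3_7$ lying on $R^3_7$ alone. To compute $\Hom_S(I, S/I)$, I would use the Taylor-type generating syzygies of the monomial ideal $I$: three Koszul relations $y^2 \cdot x^2 = x^2 \cdot y^2$ (and its analogues among $x^2, y^2, z^2$), together with the three mixed relations $yz \cdot x^2 = x \cdot xyz$, $xz \cdot y^2 = y \cdot xyz$, $xy \cdot z^2 = z \cdot xyz$. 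Writing $\phi \in \Hom_S(I, S/I)$ as a tuple $(f_1,\ldots,f_4) \in (S/I)^4$ gives $28$ free parameters. The three quadratic Koszul syzygies impose no condition because $x^2, y^2, z^2$ already annihilate $S/I$, while each mixed syzygy, expanded in the monomial basis of $S/I$, yields four linear conditions; after deduplication these amount to exactly $7$ independent conditions (the vanishing of the four constant coefficients of $f_1,\ldots,f_4$ and of the $x$, $y$, $z$ coefficients of $f_4$).

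The anticipated obstacle is no more than bookkeeping: reducing each of $yz \cdot f_1$, $xz \cdot f_2$, $xy \cdot f_3$ and $x f_4$, $y f_4$, $z f_4$ modulo $I$ in the basis $\{1,x,y,z,xy,xz,yz\}$ and reading off the coefficient conditions. Once the count $28 - 7 = 21$ is established, smoothness of $I$ on $H^3_7$ follows, and the irreducibility of $H^3_{(1,3,3)}$ propagates smoothability to the entire scheme.
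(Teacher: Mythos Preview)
Your proposal is correct and follows exactly the paper's approach: the paper uses the same monomial ideal $I=\lideal x^2,y^2,z^2,xyz\rideal$, appeals to Proposition~\ref{prop:monomial-radical} for smoothability, and asserts (without details) that a direct computation gives a $21$-dimensional tangent space. Your sketch of that tangent-space count via the six pairwise Taylor syzygies is accurate---the three Koszul syzygies impose nothing since $x^2,y^2,z^2$ annihilate $S/I$, and the three mixed syzygies impose precisely the seven conditions $a_1=a_2=a_3=a_4=b_4=c_4=d_4=0$ you name.
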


\begin{proof}
This irreducible set includes the smoothable monomial ideal $I=\lideal x^2, y^2, z^2, xyz\rideal$.  A direct computation shows $I$ has a $21$-dimensional tangent space, so $I$ is a smooth point in $H_n^d$.   Thus, any algebra in $H^3_{(1,3,3)}$ is smoothable.
\end{proof}

\begin{propo} \label{prop:1331-smoothable}
All algebras in $H^3_{(1,3,3,1)}$ are smoothable.
\end{propo}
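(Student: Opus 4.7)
The plan is to mimic the strategy of Proposition~\ref{prop:133-smoothable}: exhibit a single ideal $I \in H^3_{(1,3,3,1)}$ which is simultaneously smoothable and a smooth point of the ambient Hilbert scheme $H^3_8$, then invoke irreducibility of $H^3_{(1,3,3,1)}$ (which follows from Proposition~\ref{prop:stdgraded-1331} together with Proposition~\ref{prop:fibration}) to conclude that the entire set lies in $R^3_8$.

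A natural candidate is the complete intersection monomial ideal $I = \lideal x^2, y^2, z^2\rideal \subset k[x,y,z]$. First I would verify directly that $I$ has Hilbert function $(1,3,3,1)$: in degree $2$ the three generators span a $3$-dimensional subspace so $\dim S_2/I_2 = 3$; in degree $3$ the products $x_i \cdot x_j^2$ account for all cubic monomials except $xyz$, so $\dim S_3/I_3 = 1$; and in degree $4$ the monomial $xyz$ has all three variables in its annihilator image, so $I_4 = S_4$. In particular $I \in H^3_{(1,3,3,1)}$, and by Proposition~\ref{prop:monomial-radical} $I$ is smoothable.

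Next I would show that $I$ is a smooth point of $H^3_8$ by computing that its tangent space $\Hom_S(I, S/I)$ has dimension $24 = \dim R^3_8$. The key observation is that the syzygies on $(x^2, y^2, z^2)$ are the Koszul relations, and after reducing modulo $I$ each Koszul relation becomes trivially zero (since $x^2, y^2, z^2$ all vanish in $S/I$). Hence the three generators can be sent to arbitrary elements of $S/I$, giving
\begin{equation*}
\dim_k \Hom_S(I, S/I) = 3 \cdot \dim_k(S/I) = 3 \cdot 8 = 24.
\end{equation*}

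Since the tangent space dimension at $I$ equals $\dim R^3_8 = 24$, the point $I$ is smooth in $H^3_8$; therefore exactly one irreducible component of $H^3_8$ passes through $I$, and as $I \in R^3_8$, that component is the smoothable one. Because $H^3_{(1,3,3,1)}$ is irreducible and contains the smooth point $I \in R^3_8$, it follows that $H^3_{(1,3,3,1)} \subset R^3_8$. I do not anticipate a substantial obstacle here — the only slightly delicate step is the tangent-space computation, which becomes routine once one recognizes that the Koszul syzygies are automatically killed modulo the complete intersection $I$.
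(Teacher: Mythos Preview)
Your proposal is correct and follows exactly the approach of the paper, which also uses the monomial complete intersection $I=\lideal x^2,y^2,z^2\rideal$, invokes Proposition~\ref{prop:monomial-radical} for smoothability, and checks smoothness via a tangent-space computation. You simply supply the details the paper omits; in particular, your observation that the Koszul syzygies vanish modulo~$I$ (equivalently, that $I/I^2$ is free of rank~$3$ over $S/I$) is precisely what makes the tangent-space count $3\cdot 8=24$ immediate.
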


\begin{proof}
The ideal $I = \lideal x^2, y^2, z^2\rideal$ in this locus is smoothable by Proposition~\ref{prop:monomial-radical}, and one can check that the Hilbert scheme is smooth at this point as well. Therefore $H^3_{(1,3,3,1)}$ is contained in the smoothable component of the Hilbert scheme.
\end{proof}

\begin{propo}\label{prop:Z-smoothable}
All algebras in $Z$ are smoothable.
\end{propo}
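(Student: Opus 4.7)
The plan is to follow the smooth-and-smoothable-point strategy of the two preceding propositions. It suffices to exhibit a single ideal $I \in Z$ that is both smoothable and a smooth point of $H^3_8$: smoothness forces $I$ to lie on a unique irreducible component of $H^3_8$, which must be $R^3_8$ since $I$ is smoothable, and then the irreducible set $Z$ passing through $I$ is contained in $R^3_8$.

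My candidate is the monomial ideal $I = \langle x^2, xy, y^2, xz, yz^2, z^5\rangle \subset k[x,y,z]$. A direct check shows $S/I$ has standard monomial basis $\{1,x,y,z,yz,z^2,z^3,z^4\}$, so $I$ has colength~$8$ and local Hilbert function $(1,3,2,1,1)$. Because $I$ is monomial it equals its associated graded ideal $\pi(I)$, and $yz^2$ does not lie in $S_1\cdot\langle x^2,xy,y^2,xz\rangle$, so $\pi(I)$ genuinely needs a minimal cubic generator; by Lemma~\ref{lem:13211-stratification} this places $\pi(I)$ in $\mathcal Z$, hence $I \in Z$. Smoothability of $I$ is then immediate from Proposition~\ref{prop:monomial-radical}.

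It remains to verify that $\Hom_S(I, S/I)$ has dimension exactly $24 = 3\cdot 8$, which would show $I$ is a smooth point of $H^3_8$. This is a concrete but mechanical computation: parametrize an $S$-linear map $\phi\colon I\to S/I$ by prescribing the image of each of the six minimal generators as a $k$-linear combination of the eight standard monomials of $S/I$, and impose the first-syzygy relations among the generators. The expected outcome is exactly $24$ free parameters. The main obstacle is this tangent-space count, since a miscount or an unlucky choice of monomial ideal can give strictly more than $24$ tangent directions; if $I$ turns out to be singular on $H^3_8$, one can either cycle through the finitely many monomial ideals with Hilbert function $(1,3,2,1,1)$ meeting $\mathcal Z$, or Gr\"obner-degenerate a smoothable intersection $J_1 \cap J_2$ (with $J_1, J_2$ coprime of smaller smoothable colength, so Lemma~\ref{lem:primary} applies) onto a conveniently smooth non-monomial point of $Z$.
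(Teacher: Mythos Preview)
Your overall strategy is the right one and matches the paper's, but the specific monomial ideal you chose is \emph{not} a smooth point of $H^3_8$. Carrying out the syzygy computation for $I=\langle x^2,xy,y^2,xz,yz^2,z^5\rangle$ (there are eight minimal syzygies among the six generators) gives $\dim_k\Hom_S(I,S/I)=30$, not $24$; so $I$ is singular. Worse, up to permuting the variables there is exactly one other monomial ideal in $\mathcal Z$, namely $\langle x^2,xy,xz,yz,y^3,z^5\rangle$, and the same calculation again yields a $30$-dimensional tangent space. Hence no monomial ideal in $Z$ can serve as the smooth-and-smoothable witness, and you are forced into your fallback.

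The paper's proof is precisely that fallback. It takes the non-monomial ideal
\[
I=\langle x^2,\,xy,\,xz,\,yz,\,z^3-y^4\rangle\in Z,
\]
exhibits it as $\initial_{(1,0,0)}\bigl(\langle x+1,y,z\rangle\cap\langle x,\,yz,\,z^3-y^4\rangle\bigr)$ where the second factor is planar and hence smoothable by Fogarty's theorem (so Lemma~\ref{lem:primary} applies to the intersection), and then checks directly that $\dim_k\Hom_S(I,S/I)=24$. The inhomogeneous tail $-y^4$ on the cubic generator is what kills the six surplus tangent directions carried by the nearby monomial ideals. So your proposal is correct in outline, but the concrete execution only succeeds once you pass to a non-monomial point of $Z$, which is exactly the step you left as a contingency and the paper supplies.
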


\begin{proof}
	Consider $I=\lideal x^2,xy,xz,yz,z^3-y^4\rideal \in  Z$ and note that $$I=\initial_{(1,0,0)} \left(\lideal x+1,y,z \rideal \cap \lideal x, yz, z^3-y^4 \rideal\right).$$  The second ideal is smoothable by Theorem~\ref{thm:plane}, so $I$ is smoothable.  One can also check $I$ is smooth in the Hilbert scheme by computing the dimension of $\Hom_S(I, S/I)$.  Therefore $Z$ is contained in the smoothable component of the Hilbert scheme.
\end{proof}

\begin{propo}\label{prop:U-smoothable}
	All algebras in $U\subset H^3_{(1,3,2,1,1)}$ are smoothable.
\end{propo}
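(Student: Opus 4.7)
The plan is to adapt the strategy of Proposition~\ref{prop:Z-smoothable}: I will exhibit a single $I\in U$ that is both smoothable and a smooth point of $H^3_8$, so that $I$ lies on a unique irreducible component of $H^3_8$, which must then be $R^3_8$. Combined with the irreducibility of $U$ (Lemma~\ref{lem:13211u-irred}), this will force $U\subseteq R^3_8$.

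My candidate is the homogeneous ideal
\begin{equation*}
I=\lideal xy,\ xz,\ yz,\ x^2+y^2,\ z^5\rideal.
\end{equation*}
A short check shows $S/I$ has basis $\{1,x,y,z,z^2,z^3,z^4,x^2\}$ (with $y^2=-x^2$), giving Hilbert function $(1,3,2,1,1)$; since $I$ requires no cubic or quartic generator, $I\in U$. For smoothability, I will deform $z^5$ to $f(z)=z(z-a_1)(z-a_2)(z-a_3)(z-a_4)$ for distinct nonzero $a_i\in k$, obtaining
\begin{equation*}
J=\lideal xy,\ xz,\ yz,\ x^2+y^2,\ f(z)\rideal.
\end{equation*}
Then $V(J)$ consists of the four reduced points $(0,0,a_i)$ on the $z$-axis together with a length-$4$ scheme at the origin; localizing at the origin and inverting the units $z-a_i$ shows that the origin component is isomorphic to $k[x,y]/\lideal xy,\ x^2+y^2\rideal$, which is smoothable by Theorem~\ref{thm:plane}. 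Lemma~\ref{lem:primary} then makes $J$ smoothable, and a comparison of leading forms gives $I=\initial_{(1,1,1)}(J)$, exhibiting $I$ as a flat limit of smoothable ideals.

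The main obstacle will be verifying that $\dim_k\Hom_S(I,S/I)=24=\dim R^3_8$. My plan is to parametrize $\phi$ by its images $f_i=\phi(g_i)\in S/I$ and impose the constraints coming from the minimal syzygies of $I$: the three linear syzygies spanning $\ker(S_1\otimes I_2\to I_3)$ among the four quadric generators, together with the two degree-$6$ syzygies $x\cdot z^5=z^4(xz)$ and $y\cdot z^5=z^4(yz)$ relating the quintic generator to the quadrics. The only additional minimal syzygy is a degree-$4$ Koszul-type relation between $xy$ and $x^2+y^2$, but its coefficients lie in $I$ and hence vanish in $S/I$, imposing no constraint. A direct count then yields exactly $24$ free parameters among the $f_i$, confirming that $I$ is a smooth point of $H^3_8$ and completing the proof.
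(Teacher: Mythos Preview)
Your proof is correct and follows the same overall template as the paper's (find one point of $U$ that is both smoothable and smooth in $H^3_8$), but your choice of ideal and deformation are genuinely different and in some ways more transparent.

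The paper works with the non-homogeneous ideal $I=\lideal x^2,\, xy-z^4,\, y^2-xz,\, yz\rideal$ and realizes it as $\initial_{(7,5,3)}$ of the intersection of a reduced point with a local algebra lying in $Q_3\subset H^3_{(1,3,2,1)}$; smoothability of that piece then requires invoking Proposition~\ref{prop:B1d21-smoothable}. Your ideal $I=\lideal xy,\,xz,\,yz,\,x^2+y^2,\,z^5\rideal$ is homogeneous (so $\pi(I)=I$ and membership in $\mathcal U$ is immediate), and your deformation only perturbs the single generator $z^5$ to a split polynomial, giving four reduced points plus a planar length-$4$ scheme at the origin. This routes smoothability through Theorem~\ref{thm:plane} and Lemma~\ref{lem:primary} alone, avoiding the chain of earlier propositions. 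The tangent-space verification is of comparable difficulty in both approaches; your syzygy bookkeeping is accurate (three linear syzygies among the quadrics, the Koszul relation $(x^2+y^2)g_1-(xy)g_4=0$ in degree~$4$ whose coefficients lie in $I$, and the two degree-$6$ relations $z^4 g_2 - x g_5$ and $z^4 g_3 - y g_5$), and the resulting count indeed gives $\dim_k\Hom_S(I,S/I)=24$.
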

\begin{proof}
Consider the ideal $I = \lideal x^2, xy-z^4, y^2-xz,yz \rideal \in U$.  One can check that $$I=\initial_{(7,5,3)}\left(\lideal x,y,z-1 \rideal \cap \lideal x^2,xy-z^3, y^2-xz,yz \rideal \right).$$  The second ideal in the intersection is in $Q_3$ and therefore smoothable by Proposition~\ref{prop:B1d21-smoothable}.  Therefore $I$ is smoothable by the same argument in the proof of Proposition~\ref{prop:134-smoothable}.  One can also check $I$ has a $24$-dimensional tangent space in the Hilbert scheme and is thus smooth.  Therefore $U$ is contained in the smoothable component.
\end{proof}
 
\begin{table}
\small 
\begin{tabular}[h]{|l|l||l|l||l|l|}
\hline
degree & Hilbert & $\mathcal H_{\vec h}^d$ component & reference &
$H_{\vec h}^d$ component & smoothability \\
& function $\vec h$ & dimensions & & dimensions & reference \\
\hline
$4$ & $1,3$ & $0$ & Prop~\ref{prop:stdgraded-grassmannian} & $0$ &
Prop~\ref{prop:monomial-radical} \\
$5$ & $1,3,1$ & $5$ & Prop~\ref{prop:stdgraded-grassmannian} & $5$ &Prop~\ref{prop:1d1-smoothable} \\
 & $1,4$ & $0$ & Prop~\ref{prop:stdgraded-grassmannian}& $0$ &
Prop~\ref{prop:monomial-radical} \\
$6$ & $1,3,1,1$ & $2$ & Prop~\ref{prop:multigrad-1d1s}& $7$ &
Prop~\ref{prop:1d2-smoothable} \\
 & $1,4,1$ & $9$ & Prop~\ref{prop:stdgraded-grassmannian}& $9$ &
Prop~\ref{prop:1d1-smoothable}\\
 & $1,5$ & $0$ & Prop~\ref{prop:stdgraded-grassmannian} & $0$ &
Prop~\ref{prop:monomial-radical}\\
$7$ & $1,3,1,1,1$ & $2$ & Prop~\ref{prop:multigrad-1d1s}& $9$ &
Prop~\ref{prop:1d1-smoothable} \\
 & $1,3,2,1$ & $5$, $6$ & Thm~\ref{thm:multigrad-1d21} & $9$, $10$ &
Prop~\ref{prop:B1d21-smoothable},~\ref{prop:S1d21-smoothable}\\
 & $1,3,3$ & $9$ & Prop~\ref{prop:stdgraded-grassmannian} & $9$ &
Prop~\ref{prop:133-smoothable}\\
 & $1,4,1,1$ & $3$ & Prop~\ref{prop:multigrad-1d1s} & $12$ &
Prop~\ref{prop:1d1-smoothable}\\
 & $1,4,2$ & $16$ & Prop~\ref{prop:stdgraded-grassmannian} & $16$ &
Prop~\ref{prop:1d2-smoothable}\\
 & $1,5,1$ & $14$ & Prop~\ref{prop:stdgraded-grassmannian} & $14$ &
Prop~\ref{prop:1d1-smoothable}\\
 & $1,6$ & $0$ & Prop~\ref{prop:stdgraded-grassmannian} & $0$ &
Prop~\ref{prop:monomial-radical}\\
$8$ & $1,3,1,1,1,1$ & $2$ & Prop~\ref{prop:multigrad-1d1s} & $11$ &
Prop~\ref{prop:1d1-smoothable}\\
 & $1,3,2,1,1$ & $6$ & Prop~\ref{prop:stdgraded-1de11} & $11$(?), $12$ &
Prop~\ref{prop:U-smoothable},~\ref{prop:Z-smoothable}\\
 & $1,3,2,2$ & $4$ & Prop~\ref{prop:stdgraded-1d22}& $12$ &
Prop~\ref{prop:1d22-smoothable}\\
 & $1,3,3,1$ & $9$ & Prop~\ref{prop:stdgraded-1331}& $12$ &
Prop~\ref{prop:1331-smoothable} \\
 & $1,3,4$ & $8$ & Prop~\ref{prop:stdgraded-grassmannian}& $8$ & Prop~\ref{prop:134-smoothable} \\
 & $1,4,1,1,1$ & $3$ & Prop~\ref{prop:multigrad-1d1s} & $15$ &
Prop~\ref{prop:1d1-smoothable}\\
 & $1,4,2,1$ & $7$, $11$ & Thm~\ref{thm:multigrad-1d21}& $15$, $19$ &
Prop~\ref{prop:B1d21-smoothable},~\ref{prop:S1d21-smoothable} \\
 & $1,4,3$ & $21$ & Prop~\ref{prop:stdgraded-grassmannian} & $21$ & *\\
 & $1,5,2$ & $26$ & Prop~\ref{prop:stdgraded-grassmannian} & $26$ &
Prop~\ref{prop:1d2-smoothable}\\
 & $1,5,1,1$ & $4$ & Prop~\ref{prop:multigrad-1d1s} & $18$ &
Prop~\ref{prop:1d1-smoothable}\\
 & $1,6,1$ & $20$ & Prop~\ref{prop:stdgraded-grassmannian} & $20$ &
Prop~\ref{prop:1d1-smoothable}\\
  & $1,7$ & $0$ & Prop~\ref{prop:stdgraded-grassmannian} & $0$ &
Prop~\ref{prop:monomial-radical}\\
\hline
\end{tabular}
\caption{Summary of the decomposition of Hilbert schemes by Hilbert function of
the local algebra with $h_1 \geq 3$. 
The dimensions of the components of $H_{\vec h}^d$ are computed using
Propositions~\ref{prop:fibration} and~\ref{prop:1d11-irred}. In the case
of $\vec h = (1,3,2,1,1)$, Lemmas~\ref{lem:13211z-irred}
and~\ref{lem:13211u-irred} show that $H_{\vec h}^d$ is the union of two
irreducible sets, but we don't know whether the smaller set is contained in the
closure of the larger one.} \label{tbl:thm-1}
\end{table}

\begin{thm} \label{thm:smoothable-summary}
With the exception of local algebras with Hilbert function $(1,4,3)$, every algebra with $n \leq 8$  is smoothable.
\end{thm}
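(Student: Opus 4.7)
The plan is to assemble the results of the previous subsections into a case analysis according to the Hilbert function of the local algebra. First I would use Lemma~\ref{lem:primary} to reduce from arbitrary $0$-dimensional ideals of colength $n\leq 8$ to those supported at a single point, so that $S/I$ is a local Artin $k$-algebra. Then Corollary~\ref{cor:change-of-vars-smoothable} lets me replace any such algebra by an isomorphic presentation in which the embedding dimension equals the number of variables, i.e.\ $d=h_1$.

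Next I would split into cases based on $h_1$. For $h_1 \leq 2$ the algebra corresponds to an ideal in $k[x_1,x_2]$, so smoothability is immediate from Fogarty's Theorem~\ref{thm:plane}. For $h_1 \geq 3$ I would enumerate all Hilbert functions $\vec h = (1,h_1,h_2,\dots)$ with $\sum h_i \leq 8$; these are exactly the rows of Table~\ref{tbl:thm-1}. For each such $\vec h$, Section~\ref{subsec:Hdh} identifies the irreducible components of the stratum $H^d_{\vec h}$, and the goal is to exhibit each component as lying in the smoothable component $R^d_n$.

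For most Hilbert functions, a single proposition suffices: Proposition~\ref{prop:1d1-smoothable} handles the strata $(1,d,1,\dots,1)$; Proposition~\ref{prop:1d2-smoothable} handles $(1,d,2)$; Proposition~\ref{prop:monomial-radical} handles the strata consisting of a single monomial point $(1,n-1)$; Propositions~\ref{prop:1d22-smoothable}, \ref{prop:134-smoothable}, \ref{prop:133-smoothable}, and \ref{prop:1331-smoothable} dispatch $(1,d,2,2)$, $(1,3,4)$, $(1,3,3)$, and $(1,3,3,1)$ respectively. For the two reducible cases $\vec h = (1,3,2,1)$ and $\vec h = (1,4,2,1)$, Propositions~\ref{prop:S1d21-smoothable} and~\ref{prop:B1d21-smoothable} show that both components $P_d$ and $Q_d$ are smoothable. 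Finally, for $\vec h = (1,3,2,1,1)$, Lemmas~\ref{lem:13211z-irred} and~\ref{lem:13211u-irred} give the decomposition into the irreducible sets $Z$ and $U$, and Propositions~\ref{prop:Z-smoothable} and~\ref{prop:U-smoothable} establish smoothability for each.

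The only Hilbert function of colength at most $8$ not covered by these propositions is $(1,4,3)$, which by construction is excluded from the statement. Since by Proposition~\ref{prop:1de-irred} (together with Propositions~\ref{prop:fibration}, \ref{prop:1d11-irred} and the preceding irreducibility results) the strata $H^d_{\vec h}$ for $\vec h$ of length $\leq 8$ cover all local Artin quotients of colength at most $8$, the cited smoothability results collectively exhaust every case. No step here requires new ideas beyond bookkeeping; the main subtlety is simply making sure the enumeration in Table~\ref{tbl:thm-1} is complete and that the reductions to local algebras of embedding dimension $d=h_1$ (via Lemmas~\ref{lem:primary} and~\ref{lem:change-of-var}) are correctly invoked so that each possibility is matched with one of the smoothability propositions above.
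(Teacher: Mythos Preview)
Your proposal is correct and follows essentially the same approach as the paper: reduce to local algebras of embedding dimension $d=h_1$, handle $h_1\leq 2$ by Fogarty's theorem, and for $h_1\geq 3$ enumerate the possible Hilbert functions (those in Table~\ref{tbl:thm-1}) and invoke the appropriate smoothability proposition for each stratum or component thereof. The paper's own proof is simply a terser version of this, pointing to Table~\ref{tbl:thm-1} for the case-by-case references rather than listing them individually.
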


\begin{proof}
The possible Hilbert functions are exactly the Hilbert functions of monomial
ideals, and for $d$ at least~$3$, one can check that there are no possibilities
other those listed in Table~\ref{tbl:thm-1}. For $d$ at most $2$, smoothability
follows from Theorem~\ref{thm:plane}
\end{proof}

In particular, this implies that there are no components other than the ones listed in Theorem~\ref{thm:irred}.

\ifx\thepage\undefined\def\jobname{hilb}\fi
\cvs $Date: 2008/06/19 06:54:28 $ $Author: dustin $
\section{Characterization of smoothable points of \texorpdfstring{$H^4_8$}{H48}}
\label{sec:waldo}
\cvsversion
In this section $k$ will denote a field of characteristic not~$2$ or~$3$, except for Section~\ref{subsec:upper-bound-degree} where $k=\mathbb C$.

We show that besides the smoothable component, the Hilbert scheme $H_8^4$ contains a second component parametrizing the local algebras with $\vec h = (1,4,3)$.  We prove that the intersection of the two components can be described as in Theorem~\ref{thm:waldo}, and as a result we determine exactly which algebras with Hilbert function $(1,4,3)$ are smoothable.

In Section~\ref{subsec:gl4} we introduce and investigate the Pfaffian which appears in 
Theorem~\ref{thm:waldo}, and we prove the crucial fact that it is the unique
$\GL_4$-invariant of minimal degree. In Section~\ref{subsec:naive-intersection}, we give a first approximation of the intersection locus. We then use the uniqueness results from Section~\ref{subsec:gl4} to prove Theorem~\ref{thm:waldo}. We begin by proving reducibility,
\begin{prop}\label{prop:h48red} 
For $d$ at least $4$, the Hilbert scheme $H^d_8$ is reducible.
\end{prop}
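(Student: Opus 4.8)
The plan is to exhibit a single point of $H^d_8$ whose Zariski tangent space has dimension strictly smaller than $8d$. Since $R^d_8$ is irreducible of dimension $8d$ --- it is birational to $(\mathbb A^d)^8\mathord\sslash S_8$ --- and the tangent space at a point of an irreducible component has dimension at least that of the component, such a point cannot lie on $R^d_8$; as $R^d_8$ is itself a component of $H^d_8$, this shows $H^d_8$ is reducible.

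For the base case $d=4$, let $S=k[x_1,\dots,x_4]$ and take $I$ to be a generic point of $\mathcal H^4_{(1,4,3)}\cong\Gr(7,S_2)$ (Proposition~\ref{prop:stdgraded-grassmannian}), so that $A:=S/I$ is an Artinian local $k$-algebra of colength $8$ with Hilbert function $(1,4,3)$. The tangent space of $H^4_8$ at $I$ is $\Hom_S(I,A)$, and since $I$ and $A$ are graded it decomposes into internal-degree pieces, which here are concentrated in degrees $0,-1,-2$: the degree-$0$ piece is the tangent space to the smooth Grassmannian $\Gr(7,S_2)$, of dimension $21$, and the remaining two pieces are computed from the generic minimal free resolution of $A$, the total being $25$. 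This is the small-tangent-space example of \cite{iarrobino-small-tangent-space}, which I would cite for the numerical fact $\dim_k\Hom_S(I,A)=25<32=\dim R^4_8$; it follows that $I\notin R^4_8$, so $H^4_8$ is reducible.

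For arbitrary $d\geq 4$, I would realize the same algebra $A$ as a point of $H^d_8$ via the ``cylinder'' ideal $J=I\cdot S_d+(x_5,\dots,x_d)$ in $S_d=k[x_1,\dots,x_d]$, so that $S_d/J\cong A$. Restricting a homomorphism $J\to A$ to the two pieces $I\cdot S_d$ and $(x_5,\dots,x_d)$ of $J$, using the adjunction $\Hom_{S_d}(I\cdot S_d,A)\cong\Hom_S(I,A)$ for $S=S_4\hookrightarrow S_d$, and observing that every $x_j$ with $j>4$ acts as $0$ on $A$ --- so the Koszul relations among $x_5,\dots,x_d$ impose no condition on the images of the $x_j$ --- yields an injection $\Hom_{S_d}(J,A)\hookrightarrow\Hom_S(I,A)\times A^{\,d-4}$. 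Hence $\dim_k\Hom_{S_d}(J,A)\leq 25+8(d-4)=8d-7<8d$, so $[J]\notin R^d_8$ and $H^d_8$ is reducible.

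The crux of the argument is the numerical tangent-space computation in the base case: one has to nail down the graded Betti numbers of a generic Artinian algebra with Hilbert function $(1,4,3)$ --- a small, known instance of the minimal resolution conjecture --- or carry out the graded $\Hom$ computation by hand. Both are routine but a little delicate, which is why invoking \cite{iarrobino-small-tangent-space} is the economical route; once the base case is settled, the passage to all $d\geq 4$ via the cylinder construction is purely formal.
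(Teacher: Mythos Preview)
Your argument is correct and follows the same core idea as the paper: exhibit an ideal with Hilbert function $(1,4,3)$ whose tangent space has dimension $8d-7<8d$. The execution differs in a minor way. The paper works uniformly in~$d$ by writing down the explicit ideal
\[
J=\langle x_1^2,\,x_1x_2,\,x_2^2,\,x_3^2,\,x_3x_4,\,x_4^2,\,x_1x_4+x_2x_3\rangle+\langle x_i\mid 4<i\le d\rangle
\]
and computing $\Hom_S(J,S/J)$ directly as a matrix, obtaining $4+21+8(d-4)=8d-7$; this is self-contained and valid in all characteristics. You instead settle the case $d=4$ by citing \cite{iarrobino-small-tangent-space} for the tangent-space dimension $25$, and then pass to higher~$d$ by the cylinder construction and the injection $\Hom_{S_d}(J,A)\hookrightarrow \Hom_{S_4}(I,A)\times A^{d-4}$. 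Your extension step is clean and perfectly general, but you trade self-containment for the citation; conversely, the paper's single explicit matrix gives the exact value $8d-7$ (not just an upper bound) and later feeds into the description of the component $G^d_8$.
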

\begin{proof}
It is sufficient to find a single ideal whose tangent space dimension is less
than $8d = \dim R_8^d$.  Consider the ideal 
\begin{equation*}
J=\langle x_1^2,x_1x_2,x_2^2,x_3^2,x_3x_4,x_4^2,x_1x_4+x_2x_3\rangle + \langle x_i \mid 4 < i \leq d\rangle
\end{equation*}
The tangent space of $J$ in $H^d_8$ can be computed as $\dim_k \Hom_S(J,S/J)$.  A direct computation shows that an arbitrary element of $\Hom_S(J,S/J)$ can be represented as a matrix
\begin{equation*}
\bordermatrix{
& x_1^2 & x_1x_2 & x_2^2 & x_3^2 & x_3x_4 & x_4^2 & x_1x_4 + x_2x_3 & x_i \cr
1 & 0 & 0 & 0 & 0 & 0 & 0 & 0 & * \cr
x_1 & 2a_2 & a_1 & 0 & 0 & 0 & 0 & a_4 & * \cr
x_2 & 0 & a_2 & 2a_1 & 0 & 0 & 0 & a_3 & * \cr
x_3 & 0 & 0 & 0 & 2a_3 & a_4 & 0 & a_1 & * \cr
x_4 & 0 & 0 & 0 & 0 & a_3 & 2a_4 & a_2 & * \cr
x_1x_3 & * & * & * & * & * & * & * & * \cr
x_1x_4 & * & * & * & * & * & * & * & * \cr
x_2x_4 & * & * & * & * & * & * & * & *
}
\end{equation*}
where $i$ again ranges over $4 < i \leq d$, the $a_i$ are any elements in $k$, and each $*$ represents an independent choice of an element of $k$. Thus, $\dim_k \Hom(J,S/J) = 4 + 21 + 8(d-4) = 8d-7$.  The computation holds in all characteristics.  Since $8d-7<8d=\dim(R^d_8)$, we conclude that $J$ is not smoothable and that $H^d_8$ is reducible.
\end{proof}

\begin{remark}
The above proposition holds with the same proof even when $\ch(k)=2$ or~$3$.
\end{remark}

\subsection{A \texorpdfstring{$\GL_4$}{GL4}-invariant of a system of three quadrics}\label{subsec:gl4}

In this section we study the Pfaffian which appears in the statement of Theorem \ref{thm:waldo}.  Let $G_0$ be the standard graded Hilbert scheme $\mathcal H^4_{(1,4,3)}\cong \Gr(7,S_2)$.  Recall that any $I\in G_0$ defines a $3$-dimensional subspace $I_2^{\perp}\subset S_2^*$.  

Let $Q_1,Q_2,Q_3$ be a basis of quadrics for $I_2^{\perp}$, let $A_1,A_2,A_3$ be the symmetric $4\times 4$ matrices which represent the $Q_i$ via $\vec{y}^tA_i\vec{y}=Q_i$ where $\vec{y}$ is the vector of formal variables $(y_1,y_2,y_3,y_4)$. 
\begin{defn}
The \defi{Salmon-Turnbull Pfaffian} is the Pfaffian (i.e.\ the square root of the determinant) of the following skew-symmetric $12\times 12$ matrix:
$$
M_I=\begin{pmatrix}
0&A_1&-A_2\\
-A_1&0&A_3\\
A_2&-A_3&0
\end{pmatrix}
$$
\end{defn}
\begin{lemma}\label{lem:Newgl4inv}
The Salmon-Turnbull Pfaffian of $M_I$ coincides up to scaling with the Pfaffian of the following skew-symmetric bilinear form:
\begin{align*}
\langle, \rangle_I\colon (S_1\otimes S_2/I_2)^{\otimes 2} &\to  \bigwedge^3 S_2/I_2 \cong k \\
\langle l_1\otimes q_1, l_2\otimes q_2 \rangle_I &= (l_1l_2)\wedge q_1\wedge q_2
\end{align*}
In particular, the vanishing of the Salmon-Turnbull Pfaffian is independent of the choice of basis of $I_2^*$ and invariant under the $\GL_4$ action induced by linear change of coordinates on $S$.
\end{lemma}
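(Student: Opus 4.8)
\emph{Approach.} The plan is to identify the $12\times 12$ matrix $M_I$, up to a nonzero scalar and a permutation of a basis, with the Gram matrix of the skew-symmetric form $\langle,\rangle_I$. Once this is done, the coincidence of Pfaffians up to scaling is immediate from the two standard identities $\operatorname{Pf}(P^{\top}NP)=\det(P)\operatorname{Pf}(N)$ and $\operatorname{Pf}(cN)=c^{6}\operatorname{Pf}(N)$ for a $12\times 12$ skew-symmetric matrix $N$. The ``in particular'' clause then follows formally: the form $\langle,\rangle_I$ is defined with no choice of basis of $I_2^{\perp}$, and its Pfaffian is intrinsically an element of $\bigl(\bigwedge^{3}(S_2/I_2)\bigr)^{\otimes 6}$, so whether it vanishes depends neither on the identification $\bigwedge^{3}(S_2/I_2)\cong k$ nor on a basis; moreover a linear change of coordinates $g$ on $S$ carries $I_2$ to $(gI)_2$, hence $S_2/I_2$ to $S_2/(gI)_2$, and $\langle,\rangle_I$ to $\langle,\rangle_{gI}$ up to the scalar by which $g$ acts on $\bigwedge^{3}(S_2/I_2)$, so the vanishing locus of the Pfaffian is $\GL_4$-invariant.

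\emph{The computation.} First I would note that the differentiation pairing $S_2\times S_2^{*}\to k$ descends to a perfect pairing $(S_2/I_2)\times I_2^{\perp}\to k$, since $I_2^{\perp}$ annihilates $I_2$; thus $I_2^{\perp}=(S_2/I_2)^{*}$. Let $\bar q_1,\bar q_2,\bar q_3\in S_2/I_2$ be the basis dual to $Q_1,Q_2,Q_3$, and use it to identify $\bigwedge^{3}(S_2/I_2)\cong k$ via $\bar q_1\wedge\bar q_2\wedge\bar q_3\mapsto 1$. For $1\le a,b\le 4$ the class of $x_ax_b$ in $S_2/I_2$ is $\overline{x_ax_b}=\sum_{k}\langle x_ax_b,Q_k\rangle\,\bar q_k$, and with the convention $\vec y^{\,\top}A_k\vec y=Q_k$ the pairing gives $\langle x_ax_b,Q_k\rangle=2(A_k)_{ab}$. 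Substituting into the definition of $\langle,\rangle_I$ on the basis $\{x_a\otimes\bar q_i\}_{1\le a\le 4,\,1\le i\le 3}$ of $S_1\otimes S_2/I_2$ yields
\begin{equation*}
\langle x_a\otimes\bar q_i,\ x_b\otimes\bar q_j\rangle_I
=\sum_{k}\langle x_ax_b,Q_k\rangle\,\bar q_k\wedge\bar q_i\wedge\bar q_j
=2\sum_{k}(A_k)_{ab}\,\varepsilon_{kij},
\end{equation*}
where $\varepsilon_{kij}$ is the sign of $(k,i,j)$ (and $0$ if two indices agree). Read off as a $3\times 3$ array of symmetric $4\times 4$ blocks indexed by $(i,j)$, this says that the Gram matrix $N$ of $\langle,\rangle_I$ has vanishing diagonal blocks and off-diagonal blocks $\pm 2A_k$ — precisely the data of $M_I$, with the three blocks permuted and some signs changed. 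Such a signed block permutation is conjugation of $M_I$ by a signed permutation matrix $P$ with $\det P=\pm 1$; with the ordering above one checks $N=-2\,P^{\top}_{(13)}M_I\,P_{(13)}$, so $\operatorname{Pf}(N)=64\,\operatorname{Pf}(M_I)$, which proves the main assertion.

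\emph{Invariance and the main obstacle.} Replacing $Q_1,Q_2,Q_3$ by $Q'_i=\sum_j g_{ij}Q_j$ replaces each $A_i$ by $\sum_j g_{ij}A_j$ and, in the Gram-matrix picture, changes the dual basis of $S_2/I_2$ by $g^{-\top}$ and the identification $\bigwedge^{3}(S_2/I_2)\cong k$ by $\det g$; by the same two Pfaffian identities this only rescales $\operatorname{Pf}(M_I)$ by a nonzero scalar, so its vanishing is basis-independent, and $\GL_4$-invariance of the vanishing locus follows as in the first paragraph. There is no conceptual difficulty here; the work is bookkeeping — one must track the signs and the factor $2$ carefully enough to be sure that the Gram matrix is genuinely a signed block-permutation of $2M_I$ rather than some unrelated skew form with the same block entries, and in the invariance statement one must remember that a change of basis of $I_2^{\perp}$ simultaneously changes the induced identification of $\bigwedge^{3}(S_2/I_2)$ with $k$, so those two effects have to be combined before invoking $\operatorname{Pf}(P^{\top}NP)=\det(P)\operatorname{Pf}(N)$.
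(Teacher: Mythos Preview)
Your proposal is correct and follows essentially the same approach as the paper: compute the Gram matrix of $\langle,\rangle_I$ in a basis of the form $\{x_a\otimes m_i\}$ and recognize it as a block matrix built from the $A_k$. The only difference is bookkeeping: the paper takes the $m_i$ dual to $\tfrac{1}{2}Q_i$ and orders them so that the Gram matrix is literally $M_I$, whereas you take the basis dual to $Q_i$ with the natural ordering and obtain $-2P_{(13)}^{\top}M_IP_{(13)}$, then invoke the Pfaffian identities to absorb the scalar and permutation.
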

\begin{proof}
Let $m_1, m_2, m_3$ be any basis of $S_2/I_2$ and let $x_1, x_2, x_3, x_4$ be a basis for $S_1$. Let $A_i$ be the matrix representation with respect to this basis of the symmetric bilinear form obtained by composing multiplication $S_1 \otimes S_1 \rightarrow S_2/I_2$ with projection onto $m_i$. Note that if $m_1, m_2, m_3$ form a basis dual to $\frac{1}{2}Q_1, \frac{1}{2}Q_2, \frac{1}{2}Q_3$ then this definition of $A_i$ agrees with the the definition of $A_i$ above. Thus, $x_j x_{j'} = \sum_i (A_i)_{j j'} m_i$ where $(A_i)_{j j'}$ is the $(j,j')$ entry of $A_i$.  Then we will use the basis $x_1 \otimes m_3, x_2 \otimes m_3, \ldots, x_4 \otimes m_1$ for $S_1 \otimes S_2/I_2$. We compute the matrix representation of $\langle,\rangle_I$ in this basis:
\begin{align*}
\langle x_j \otimes m_i, x_{j'} \otimes m_{i'} \rangle_I &= (x_j x_{j'}) \wedge m_i \wedge m_{i'} \\
&= \left(\sum_{1 \leq \ell \leq 3} (A_\ell)_{jj'} m_\ell \right) \wedge m_i \wedge m_i' \\
\intertext{If $i = i'$, this quantity will be zero. Otherwise, let $i''$ be the index which is not $i$ or $i'$ and then we get} 
&= (A_{i''})_{jj'} m_{i''} \wedge m_i \wedge m_{i'}
= \pm (A_{i''})_{jj'} m_1 \wedge m_2 \wedge m_3 
\end{align*}
where $\pm$ is the sign of the permutation which sends $1, 2, 3$ to $i'', i, i'$ respectively. Thus, with $m_1 \wedge m_2 \wedge m_3$ as the basis for $\bigwedge^3 S_2/I_2$,  $\langle,\rangle_I$ is represented as:
\begin{equation*}
\begin{pmatrix}
0 & A_1 & -A_2 \\
-A_1 & 0 & A_3 \\
A_2 & -A_3 & 0
\end{pmatrix} \qedhere
\end{equation*}
\end{proof}

Since the vanishing of the Salmon-Turnbull Pfaffian depends only on the vector subspace $I_2^\perp \subset S_2^*$, it defines a function $P$ on $G_0$ which is homogeneous of degree $2$ in the Pl\"ucker coordinates.  We next show that the Salmon-Turnbull Pfaffian is irreducible and that, over the complex numbers, it is uniquely determined by its degree and $\GL_4$-invariance.

\begin{lemma}\label{lem:mindeg}
There are no polynomials of degree $1$ in the Pl\"ucker coordinates of $G_0$ whose vanishing locus is invariant under the action of the algebraic group $\GL_4$.   Therefore, the Salmon-Turnbull Pfaffian is irreducible.
\end{lemma}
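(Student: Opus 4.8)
The plan is to rule out a degree-$1$ invariant by promoting any linear form in the Plücker coordinates with $\GL_4$-invariant zero locus to an actual $\GL_4$-eigenvector in the space of linear forms, and then eliminating all such eigenvectors with a one-line weight computation; the irreducibility of the degree-$2$ Pfaffian $P$ will follow formally from this.

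First I would set up notation: write $S_1 \cong k^4$ for the standard $\GL_4$-representation, $S_2 = \Sym_2 S_1$, and view $G_0 \cong \Gr(3, S_2^*)$ in its Plücker embedding $G_0 \hookrightarrow \mathbb{P}(\bigwedge^3 S_2^*)$, so that the space of linear forms in the Plücker coordinates is the $\GL_4$-representation $\bigwedge^3 S_2$. The key reduction is the claim that if $\ell \neq 0$ is a linear form whose zero locus $V(\ell)\cap G_0$ is $\GL_4$-invariant, then the line $k\ell$ is $\GL_4$-stable. To prove this I would invoke that $G_0$ is a Grassmannian: it is linearly nondegenerate, and $\Pic(G_0) = \mathbb{Z}$ is generated by the ample Plücker class $\mathcal{O}(1)$. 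Hence for $\ell \neq 0$ the divisor $D_\ell := \operatorname{div}(\ell|_{G_0})$ lies in $|\mathcal{O}(1)|$, and because $\mathcal{O}(1)$ is the minimal positive class, $D_\ell$ must be a prime divisor (a decomposition $D_\ell = D' + D''$ into nonzero effective divisors would give $[D_\ell] \ge 2\,\mathcal{O}(1)$); in particular $D_\ell$ is reduced and is therefore recovered from its support $V(\ell)\cap G_0$. Since $G_0$ is connected and projective, $D_\ell = D_{\ell'}$ forces $\ell' \in k\ell$. Applying this with $\ell' = g\cdot\ell$ and using the assumed $\GL_4$-invariance of $V(\ell)\cap G_0$ yields $g\cdot\ell \in k\ell$ for all $g\in\GL_4$, as claimed.

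The weight computation then finishes the first assertion. A $\GL_4$-stable line in any representation is acted on by a character of $\GL_4$, necessarily $\det^m$, whose torus weight $(m,m,m,m)$ has coordinate-sum $4m$. But every weight of $S_2 = \Sym_2(k^4)$ is $\epsilon_a + \epsilon_b$ with $1 \le a \le b \le 4$, of coordinate-sum $2$, so every weight occurring in $\bigwedge^3 S_2$ has coordinate-sum $6$; since $4m = 6$ has no integer solution, $\bigwedge^3 S_2$ admits no $\GL_4$-stable line, and hence no degree-$1$ Plücker polynomial has $\GL_4$-invariant vanishing locus. For irreducibility of $P$: it is a nonzero form of degree $2$ in the Plücker coordinates whose vanishing locus $V(P)\cap G_0$ is $\GL_4$-invariant by Lemma~\ref{lem:Newgl4inv}. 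If $P = \ell_1\ell_2$ with $\ell_i$ linear and $\ell_1 \in k\ell_2$, then $V(P)\cap G_0 = V(\ell_1)\cap G_0$ is a $\GL_4$-invariant zero locus of a nonzero linear form, which we have just excluded. If instead $\ell_1 \notin k\ell_2$, the divisors $D_i := V(\ell_i)\cap G_0$ are two distinct prime divisors (distinct by the injectivity of $\ell\mapsto D_\ell$ up to scalars), and since $\GL_4$ is connected it must fix each irreducible component of the invariant set $D_1 \cup D_2 = V(P)\cap G_0$; in particular $D_1$ is $\GL_4$-invariant, again a contradiction. Hence $P$ is irreducible.

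The step I expect to be the real obstacle is the reduction ``$\GL_4$-invariant zero locus $\Rightarrow$ $\GL_4$-eigenvector'', specifically the fact that a Plücker hyperplane section of the Grassmannian is a prime (hence reduced) divisor recovered from its support; the $\Pic(G_0) = \mathbb{Z}$ argument sketched above is the cleanest route I see. Once that is in hand, the weight count and the factorization argument are routine.
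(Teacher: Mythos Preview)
Your proof is correct and follows essentially the same approach as the paper: both reduce the question to showing that $\bigwedge^3 S_2$ has no one-dimensional $\GL_4$-subrepresentation, then rule this out by a torus-weight count (every weight has coordinate sum $6$, while a character $\det^m$ has weight sum $4m$), and finally deduce irreducibility of $P$ from the absence of linear invariants. Your treatment is somewhat more careful than the paper's in justifying the reduction ``invariant zero locus $\Rightarrow$ $\GL_4$-eigenvector'' via $\Pic(G_0)=\mathbb{Z}$ and primality of Pl\"ucker hyperplane sections, and in the case split for the factorization of $P$; the paper handles these points more tersely but the argument is the same.
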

\begin{proof} We may prove this lemma by passing to the algebraic closure, and we thus assume that $k$ is algebraically closed.  Let $W=\bigwedge^3 S_2^*$ and consider the Pl\"ucker embedding of $\Gr(3,S_2^*)$ in $\mathbb{P}(W)=\Proj(R)$ where $R$ is the polynomial ring $k[p_{ij\ell}]$ where $\{i,j,\ell\}$ runs over all unordered triplets of monomials in $S_2^*$. The Pl\"ucker coordinate ring $A$ is the quotient of $R$ by a homogeneous ideal $J$.  In each degree $e$, we obtain a split exact sequence of $\GL_4$-representations:
\[0\rightarrow J_e\rightarrow \Sym_e(W)\rightarrow A_e\rightarrow 0\]
Since $J_1=0$ we have $\Sym_1(W)=A_1$, and it suffices to show that this has no $1$-dimensional subrepresentations.  Given a monomial $i\in S_2^*$ let $\alpha_i\in \mathbb N^4$ be its multi-index. For $\theta=(\theta_1, \ldots, \theta_4)$, let $L$ be the diagonal matrix with $L_{mm}=\theta_m$. The action of $L$ on the Pl\"ucker coordinate $p_{ij\ell}$ is to scale it by $\theta^{\alpha_i+\alpha_j+\alpha_\ell}$. 

Suppose that there exists an invariant polynomial $F=\sum c_{ij\ell}p_{ij\ell}$ in ${\Sym}_1(W)$.  Then $L\cdot F=\lambda F$ for some $\lambda\in k^{\times}$. But since $L\cdot F=\sum c_{ij\ell}\theta^{\alpha_i + \alpha_j + \alpha_\ell} p_{ij\ell}$ it follows that whenever $c_{ij\ell}$ and $c_{i'j'\ell'}$ are both nonzero, then $\alpha_i + \alpha_j + \alpha_\ell = \alpha_{i'} + \alpha_{j'} + \alpha_{\ell'}$.  However there are no multi-indices of total degree $6$ which are also symmetric in $\theta_1, \theta_2, \theta_3, \theta_4$.  Thus each $c_{ij\ell}=0$ and there are no nontrivial $\GL_4$-invariant polynomials of degree $1$. In particular no product of linear polynomials is $\GL_4$-invariant, and thus the Salmon-Turnbull Pfaffian is irreducible.
\end{proof}

\begin{lemma}\label{lem:mindeg2}
If $k=\mathbb{C}$ then there is exactly one polynomial of degree $2$ in the Pl\"ucker coordinates, whose vanishing locus is $\GL_4$-invariant, namely the Salmon-Turnbull Pfaffian.
\end{lemma}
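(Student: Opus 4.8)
Here is how I would prove Lemma~\ref{lem:mindeg2}.

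\smallskip

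The plan is to reduce the statement to a multiplicity count in representation theory and then carry out that count. First I would record that a degree-$2$ polynomial $F$ in the Pl\"ucker coordinates of $G_0\cong\Gr(3,S_2^*)$ whose vanishing locus is $\GL_4$-invariant is, up to a scalar, a single irreducible $\GL_4$-semi-invariant in the degree-$2$ part $A_2$ of the Pl\"ucker coordinate ring $A$. Indeed, $A$ is a graded unique factorization domain (the cone over a Grassmannian, whose Picard group is $\mathbb Z\cdot\Osh(1)$), so $V(F)$ is the union of the hypersurfaces cut out by the irreducible factors of $F$; since $\GL_4$ is connected it cannot permute these hypersurfaces, so each irreducible factor has $\GL_4$-invariant vanishing locus and is hence a semi-invariant. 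By Lemma~\ref{lem:mindeg} no semi-invariant has degree $1$, so $F$ itself is (a scalar times) an irreducible degree-$2$ semi-invariant. Since the scalar matrices act on all of $A_2$ through a single character, a $\GL_4$-semi-invariant in $A_2$ is exactly an $\operatorname{SL}_4$-invariant; and since the Salmon--Turnbull Pfaffian $P$ is such a semi-invariant by Lemma~\ref{lem:Newgl4inv}, is of degree $2$, and is not identically zero (one checks directly that $\det M_I\neq 0$ for a suitable explicit net $I$), the lemma becomes the assertion $\dim_k(A_2)^{\operatorname{SL}_4}=1$.

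\smallskip

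To pin down $(A_2)^{\operatorname{SL}_4}$ I would use two inputs. Exactly as in the proof of Lemma~\ref{lem:mindeg}, restricting to the diagonal torus forces any such invariant to be supported on products $p_{ij\ell}p_{i'j'\ell'}$ of a single torus weight, and the only multiple of $(1,1,1,1)$ of total degree $2\cdot 6=12$ is $(3,3,3,3)$. On the other hand, the standard description of the homogeneous coordinate ring of a Grassmannian gives $A_2\cong\mathbb S_{(2,2,2)}(S_2)$ as a representation of $\GL(S_2)\supseteq\GL_4$, where $S_2=\Sym_2 V$ with $V=S_1\cong k^4$ the standard $\GL_4$-module. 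Combining these, $\dim(A_2)^{\operatorname{SL}_4}$ is the multiplicity of the determinant representation $\det^{3}=\mathbb S_{(3,3,3,3)}(V)$ in the plethysm $\mathbb S_{(2,2,2)}(\Sym_2 V)$.

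\smallskip

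I would compute this multiplicity from the dual Jacobi--Trudi identity $s_{(2,2,2)}=e_3^2-e_2e_4$, which yields the equality of virtual $\GL_4$-characters
\[
\mathbb S_{(2,2,2)}(\Sym_2 V)=\bigl({\textstyle\bigwedge}^3\Sym_2 V\bigr)^{\otimes 2}\;\ominus\;\bigl({\textstyle\bigwedge}^4\Sym_2 V\bigr)\otimes\bigl({\textstyle\bigwedge}^2\Sym_2 V\bigr).
\]
Over $V=k^4$ one has the classical multiplicity-free decompositions ${\bigwedge}^2\Sym_2 V=\mathbb S_{(3,1)}V$, ${\bigwedge}^3\Sym_2 V=\mathbb S_{(4,1,1)}V\oplus\mathbb S_{(3,3)}V$, and ${\bigwedge}^4\Sym_2 V=\mathbb S_{(5,1,1,1)}V\oplus\mathbb S_{(4,3,1)}V$, each checked by comparing dimensions and highest weights. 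Since the target $\mathbb S_{(3,3,3,3)}V$ corresponds to the $4\times 3$ rectangle, the Littlewood--Richardson coefficient $c^{(3,3,3,3)}_{\mu\nu}$ is $1$ when $\mu$ fits in that rectangle and $\nu$ is its complement, and $0$ otherwise. Running through the summands above, the only contribution is the self-complementary pair $\mathbb S_{(3,3)}V\otimes\mathbb S_{(3,3)}V$ inside the first tensor square, and the second term contributes nothing; hence the multiplicity is $1-0=1$. Therefore $\dim(A_2)^{\operatorname{SL}_4}=1$, spanned by $P$, which is the lemma.

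\smallskip

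The main obstacle is this last computation; everything before it is formal bookkeeping. The hypothesis $k=\mathbb C$ is used precisely where semisimplicity enters: in identifying semi-invariants with $\operatorname{SL}_4$-invariants, in treating $A_2$ as a direct summand of $\Sym_2 W$, and in the Schur-functor/Littlewood--Richardson calculus. (The multiplicity could equally well be obtained as an alternating sum over the Weyl group $S_4$ of weight-space dimensions of $\mathbb S_{(2,2,2)}(\Sym_2 V)$, or simply verified by computer.)
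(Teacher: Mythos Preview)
Your argument is correct and reaches the same conclusion as the paper, but by a genuinely different route. The paper works upstairs in $\Sym_2(W)$ with $W=\bigwedge^3 S_2^*$: it simply writes out the full decomposition of the $\GL_4$-character of $\Sym_2(W)$ into Schur functions (an eighteen-term expression) and observes that $s_{(3,3,3,3)}$ occurs with multiplicity one; since $A_2$ is a $\GL_4$-summand of $\Sym_2(W)$ and already contains the Pfaffian, uniqueness follows. You instead identify $A_2$ directly as $\mathbb S_{(2,2,2)}(\Sym_2 V)$ via the Borel--Weil description of the Pl\"ucker coordinate ring, and then extract the multiplicity of $(\det V)^{\otimes 3}$ using the dual Jacobi--Trudi identity $s_{(2,2,2)}=e_3^2-e_2e_4$, the classical decompositions of $\bigwedge^k\Sym_2 V$ for $k=2,3,4$, and the rectangle rule for Littlewood--Richardson coefficients. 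Your approach is more structural and sidesteps the large brute-force character tabulation, at the cost of invoking several auxiliary facts (the Borel--Weil identification, three small plethysms, and the LR rectangle rule); the paper's approach needs only the single character computation, but that computation is substantial. Both are valid and both yield multiplicity one.
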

\begin{proof}
We take the same notation as in the proof of Lemma \ref{lem:mindeg} and recall that we have a split exact sequence of $\GL_4(\mathbb C)$-representations:
\begin{equation*}
0 \rightarrow J_2 \rightarrow \Sym_2(W) \rightarrow A_2 \rightarrow 0
\end{equation*}
We determine the irreducible subrepresentations of $\Sym_2(W)$ by computing the following Schur function decomposition of its character $\chi$:
\[\chi=s_{(8,2,2)}+s_{(7,4,1)}+2s_{(7,3,1,1)}+s_{(7,2,1,1)}+s_{(6,6)}+3s_{(6,4,2)}+s_{(6,4,1,1)}+\]
\[2s_{(6,4,1,1)}+2s_{(6,3,2,1)}+s_{(6,2,2,2)}+2s_{(5,5,1,1)}+s_{(5,4,3)}+s_{(5,4,2,1)}+s_{(5,3,3,1)}+s_{(4,4,4)}+\]
\[s_{(4,4,3,1)}+2s_{(4,4,2,2)}+s_{(3,3,3,3)}\]
We conclude that ${\Sym}_2(W)$ contains a unique $1$-dimensional subrepresentation with character $s_{(3,3,3,3)}$. It follows from this and Lemma \ref{lem:Newgl4inv} that, over $\mathbb C$, the Salmon-Turnbull Pfaffian is the only $\GL_4$-invariant of degree $2$ in the Pl\"ucker coordinates.
\end{proof}

\begin{remark}
Salmon gives a geometric description of the Salmon-Turnbull Pfaffian~\cite[pp.242-244]{salmon}, where he shows that the Pfaffian vanishes whenever there exists a cubic form $C$ and three linear differential operators $d_1, d_2, d_3$ such that $d_iC=Q_i$. Turnbull also describes this invariant in his study of ternary quadrics~\cite{turnbull}.
\end{remark}

\subsection{The irreducible components of \texorpdfstring{$H^4_8$}{H48}}\label{subsec:h48}
Consider $H_8^4$ with its associated ideal sheaf $\mathcal I$ 
and let $\mathcal A = \mathcal O_{H_8^4}[x_1,
\ldots, x_4]/\mathcal I$. On each open affine $U = \Spec B$ such that $\mathcal
A|_U$ is free, define $f_i \in B$ to be $\frac{1}{8}\operatorname{tr}(X_i)$ where $X_i$ is the operator on the free $B$-module $\mathcal A(U)$ defined by multiplication by $x_i$.
We think of the $f_i$ as being the ``center of mass'' functions for the
subscheme of $\mathbb A_B^4$ defined by $\mathcal I|_U$.  Note that the definitions of $f_i$
commute with localization and thus they lift to define elements $f_i \in
\Gamma(H_8^4, \mathcal O_{H_8^4})$, which determine a morphism $f\colon
H_8^4 \rightarrow \mathbb A^4$.

Considered as an additive group, $\mathbb A^4$ acts on $H_8^4$
by translation. We define the ``recentering'' map $r$ to be the composition
\begin{equation*}
r\colon H_8^4 \stackrel{-f \times \operatorname{id}}{\longrightarrow} \mathbb
A^4 \times H_8^4 \longrightarrow H_8^4
\end{equation*}

By forgetting about the grading of ideals, we have a
closed immersion $\iota$ of $\Gr(7,S_2)\cong \mathcal H^4_{(1,4,3)}$ into $H^4_8$~\cite[Prop~1.5]{haiman-sturmfels}. We define $G$ to
be the preimage of this closed subscheme via the ``recentering'' map, i.e. the
fiber product:
\begin{equation*}
\begin{CD}
G @>>> H_8^4 \\
@VVV @VVrV \\
\Gr(7,S_2) @>\iota >> H_8^4
\end{CD}
\end{equation*}
We define intersections $W:=G\cap R^4_8$ and $W_0:=G_0\cap R^4_8$.  We will focus on $W_0$, and the following lemma shows that this is sufficient for describing $W$.

\begin{lemma} \label{lem:G0andG}
We have isomorphisms $G\cong \Gr(7,S_2) \times \mathbb A^4$ and $W\cong
W_0\times \mathbb A^4$.
\end{lemma}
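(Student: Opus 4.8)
The plan is to prove Lemma~\ref{lem:G0andG} by exhibiting the recentering map $r$ as an $\mathbb{A}^4$-equivariant deformation retraction onto $G_0$, so that the fiber product defining $G$ splits as a trivial bundle over $G_0$. First I would analyze the map $r\colon H_8^4 \to H_8^4$: by construction $r$ is the composition of $x \mapsto (-f(x), x)$ with the translation action $\mathbb{A}^4 \times H_8^4 \to H_8^4$. The key observation is that $f$ is a ``center of mass'' function, so it is equivariant for the translation action in the sense that $f(v + x) = v + f(x)$ for $v \in \mathbb{A}^4$ (translating a length-$8$ subscheme by $v$ shifts the average of the $x_i$-coordinates by $v_i$); hence $f \circ r = 0$, meaning $r$ lands in the locus $\{f = 0\}$, and moreover $r$ restricted to $\{f=0\}$ is the identity. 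Thus $r$ is a retraction of $H_8^4$ onto the closed subscheme $\{f = 0\}$, and $G_0 = \iota(\Gr(7,S_2))$ lies inside $\{f=0\}$ because a homogeneous ideal is supported at the origin (its center of mass is $0$).

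Next I would build the isomorphism $G \cong \Gr(7,S_2) \times \mathbb{A}^4$ explicitly. The action map $a\colon \mathbb{A}^4 \times H_8^4 \to H_8^4$, $(v, x) \mapsto v + x$, is an isomorphism onto its image when restricted to $\mathbb{A}^4 \times \{f = 0\}$, with inverse $x \mapsto (f(x), -f(x) + x) = (f(x), r(x))$; in fact since translation is a free action of the group $\mathbb{A}^4$ on all of $H_8^4$ and $\{f=0\}$ is a section of the quotient, we get $H_8^4 \cong \mathbb{A}^4 \times \{f = 0\}$ with $r$ corresponding to the second projection. Restricting along $\iota\colon \Gr(7,S_2) \hookrightarrow \{f=0\} \subset H_8^4$, the fiber product $G = H_8^4 \times_{r, H_8^4, \iota} \Gr(7,S_2)$ becomes $(\mathbb{A}^4 \times \{f=0\}) \times_{\{f=0\}} \Gr(7,S_2) \cong \mathbb{A}^4 \times \Gr(7,S_2)$, as desired. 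Under this identification, a point of $G$ is a pair $(v, I_0)$ with $I_0$ a homogeneous ideal with Hilbert function $(1,4,3)$, mapping to the translated ideal $v + I_0 \in H_8^4$.

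For the second isomorphism $W \cong W_0 \times \mathbb{A}^4$, I would use that the smoothable component $R_8^4$ is stable under the translation action of $\mathbb{A}^4$ (translating distinct points gives distinct points, and the action is by automorphisms of $H_8^4$ hence preserves irreducible components and closures). Therefore, under the isomorphism $G \cong \mathbb{A}^4 \times \Gr(7,S_2)$ sending $(v, I_0)$ to $v + I_0$, the intersection $W = G \cap R_8^4$ consists exactly of those $(v, I_0)$ with $v + I_0 \in R_8^4$, equivalently (by translation-invariance of $R_8^4$) with $I_0 \in R_8^4$, i.e.\ $I_0 \in G_0 \cap R_8^4 = W_0$. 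Hence $W \cong \mathbb{A}^4 \times W_0$.

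The main obstacle I anticipate is justifying the scheme-theoretic (not merely set-theoretic) statements: that $f$ is genuinely translation-equivariant as a morphism of schemes, that $\{f=0\}$ is the scheme-theoretic section making the translation action split, and that the fiber product identification is an isomorphism of schemes rather than just of underlying sets. This requires checking the trace functions $f_i$ behave correctly in families over an arbitrary base $B$ on each affine chart where $\mathcal{A}|_U$ is free, using that $\operatorname{tr}(X_i + v_i \cdot \operatorname{id}) = \operatorname{tr}(X_i) + 8 v_i$ since $\mathcal{A}$ has rank $8$; the factor of $\frac{1}{8}$ (legitimate since $\operatorname{char} k \neq 2$, in fact here we only need $8$ invertible) then makes $f$ equivariant on the nose. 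Everything else is formal manipulation of fiber products and the universal property of the Hilbert scheme.
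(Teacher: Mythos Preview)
Your proposal is correct and follows essentially the same approach as the paper: both arguments hinge on the trace computation $\operatorname{tr}(X_i + v_i\cdot\operatorname{id}) = \operatorname{tr}(X_i) + 8v_i$ together with the fact that multiplication by $x_i$ has trace zero on a graded algebra with Hilbert function $(1,4,3)$, and both then use translation to build mutually inverse maps. Your packaging is slightly cleaner in that you first establish the global splitting $H_8^4 \cong \mathbb{A}^4 \times \{f=0\}$ via the section $\{f=0\}$ of the free $\mathbb{A}^4$-action and then obtain $G \cong \mathbb{A}^4 \times \Gr(7,S_2)$ by base change, whereas the paper constructs the maps $\phi$ and $\psi$ directly between $G$ and $\mathbb{A}^4 \times \Gr(7,S_2)$ and verifies they are inverse by an explicit coordinate computation on affine charts; but the content is the same.
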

\begin{proof}
We have a map $G \rightarrow \Gr(7,S_2)$, and 
a map $f\colon H^4_8 \rightarrow \mathbb A^4$. We claim that the
induced map $\phi\colon G \rightarrow \mathbb A^4 \times \Gr(7,S_2)$ is an
isomorphism.

Define $\psi\colon \mathbb A^4 \times \Gr(7,S_2) \rightarrow H_8^4$ to be the
closed immersion $\iota$ followed by translation.  We work with an open
affine $U \isom \Spec A \subset \Gr(7,S_2)$ such that the restriction
$\iota|_{U}$ corresponds to an ideal $I \subset A[x_1, \ldots, x_4]$ whose
cokernel is a graded $A$-module with free components of ranks $(1,4,3)$. The map
$\psi|_{\mathbb A^4 \times U}$ corresponds to the ideal $I' \subset A[t_1,
\ldots, t_4][x_1',\ldots,x_4']$ where $I'$ is the image of $I$ under the
homomorphism of $A$-algebras that sends $x_i$ to $x_i' + t_i$.  Then
$A[t_1,\ldots,t_4][x_1',\ldots,x_4']/I' \isom (A[x_1, \ldots, x_4]/I)[t_1,
\ldots, t_4]$ is a graded $A[t_1,\ldots,t_4]$-algebra with $x_i' - t_i = x_i$
homogeneous of degree $1$.  The key point is that as operators on a free $A[t_1,
\ldots, t_4]$-module, the $x_i$ have trace zero, so  the trace of the $x_i'$ is
$8t_i$. Thus, $r\circ \psi\colon\mathbb A^4 \times \Gr(7,S_2) \rightarrow H_8^4$
corresponds to an ideal $I'' \subset A[t_1, \ldots, t_4][x_1'', \ldots, x_4'']$
which is the image of $I'$ under the homomorphism that takes $x_i'$ to $x_i'' -
t_i$. This is of course the extension of $I\subset A[x_1, \ldots, x_4]$ to
$A[t_1, \ldots, t_4][x_1'', \ldots, x_4'']$ with $x_i'' = x_i$, and so $I''$ has
the required properties such that $r\circ \psi$ factors through the closed
immersion $\iota$.  Thus, $\psi$ maps to $G$.  Furthermore, we see that $r\circ
\psi$ is the projection onto the first coordinate of $\mathbb A^4 \times
\Gr(7,S_2)$ and $f \circ \psi$ is projection onto the second coordinate. Thus,
$\phi \circ \psi$ is the identity.

Second, we check that the composition $\psi \circ \phi$ is the identity on $G$.
This is clear because the composition amounts to translation by $-f$ followed by
translation by $f$.

The isomorphism for $W$ follows by the same argument.
\end{proof}

\begin{lemma}\label{lem:codim1} 
$W$ and $W_0$ are prime divisors in $G$ and $G_0$ respectively.
\end{lemma}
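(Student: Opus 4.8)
The plan is to reduce, via Lemma~\ref{lem:G0andG}, to showing that $W_0$ is a prime divisor in $G_0$: since $G\cong G_0\times\mathbb A^4$ and $W\cong W_0\times\mathbb A^4$, the assertion for $W$ follows formally from the one for $W_0$. Recall that $G_0=\mathcal H^4_{(1,4,3)}\cong\Gr(7,S_2)$ is smooth and irreducible of dimension $21$, and that $W_0=G_0\cap R^4_8$ is a closed subset of $G_0$ by construction. So what must be shown is that $W_0$ is a proper nonempty subset of $G_0$ — which gives codimension at least one — and, the real point, that it is irreducible of dimension exactly $20$.

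The easy bounds come first. The set $W_0$ is nonempty because the monomial ideal $\lideal x_1,x_2\rideal\cdot\m+\m^3$ is homogeneous with Hilbert function $(1,4,3)$, hence lies in $G_0$, and is smoothable by Proposition~\ref{prop:monomial-radical}. It is a proper subset because the ideal $J$ of Proposition~\ref{prop:h48red} is homogeneous with Hilbert function $(1,4,3)$, so $J\in G_0$, while $\dim_k\Hom_S(J,S/J)=8\cdot4-7=25<32=\dim R^4_8$, so $J$ is not smoothable. Since $G_0$ is irreducible, every irreducible component of $W_0$ then has codimension at least $1$ in $G_0$.

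The hard part is to show $W_0$ is irreducible of dimension $20$. I would do this by exhibiting an irreducible $20$-dimensional family of smoothable homogeneous $(1,4,3)$ ideals that is dense in $W_0$. The natural candidate — and the one that makes contact with the statement of Theorem~\ref{thm:waldo} — is the apolarity family underlying the Salmon--Turnbull Pfaffian (cf.\ the Remark after Lemma~\ref{lem:mindeg2}): the closure of the image of the rational map $\mathbb P(S_3^*)\times\Gr(3,S_1)\dasharrow\Gr(3,S_2^*)\cong G_0$ sending a cubic $C$ and a $3$-dimensional space $D$ of linear differential operators to the span $\lideal d\cdot C\mid d\in D\rideal$ (on the locus where this span has dimension $3$). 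Three things must be checked: that this image has dimension $20$ (a dimension count, using that the polar map $S_1\to S_2^*$, $d\mapsto d\cdot C$, is injective for generic $C$); that every ideal in the image is smoothable (e.g.\ by a concrete Gr\"obner degeneration to an apolar Gorenstein algebra of a cubic in three variables, which has Hilbert function $(1,3,3,1)$ and is smoothable by Proposition~\ref{prop:1331-smoothable}); and that, conversely, every smoothable homogeneous $(1,4,3)$ ideal lies in this closure, using $W_0\subset R^4_8$ and a limiting argument on $8$ distinct points collapsing to the origin. Granting these, $W_0$ is the closure of an irreducible $20$-dimensional set, hence a prime divisor in $G_0$, and therefore $W\cong W_0\times\mathbb A^4$ is a prime divisor in $G$.

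I expect the main obstacle to be exactly the matching in the previous paragraph — showing both that the apolarity family lands in $R^4_8$ and that $R^4_8\cap G_0$ is contained in its closure, with the dimension count tight enough to conclude $20$. The essential leverage is $\GL_4$-equivariance: $G_0$, $W_0$, and the apolarity family are all $\GL_4$-stable, so one can reduce to a small list of orbit representatives, and this is where the invariant-theoretic results of Section~\ref{subsec:gl4} enter. Care is needed not to invoke the eventual equality $W_0=V(P)$ of Theorem~\ref{thm:waldo} here, since that identification logically comes after this lemma.
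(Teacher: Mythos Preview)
Your reduction to $W_0$ via Lemma~\ref{lem:G0andG} and the easy bounds (nonempty via a monomial ideal, proper via the ideal $J$ of Proposition~\ref{prop:h48red}) are fine. The gap is exactly where you flag it: the step ``every smoothable homogeneous $(1,4,3)$ ideal lies in the closure of the apolarity family'' is not a simple limiting argument on collapsing points---it is essentially the content of Theorem~\ref{thm:waldo}, whose proof in the paper \emph{uses} the present lemma together with the invariant theory of Section~\ref{subsec:gl4} and the degree computation of Section~\ref{subsec:upper-bound-degree}. So your plan is either circular or defers the whole difficulty to an unproved claim. Without that containment you only get that \emph{some} component of $W_0$ is a divisor (from the apolarity family), not that $W_0$ is irreducible.

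The paper's argument sidesteps this with two short ideas you are missing. For irreducibility: on each monomial chart $U_\lambda$ with $M_\lambda\in G_0$, taking the weight-$(1,1,1,1)$ initial ideal gives a retraction $\pi\colon U_\lambda\to G_0\cap U_\lambda$, and $W_0\cap U_\lambda=\pi(R^4_8\cap U_\lambda)$ because Gr\"obner degeneration stays in the closed set $R^4_8$ while $\pi$ fixes $G_0$; thus $W_0$ is locally the image of the integral scheme $R^4_8$ under a morphism, hence integral. For codimension exactly~$1$: pick a smoothable monomial $(1,4,3)$ ideal $I\in W$ with $33$-dimensional tangent space, so that near $I$ the Hilbert scheme embeds in a smooth $33$-fold $Y$; then $R^4_8$ and $G$ have codimensions $1$ and $8$ in $Y$, and subadditivity of codimension of intersections forces every component of $W=R^4_8\cap G$ through $I$ to have codimension at most $9$ in $Y$, i.e.\ at most $1$ in $G$. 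No description of the intersection locus is needed here; the identification with the apolarity/Pfaffian locus is the \emph{output} of the later sections, not an input.
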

\begin{proof} 
The point $I=\langle x_1^2, x_1x_2, x_2^2, x_3^2, x_3x_4, x _4^2, x_1x_4\rangle$ belongs to $R^4_8$ and to $G$ and has a $33$-dimensional tangent space in any characteristic.  As a result, an open set $U$ around $I$ in the Hilbert scheme is a closed subscheme of a smooth $33$-dimensional variety $Y$.  By the subadditivity of codimension of intersections, as in \cite[Thm 17.24]{harris}, it follows that every component of~$W$ through $I$ has codimension $1$ in $G$.

%

%

To show integrality, fix a monomial ideal $M_\lambda\in G_0$, and let $U_\lambda\subset H_8^4$ be the corresponding open set, as in Section~\ref{subsec:coordinates}. For any $I\in U_{\lambda}$ the initial ideal $\initial_{(1,1,1,1)}(I)\in G_0\cap U_\lambda$ and is generated by the $(1,1,1,1)$-leading forms of the given generating set of $I$ and all cubics.  Thus we may define a projection morphism $\pi\colon U_\lambda \to G_0 \cap U_\lambda$ which corresponds to taking the $(1,1,1,1)$-initial ideal.  Since $R^4_8$ is integral, so is the image $\pi(R^4_8\cap U_\lambda)=W_0\cap U_\lambda$.  Thus $W_0$ and $W_0\times \mathbb A^4\cong W$ are integral.
\end{proof}

\subsection{A first approximation to the intersection locus}\label{subsec:naive-intersection}
Any point $I$ in $W_0$ is a singular point in the Hilbert scheme. In Lemma~\ref{lem:thebeast}, we construct an equation that cuts out the singular locus over an open set of $G_0$. The local equation defines a nonreduced divisor whose support contains $W_0$. 
The following subsets of $G_0$ will be used in this section:
\begin{align*}
G_0' &:=\{I\in G_0\mid\text{the ideal $I$ is generated in degree $2$}\}\\ 
Z_1&:=G_0\mathop\backslash G_0'\\
Z_2&:=\{I\in G_0 \mid \Hom(I,S/I)_{-2}\ne 0 \} 
\end{align*}
Note that $G_0'$ is open in $G_0$ and that every ideal in $G_0'$ is generated by seven quadrics.  The set $Z_2$ will be used in Lemma \ref{lem:mult8}.  If $I$ is any ideal in $G_0$, the tangent space $\Hom_S(I,S/I)$ is graded.  The following lemma shows that if we want to determine whether $I$ is a singular point in the Hilbert scheme, then it suffices to compute only the degree $-1$ component of the tangent space.
\begin{lemma}\label{lem:hom}
For any $I\in G'_0$ we have $\dim_k \Hom_S(I,S/I)_{-1}\geq 4$, and 
$I$ is singular in $H^4_8$ if and only if $\dim_k \Hom_S(I,S/I)_{-1}\geq 5$.
\end{lemma}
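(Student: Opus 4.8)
The plan is to grade the tangent space $\Hom_S(I,S/I)\isom T_IH^4_8$ and to control each graded piece. Since $I\in G_0'$ is generated by the seven quadrics spanning $I_2$ and $I_j=S_j$ for $j\ge 3$, a graded homomorphism $I\to S/I$ is determined by its restriction to $I_2$ and can be nonzero only in degrees $-2$, $-1$ and $0$; write $\Hom_S(I,S/I)=\Hom_{-2}\oplus\Hom_{-1}\oplus\Hom_0$. First I would note that $\Hom_0$ is unobstructed: any $k$-linear map $I_2\to(S/I)_2$ extends $S$-linearly, because the compatibility conditions take values in $(S/I)_j$ for $j\ge 3$, which all vanish. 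Hence $\Hom_0\isom\Hom_k(I_2,(S/I)_2)\isom\Hom_k(k^7,k^3)$ has dimension $21$, so $\dim_kT_IH^4_8=21+\dim_k\Hom_{-1}+\dim_k\Hom_{-2}$.

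Next I would exhibit an explicit $4$-dimensional subspace of $\Hom_{-1}$. For $i=1,\dots,4$ define $\Phi_i\colon I\to S/I$ by $\Phi_i(f)=\frac{\partial f}{\partial x_i}\bmod I$. Because $\partial/\partial x_i$ is a derivation, $\partial_i(gf)=(\partial_i g)f+g\,\partial_i f\equiv g\,\partial_i f\pmod I$ for $f\in I$, so $\Phi_i$ is $S$-linear of degree $-1$. The $\Phi_i$ are linearly independent: if $\sum_ic_i\Phi_i=0$ then the constant-coefficient derivation $D=\sum_ic_i\partial_i$ annihilates $I_2$; but if $D\ne0$ then after a linear change of coordinates $D=\partial/\partial x_1$, whose kernel on $S_2$ is the $6$-dimensional space of quadrics in $x_2,x_3,x_4$ (here $\ch k\ne2$), contradicting $\dim_kI_2=7$. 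This gives $\dim_k\Hom_{-1}\ge4$, proving the first assertion. (These $\Phi_i$ span the tangent directions of the translation $\mathbb A^4$-action, consistent with $G\isom\Gr(7,S_2)\times\mathbb A^4$ from Lemma~\ref{lem:G0andG}.)

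The crux is the claim that $\dim_k\Hom_{-1}=4$ forces $\Hom_{-2}=0$. I would argue by contradiction: suppose $0\ne\psi\in\Hom_{-2}$, so $\psi$ restricts to a nonzero functional on $I_2$ with values in $(S/I)_0=k$. For $j=1,\dots,4$ the map $\rho_j\colon f\mapsto x_j\,\psi(f)$ lies in $\Hom_{-1}$, with $\rho_j(q)=\psi(q)x_j$ for $q\in I_2$, and the $\rho_j$ are linearly independent since $\psi|_{I_2}\ne0$. If $\dim_k\Hom_{-1}=4$ then $\{\Phi_i\}$ and $\{\rho_j\}$ are both bases of $\Hom_{-1}$, so $\rho_j=\sum_ia_{ij}\Phi_i$ for an invertible matrix $(a_{ij})$; evaluating on $q\in I_2$ gives $\psi(q)\,x_j=\sum_ia_{ij}\,\partial_iq$ in $S_1$. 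Choosing $q_0\in I_2$ with $\psi(q_0)=1$ and subtracting $\psi(q)$ times the relation for $q_0$, we get $\partial_i(q-\psi(q)q_0)=0$ for all $i$ and all $q\in I_2$; by Euler's identity and $\ch k\ne2$ this forces $q=\psi(q)q_0$, so $I_2$ would be $1$-dimensional, a contradiction.

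Finally I would assemble the equivalence, using that by Theorem~\ref{thm:smoothable-summary} and Proposition~\ref{prop:h48red} the only irreducible components of $H^4_8$ are $R^4_8$, of dimension $32$, and $G$, of dimension $25$. If $\dim_k\Hom_{-1}\ge5$ then $\dim_kT_IH^4_8\ge26$; either $I$ lies only on $G$, so $\dim_IH^4_8=25<\dim_kT_IH^4_8$ and $I$ is singular, or $I\in R^4_8\cap G$ lies on two components and is again singular. Conversely, if $\dim_k\Hom_{-1}=4$ then $\Hom_{-2}=0$ by the claim, so $\dim_kT_IH^4_8=25$; since $G$ is a smooth $25$-dimensional closed subscheme of $H^4_8$ through $I$, we get $\dim\mathcal O_{H^4_8,I}\le\dim_kT_IH^4_8=25=\dim\mathcal O_{G,I}\le\dim\mathcal O_{H^4_8,I}$, so $\mathcal O_{H^4_8,I}$ is regular and $I$ is smooth. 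The main obstacle is the claim in the third paragraph: ruling out a spurious degree $-2$ summand in the tangent space is not a matter of dimension counting but requires exploiting the precise interaction between the would-be degree $-2$ deformations and the translation deformations $\Phi_i$.
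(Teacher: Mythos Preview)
Your proof is correct and follows essentially the same approach as the paper: grade $\Hom_S(I,S/I)$, identify $\Hom_0$ as $21$-dimensional, produce the four partial-derivative tangent vectors in $\Hom_{-1}$, and rule out a nonzero $\Hom_{-2}$ when $\dim\Hom_{-1}=4$ by multiplying a degree $-2$ homomorphism by the $x_j$ to fill up $\Hom_{-1}$. The paper's contradiction is slightly shorter---it normalizes $\phi$ so that $\phi(q_1)=0$ and then observes that each $t_i$ lies in $\langle x_j\phi\rangle$, forcing $\partial_i q_1=0$ directly---whereas you invert the change-of-basis matrix to conclude $\dim I_2=1$; but this is a cosmetic difference, and your explicit appeal to Theorem~\ref{thm:smoothable-summary} and Proposition~\ref{prop:h48red} for the component structure merely makes precise what the paper leaves implicit.
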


\begin{proof}
Since $S/I$ is concentrated in degrees $0$, $1$ and $2$, and $I\in G'_0$ is minimally generated only in degree $2$, we have that $\Hom_S(I,S/I)$ is concentrated in degrees $0,-1,-2$.  Furthermore, since $I\in G'_0$ we have that $\dim_k \Hom_S(I,S/I)_0=21$, because any $k$-linear map $I_2\to (S/I)_2$ will be $S$-linear.  Next, note that the morphisms $t_i\colon I_2\to (S/I)_1$ mapping $q_j$ to the class of $\frac{\partial q_j}{\partial x_i}$ are $S$-linear morphisms, and thus we have $\Hom_S(I,S/I)_{-1}$ is at least $4$-dimensional.

Since the dimension of $G'_0$ is $25$, we see that $I$ is singular if $\dim_k \Hom_S(I,S/I)_{-1}>4$.  Conversely, assume for contradiction that there exists an $I$ such that $I$ is singular and dimension  of $\Hom_S(I,S/I)_{-1}$ is exactly $4$.  Since $I$ is singular, we have that $ \Hom_S(I,S/I)_{-2}$ is nontrivial.  Let $\phi\in \Hom_S(I,S/I)_{-2}$ be a nonzero map. By changing the generators of $I$ we may assume that $\phi(q_i)=0$ for $i=1, \dots, 6$ and $\phi(q_7)=1$.  Now the vector space $\langle x_1\phi, x_2\phi, x_3\phi, x_4\phi \rangle$ is a $4$-dimensional subspace of $\Hom(I,S/I)_{-1}$.  Since we have assumed that $\dim_k \Hom(I,S/I)_{-1}=4$ it must be the case that the space $\langle x_1\phi, x_2\phi, x_3\phi, x_4\phi \rangle$ equals the space $\langle t_1, \dots, t_4\rangle$.  However, this would imply that all partial derivatives of $q_1$ are zero, which is impossible.
\end{proof}
Now we will investigate those ideals which have extra tangent vectors in degree $-1$. If $\phi\colon I_2\rightarrow (S/I)_{1}$ is a $k$-linear map then $\phi$ will be $S$-linear if and only if $\phi$ satisfies the syzygies of $I$ modulo $I$. In other words, $\phi$ should belong to the kernel of:
\begin{align*}
\Hom_k(I_2, (S/I)_1) &\to \Hom_k(\Syz(I), (S/I)) \\
\phi & \mapsto \left(\sigma \mapsto \overline{\sigma(\phi)}\right)
\end{align*}
Since $I$ contains $\mathfrak{m}^3$ and is generated by quadrics, it suffices to consider linear syzygies $\sigma$ and we have an exact sequence:
\[0\to \Hom_S(I,S/I)_{-1} \longrightarrow \Hom_k(I_2,(S/I)_1)\stackrel{\psi}{\longrightarrow} \Hom_k(\Syz(I)_1, (S/I)_2)\]
where $\Syz(I)_1$ is the vector space of linear syzygies.  
We see that the $t_i$ from the previous lemma span a $4$-dimensional subspace $T$ of the kernel of $\psi$. We obtain
\[\Hom_k(I,S/I)_{-1}/ T \stackrel{\overline\psi}{\longrightarrow} \Hom_k(\Syz(I)_1, (S/I)_2) \]
and $I\in G_0'$ will be a singular point if and only if $\ker(\overline{\psi})\neq 0$. Since $I$ is generated by quadrics, it follows that $\Syz(I)_1$ has dimension $4 \cdot 7 - 20 = 8$.  Therefore $\overline{\psi}$ is a map between $24$-dimensional spaces.  Thus $\det(\overline{\psi})$ vanishes if and only if $I\in G_0'$ is a singular point in $H^4_8$. 

The global version of this determinant will give a divisor whose support contains $W_0$. On $G'_0$ we have the $\mathcal O_{G'_0}$-algebra $\mathcal S:=\mathcal O_{G'_0}[x_1, x_2, x_3, x_4]$, which is graded in the standard way, $\mathcal S=\oplus_{i} \mathcal S_i$. We have a graded universal ideal sheaf $\mathcal I=\oplus \mathcal I_i$, and a universal sheaf of graded algebras $\mathcal S/\mathcal I=\oplus_i \mathcal S_i/\mathcal I_i$.  For all $i$ the sheaves $\mathcal{S}_i$ , $\mathcal{I}_i$ and $\mathcal{S}_i/\mathcal{I}_i$ are coherent locally free $O_{G'_0}$-modules.


Let $\mu\colon \mathcal I_2\otimes \mathcal S_1\to \mathcal I_3$ be the multiplication map.  Surjectivity of this map follows from the definition of $G_0'$.  We define ${\mathcal K}_1$ to be the kernel of this map, so that we have the following exact sequence:
\begin{equation}\label{eqn:sheaf-linear-syzygies}
0\rightarrow \mathcal K_1\rightarrow \mathcal I_2\otimes \mathcal S_1\stackrel{\mu}{\rightarrow} \mathcal I_3\rightarrow 0
\end{equation}
In other words, ${\mathcal K}_1$ is the sheaf of linear syzygies.
Let $U$ be an open subset of $G'_0$ such that $\mathcal I_2|_U$ is free. Denote generators of $\mathcal I_2(U)$ by $q_1, \dots, q_7$ and thus we have the following
\begin{equation*}
{\mathcal K}_1(U)=\left\{ \left(\sum_{i=1}^7 q_i\otimes l_i\right) \mid l_i\in \mathcal S_1(U), \sum q_il_i=0\in \mathcal I_3\right\}
\end{equation*}
To simplify notation in the following lemma we write $\sheafHom$ to denote $\sheafHom_{\mathcal O_{G_0'}}$.  
\begin{lemma} \label{lem:thebeast}
The following statements hold:
\begin{enumerate}
\item\label{lem:thebeast:2} On $G'_0$ there is a morphism of locally free sheaves of ranks $28$ and $24$ respectively:
\[ h \colon \sheafHom({\mathcal I}_2, {\mathcal S}_1)\rightarrow \sheafHom({\mathcal K}_1,{\mathcal S}_2/{\mathcal I}_2)\]
such that for any $I\in G'_0$, we have $\ker(h\otimes k(I))=\Hom(I,S/I)_{-1}$.

\item\label{lem:thebeast:3}
There is a locally free subsheaf of rank four $\mathcal T\subset \ker(h)$ inducing a morphism:
\[ \overline{h} \colon \sheafHom({\mathcal I}_2, {\mathcal S}_1)/{\mathcal T}\rightarrow \sheafHom({\mathcal K}_1,{\mathcal S}_2/{\mathcal I}_2)\]
such that $\ker(\overline{h}\otimes k(I))\ne 0$ if and only if $\dim_k \Hom(I,S/I)_{-1} \geq 5$.
\end{enumerate}
\end{lemma}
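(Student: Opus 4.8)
The plan is to globalize the fiberwise construction of the map $\psi$ carried out in the paragraph preceding the lemma. For part~(\ref{lem:thebeast:2}), I would define $h$ directly on local sections: given a homomorphism $\phi\colon \mathcal I_2\to \mathcal S_1$, tensor with $\id_{\mathcal S_1}$ and then apply the multiplication $\mathcal S_1\otimes\mathcal S_1\to\mathcal S_2$ to obtain a map $\mathcal I_2\otimes\mathcal S_1\to\mathcal S_2$; restricting to the subsheaf $\mathcal K_1\subset\mathcal I_2\otimes\mathcal S_1$ of~(\ref{eqn:sheaf-linear-syzygies}) and composing with the projection $\mathcal S_2\to\mathcal S_2/\mathcal I_2$ produces $h(\phi)\in\sheafHom(\mathcal K_1,\mathcal S_2/\mathcal I_2)$. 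Since $\phi\mapsto h(\phi)$ is $\mathcal O_{G_0'}$-linear, $h$ is a morphism of sheaves. Next I would record that $\mathcal I_2$, $\mathcal S_1$, $\mathcal K_1$ and $\mathcal S_2/\mathcal I_2$ are locally free of ranks $7$, $4$, $8$, $3$ respectively (the rank of $\mathcal K_1$ because $\mu$ is a surjection of locally free sheaves, so $\mathcal K_1$ is its locally free kernel, of rank $4\cdot 7-20=8$), so that $h$ indeed has source of rank $28$ and target of rank $24$.

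Then I would identify the fiber of $h$ at a point $I\in G_0'$. Because $\sheafHom$ out of a locally free sheaf commutes with base change (and $(S/I)_1=S_1$, as $I$ contains no linear forms), and because forming $\mathcal K_1$ as $\ker\mu$ commutes with base change (so $\mathcal K_1\otimes k(I)=\Syz(I)_1$), the map $h\otimes k(I)$ is exactly the map $\psi\colon\Hom_k(I_2,(S/I)_1)\to\Hom_k(\Syz(I)_1,(S/I)_2)$ appearing in the displayed exact sequence, whose kernel is $\Hom_S(I,S/I)_{-1}$; this proves~(\ref{lem:thebeast:2}).

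For part~(\ref{lem:thebeast:3}), I would take $\mathcal T$ to be the subsheaf generated by the four global sections $t_1,\dots,t_4$ of $\sheafHom(\mathcal I_2,\mathcal S_1)$, where $t_i$ is the restriction to $\mathcal I_2$ of the $\mathcal O_{G_0'}$-linear derivation $\partial/\partial x_i\colon\mathcal S_2\to\mathcal S_1$; fiberwise these are the $t_i$ of Lemma~\ref{lem:hom}. I would then check two things. First, that $t_1,\dots,t_4$ are linearly independent in every fiber: a dependence at $I$ would force, after a change of coordinates, every quadric of $I_2$ to omit one variable, hence $\dim I_2\le\binom 42=6<7$, a contradiction; consequently $\mathcal T$ is a rank-$4$ subbundle with locally free quotient of rank $24$. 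Second, that each $t_i$ lies in $\ker h$: differentiating a linear syzygy $\sum l_jq_j=0$ with respect to $x_i$ gives $\sum l_j(\partial q_j/\partial x_i)=-\sum(\partial l_j/\partial x_i)q_j\in\mathcal I_2$, so $h(t_i)=0$. Passing to the quotient then yields $\overline h$, and since the quotient by the subbundle $\mathcal T$ is compatible with base change and $\mathcal T\otimes k(I)$ is the span $T$ of the $t_i$ at $I$, we obtain $\ker(\overline h\otimes k(I))=\ker(h\otimes k(I))/T=\Hom_S(I,S/I)_{-1}/T$, which is nonzero precisely when $\dim_k\Hom_S(I,S/I)_{-1}\ge 5$.

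The main point requiring care is the base-change bookkeeping: I would need to verify that forming $\mathcal K_1$, the $\sheafHom$ sheaves, and the quotient by $\mathcal T$ each commute with restriction to a fiber $k(I)$. This is where an error would most easily hide, but it is safe here because every sheaf involved is locally free and every short exact sequence in play is a sequence of locally free sheaves, hence stays exact after $\otimes\, k(I)$. Once that is in hand, matching $h\otimes k(I)$ with $\psi$ and $\overline h\otimes k(I)$ with $\overline\psi$ is immediate from the explicit formulas, and I do not expect any further obstacle: the lemma amounts to a careful globalization of the fiberwise linear algebra already carried out in the text, with $\det\overline h$ the divisor the subsequent steps will analyze.
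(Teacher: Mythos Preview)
Your proposal is correct and follows essentially the same approach as the paper: both construct $h$ so that on local sections $h(\phi)(\sigma)=\overline{\sum \phi(q_i)l_i}$, identify its fiber with the map $\psi$ from the preceding discussion, and define $\mathcal T$ via the partial-derivative maps $t_i$. Your treatment is slightly more self-contained---you verify $t_i\in\ker h$ explicitly by differentiating a syzygy and give a direct argument for fiberwise independence of the $t_i$---whereas the paper constructs $h$ by a dual manipulation $\sheafHom(\mathcal I_2,\mathcal S_1)\to\sheafHom(\mathcal K_1\otimes\check{\mathcal S_1},\mathcal S_1)\cong\sheafHom(\mathcal K_1,\mathcal S_1\otimes\mathcal S_1)$ and appeals to Lemma~\ref{lem:hom} for the independence; these are cosmetic differences only.
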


\begin{proof}
$(\ref*{lem:thebeast:2})$  We have a map of locally free $\mathcal O_{G'_0}$-modules: ${\mathcal K}_1\to \mathcal I_2\otimes \mathcal S_1$.  This induces the map
${\mathcal K}_1\otimes \check{{\mathcal S}_1}\to \mathcal I_2$.
Applying $\sheafHom(-, \mathcal S_1)$ to both sides we get:
$$\sheafHom(\mathcal I_2, \mathcal S_1)\to \sheafHom({\mathcal K}_1\otimes \check{{\mathcal S}_1}, \mathcal S_1)\cong \sheafHom({\mathcal K}_1, \mathcal S_1\otimes \mathcal S_1)$$
For the isomorphism above, we are using identities about $\sheafHom$,
tensor product of sheaves, and sheaf duality from~\cite[p. 123]{hart}.  The sequence $\mathcal S_1\otimes \mathcal S_1\to \mathcal S_2 \to \mathcal S_2/\mathcal I_2$ gives a map from $\sheafHom(\mathcal K_1, \mathcal S_1\otimes \mathcal S_1)\to \sheafHom(\mathcal K_1, \mathcal S_2/\mathcal I_2)$.  By composition we obtain the desired map $h$:
$$h\colon\sheafHom(\mathcal I_2, \mathcal S_1)\to \sheafHom({\mathcal K}_1, \mathcal S_2/\mathcal I_2)$$

Let us take a moment and consider $h$ in concrete terms, since this will be used
for proving part (\ref*{lem:thebeast:3}) of the lemma.  Let $U\subset G'_0$ be an open subset such that all relevant locally free sheaves are in fact free.  Let $q_1, \ldots, q_7$  be the generators of $\mathcal I_2(U)$ and let $\sigma_j:=\sum_{i=1}^7 q_i\otimes l_{ij}$ for $1 \leq j \leq 8$ be the generators of ${\mathcal K}_1(U)$.  Finally, let $\phi\in \Hom(\mathcal I_2, \mathcal S_1)$ be a map $(q_i \mapsto m_i)$.  Then $h(\phi)$ is the map $$\sigma_j \mapsto \sum \overline{m_il_i}$$
where $\overline{m_il_i}$ is the reduction of $m_il_i$ modulo $\mathcal I_2$.

(\ref*{lem:thebeast:3})  Over any $U$ where $\mathcal I_2$ is free, let $q_1, \dots, q_7$ the global generators.  Then we define $t_1 \colon q_i\mapsto \frac{\partial}{\partial x_1}q_i$, and we define $t_2, t_3, t_4$ similarly.  This defines a locally free subsheaf $\mathcal T(U):=\langle t_1, \dots, t_4 \rangle \subset \sheafHom(\mathcal I_2, \mathcal S_1)$ of rank $4$. By the proof of Lemma~\ref{lem:hom}, the injection $\mathcal T\to \sheafHom(\mathcal I_2, \mathcal S_1)$ remains exact under pullback to a point.  It follows that the quotient $\sheafHom(\mathcal I_2, \mathcal S_1)/\mathcal T$ is locally free of rank 24~\cite[Ex II.5.8]{hart}.

It remains to show that $\mathcal T\subset \ker(h)$ and that $\ker(\overline{h}\otimes k(I))$ is nontrivial if and only if $\dim_k \Hom(I,S/I)_{_1} \geq 5$.  This is immediate from the discussion preceding this theorem.
\end{proof}

By the previous lemma, $\overline{h}$ is a map between locally free sheaves of rank $24$, and thus $\det(\overline{h})$ defines a divisor on~$G'_0$.  To ensure that this is the restriction of a unique divisor on~$G_0$, we need to verify that $Z_1$ and $Z_2$ are not too large. For this, we construct the rational curve $\tau\colon\mathbb{P}^1\rightarrow G_0$ 
defined for $t\neq \infty$ by:
\begin{equation} \label{eqn:rational-curve}
I_t=(x_1^2,x_2^2,x_3^2,x_4^2,x_1x_2,x_2x_3+tx_3x_4,x_1x_4+tx_3x_4)
\end{equation}

\begin{lemma}\label{lem:codim2} 
$Z_1 \cup Z_2$ is a closed set of codimension at least $2$ in $G_0$.
\end{lemma}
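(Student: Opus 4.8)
The plan is to use the curve $\tau$ of \eqref{eqn:rational-curve} together with the fact that $G_0\cong\Gr(7,S_2)$ has Picard group $\zz$, generated by the ample Pl\"ucker class. First note that $Z_1\cup Z_2$ is closed: $Z_1=G_0\setminus G_0'$ is closed because $G_0'$ is open, and $Z_2$ is closed because it is exactly the locus where the dimension of the graded piece $\Hom_S(I,S/I)_{-2}$ of the $\mathbb{G}_m$-equivariant tangent space to $H^4_8$ jumps, which is upper semicontinuous. Since a generic $7$-dimensional subspace of $S_2$ multiplies onto all of $S_3$ and admits no degree $-2$ endomorphism, $Z_1\cup Z_2$ is a \emph{proper} closed subset of the $21$-dimensional irreducible variety $G_0$. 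Now suppose, for contradiction, that some irreducible component $D$ of $Z_1\cup Z_2$ has codimension $1$. Then $D$ is a nonzero effective divisor, so $\mathcal O_{G_0}(D)$ is a positive power of the ample generator; hence $D$ meets every curve in $G_0$, and in particular $D$ meets the image $C:=\tau(\pp^1)$. Thus it suffices to prove that $C\cap(Z_1\cup Z_2)=\emptyset$, i.e.\ that for every $t\in\pp^1$ the point $I_t$ lies in $G_0'$ and satisfies $\Hom_S(I_t,S/I_t)_{-2}=0$.

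For $t\ne\infty$ the $7$-plane $(I_t)_2$ is given explicitly by \eqref{eqn:rational-curve}; the limiting plane at $t=\infty$ is spanned by $x_1^2,x_2^2,x_3^2,x_4^2,x_1x_2,x_3x_4,x_2x_3-x_1x_4$, which after the substitution $x_4\mapsto -x_4$ is precisely the degree $2$ part of the ideal $J$ from Proposition~\ref{prop:h48red} (this is why this particular $\tau$ is chosen). To see $I_t\notin Z_1$ one checks that $(I_t)_2\cdot S_1=S_3$ for every $t$: the multiplication map is represented by a matrix with entries in $k[t]$, and since $Z_1$ is closed it is enough to verify surjectivity over the function field $k(t)$ and then at the finitely many remaining fibers, the fiber $t=\infty$ being covered because $J$ is generated in degree $2$. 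To see $I_t\notin Z_2$, recall from the discussion preceding Lemma~\ref{lem:thebeast} that for $I\in G_0'$ a nonzero element of $\Hom_S(I,S/I)_{-2}$ is the same as a nonzero linear functional $\psi$ on $I_2$ for which $\sum_i\psi(q_i)\ell_i=0$ whenever $\sum_i q_i\otimes\ell_i$ is a linear syzygy of $I_2$; equivalently, taking $q$ dual to $\psi$, the quadric $q$ occurs in no linear syzygy of $I_2$, equivalently some hyperplane $V'\subset I_2$ has $\dim_k(V'\cdot S_1)\le 16$. One checks that no such hyperplane exists for $(I_t)_2$, for any $t$ --- again a rank computation in the linear syzygy module, carried out over $k(t)$ and then on the finitely many special fibers; at $t=\infty$ this is the assertion $\Hom_S(J,S/J)_{-2}=0$, which is visible from the matrix in the proof of Proposition~\ref{prop:h48red}, since for $d=4$ the ``$1$''-row of that matrix is identically zero. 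Hence $C$ avoids $Z_1\cup Z_2$, contradicting the existence of $D$, so $Z_1\cup Z_2$ has codimension at least $2$.

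The main obstacle is this last verification --- showing that $\tau(\pp^1)$ misses $Z_2$ for \emph{all} $t$, not merely for generic $t$; the closedness of $Z_1\cup Z_2$ is what lets us reduce it to a computation over $k(t)$ supplemented by the finitely many degenerate fibers and the boundary point $t=\infty$, and the choice of $\tau$ in \eqref{eqn:rational-curve} is engineered so that this boundary fiber is the ideal $J$ already analyzed in Proposition~\ref{prop:h48red}, so no genuinely new endomorphism computation is needed there. An alternative to the curve argument would be to bound $\dim Z_1$ and $\dim Z_2$ directly --- $Z_1$ is the image of the incidence variety of cubics with at most $3$-dimensional spaces of partials, and $Z_2$ is governed by $6$-dimensional spaces of quadrics with $8$-dimensional linear syzygies --- but this requires a more delicate case analysis of special subspaces of $S_2$, so the curve route is preferable.
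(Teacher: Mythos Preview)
Your strategy is the same as the paper's: use $\Pic(G_0)=\zz$ generated by an ample class, so any codimension-$1$ component would have to meet the curve $\tau$, and then verify that $\tau(\pp^1)$ misses $Z_1\cup Z_2$. The closedness argument and the reformulation of $Z_2$ in terms of a hyperplane $V'\subset I_2$ with $\dim_k(V'\cdot S_1)\le 16$ are both fine.

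The gap is in the verification step. You say you will check $I_t\notin Z_1\cup Z_2$ ``over $k(t)$ and then on the finitely many special fibers,'' but you never identify which finite values of $t$ are special, nor do you carry out any check there. A generic-plus-unspecified-special-fibers argument is not a proof; the whole content of the lemma is that \emph{every} fiber is good, and the ``special fibers'' are exactly the ones where something could go wrong. The paper closes this gap with an observation you are missing: the torus $k^\times$ acts on $\mathbb A^4$ by $\alpha\cdot(x_1,x_2,x_3,x_4)=(x_1,x_2,\alpha x_3,\alpha x_4)$, and taking $\alpha=t$ sends $I_1$ to $I_t$ for $t\ne 0,\infty$. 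Thus all $I_t$ with $t\in k^\times$ lie in a single $\GL_4$-orbit, and one only has to check the three ideals $I_0$, $I_1$, $I_\infty$ by hand. You already handle $I_\infty$ (it is $J$ from Proposition~\ref{prop:h48red} up to a sign change), so with this torus trick the remaining work is just the two explicit ideals $I_0$ and $I_1$, each a short direct computation.
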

\begin{proof}
$Z_1$ is closed because it is the support of the cokernel of the multiplication map $\mathcal I_2 \otimes \mathcal S_1 \rightarrow \mathcal S_3$. The intersection $Z_2 \cap G_0'$ is the degeneracy locus of
\begin{equation*}
\sheafHom(\mathcal I_2, \mathcal S_0) \rightarrow \sheafHom(\mathcal K_1, \mathcal S_1) \oplus \sheafHom(\mathcal K_2, \mathcal S_2/I_2)
\end{equation*}
which is the analogue of Lemma~\ref{lem:thebeast} (\ref*{lem:thebeast:2}) for computing $\Hom(I_2, S_0)$.  Thus $Z_1\cup Z_2$ is closed in $G_0$.

Because $\Pic(G_0) = \mathbb Z$ and $G_0$ is projective, checking that the $1$-cycle $\tau$ does not intersect $Z_1 \cup Z_2$ will show that $Z_1 \cup Z_2$ has codimension at least~$2$. By passing to the algebraic closure, we can assume that $k$ is algebraically closed.
The group $k^\times$ acts on $\mathbb{A}^4$ by $\alpha \cdot (x_1,x_2,x_3,x_4)=(x_1,x_2,\alpha x_3,\alpha x_4)$, and taking $\alpha = t$ maps $I_1$ to $I_t$, for any $t$ other than $0$ or $\infty$. Thus, it suffices to check that the following three ideals do not intersect $Z_1 \cup Z_2$:
\begin{align*}
I_0&=(x_1^2,x_2^2,x_3^2,x_4^2,x_1x_2,x_2x_3,x_1x_4) \\
I_1&=(x_1^2,x_2^2,x_3^2,x_4^2,x_1x_2,x_2x_3+x_3x_4,x_1x_4+x_3x_4) \\
I_{\infty}&=(x_1^2,x_2^2,x_3^2,x_4^2,x_1x_2,x_2x_3-x_1x_4,x_3x_4)
\end{align*}
It is obvious that these are generated in degree $2$. A change of variables transforms $I_\infty$ to the ideal $J$ from Proposition~\ref{prop:h48red}, which is smooth, so $\Hom(I_t, S/I_t)_{-2} = 0$ for $t = 1, \infty$. One can also check that $\Hom(I_0, S/I_0)_{-2} = 0$, which holds in all characteristics because $I_0$ is a monomial ideal. Therefore, $Z_1 \cup Z_2$ has codimension at least~$2$.
\end{proof}

\begin{lemma} \label{lem:divh}
Let $D$ be the divisor on $G_0$ defined locally by $\det(\overline{h})$.  Then $W_0$ belongs to the support of $D$.
\end{lemma}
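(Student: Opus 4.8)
The strategy is to pass to the open set $G_0'\subseteq G_0$, where $D$ is cut out by the explicit function $\det(\overline{h})$, and to recognize $V(\det(\overline{h}))$ as the singular locus of $H^4_8$ inside $G_0'$. Indeed, combining Lemmas~\ref{lem:hom} and~\ref{lem:thebeast}, for any $I\in G_0'$ we have: $\det(\overline{h})(I)=0$ if and only if $\ker(\overline{h}\otimes k(I))\neq 0$, if and only if $\dim_k\Hom_S(I,S/I)_{-1}\geq 5$, if and only if $I$ is a singular point of $H^4_8$. Since $\Supp(D)$ is closed and contains $V(\det(\overline{h}))$, it is enough to prove two statements: (a) every $I\in W_0\cap G_0'$ is a singular point of $H^4_8$; and (b) $W_0\cap G_0'$ is dense in $W_0$. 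From (a) and (b) one gets $W_0\cap G_0'\subseteq V(\det(\overline{h}))\subseteq\Supp(D)$, and then $W_0=\overline{W_0\cap G_0'}\subseteq\Supp(D)$ since $\Supp(D)$ is closed.

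For (a), I would show that each point of $W_0$ lies on two distinct irreducible components of $H^4_8$; then its local ring has at least two minimal primes, hence is not a domain, hence is not regular, so the point is singular. The closed subscheme $G_0\cong\Gr(7,S_2)$ of $H^4_8$ is irreducible, so it is contained in some irreducible component $C$ of $H^4_8$. I claim $C\neq R^4_8$: the ideal $J$ of Proposition~\ref{prop:h48red} is homogeneous with Hilbert function $(1,4,3)$, hence lies in $G_0$, but its tangent space in $H^4_8$ has dimension $8\cdot 4-7=25$, which is less than $\dim R^4_8=32$, so $J\notin R^4_8$; thus $G_0\not\subseteq R^4_8$ and therefore $C\neq R^4_8$. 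Now any $I\in W_0=G_0\cap R^4_8$ lies both in $R^4_8$ and in $C$, two distinct components of $H^4_8$, so $I$ is singular in $H^4_8$.

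For (b), recall from Lemma~\ref{lem:codim1} that $W_0$ is a prime divisor in $G_0$, hence irreducible of codimension one, while Lemma~\ref{lem:codim2} shows that $Z_1=G_0\setminus G_0'$ has codimension at least two in $G_0$. A codimension-one irreducible subset cannot be contained in a closed set of codimension at least two, so $W_0\cap G_0'$ is a nonempty open subset of the irreducible space $W_0$ and is therefore dense in $W_0$.

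The only real obstacle here is step (a): because $G_0$ is merely a $21$-dimensional subscheme of the component $G^4_8$, and not all of it, one cannot directly assert that ``$W_0$ lies on the intersection of two components.'' One must first produce a bona fide component $C$ of $H^4_8$ containing $G_0$ and then separate $C$ from $R^4_8$ — which is precisely what the small tangent space of the ideal $J$ from Proposition~\ref{prop:h48red} accomplishes. Everything else is formal, using Lemmas~\ref{lem:hom}, \ref{lem:thebeast}, \ref{lem:codim1}, and~\ref{lem:codim2}.
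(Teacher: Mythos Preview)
Your proposal is correct and follows essentially the same route as the paper's proof. The paper's argument is identical in structure: $W_0$ lies in the singular locus of $H^4_8$, hence $W_0\cap G_0'\subset V(\det(\overline h))$; since $W_0$ is an irreducible divisor and $Z_1=G_0\setminus G_0'$ has codimension at least~$2$, $W_0$ meets $G_0'$ and therefore $W_0\subset D$. The only difference is that the paper simply asserts ``The Hilbert scheme is singular on $W_0$'' (this was stated at the opening of the subsection), whereas you spell out the reason via the existence of a second component $C\supset G_0$ distinguished from $R^4_8$ by the ideal $J$ of Proposition~\ref{prop:h48red}. Your self-critique in the last paragraph is therefore a bit overcautious: the argument you give is exactly what justifies the paper's one-line assertion.
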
  
\begin{proof}
The Hilbert scheme is singular on $W_0$, so $W_0 \cap G_0' \subset V(\det(\overline h))$. Since $W_0$ is a divisor, Lemma~\ref{lem:codim2} tells us that $W_0$ intersects $G_0'$, so the irreducibility of $W_0$ means that it is contained in $D$.
\end{proof}

\subsection{An equation for \texorpdfstring{$W_0$}{W0} over
\texorpdfstring{$\mathbb C$}{C}} \label{subsec:upper-bound-degree}

In this section, we restrict to the case $k=\mathbb C$ in order to use the representation theory of $\GL_4(\mathbb C)$.

We will use the result of Lemma~\ref{lem:divh} to give an upper bound on the degree of $W_0$ in terms of Pl\"ucker coordinates.  This leads to a proof of Theorem \ref{thm:waldo} over $\mathbb C$.  The restriction to $\mathbb C$ will be removed in the next section.

Let $H$ be an effective divisor which generates $\Pic(G_0)=\mathbb Z$.  First we compute the degree of $D$ in Pl\"ucker coordinates, using the rational curve~$\tau$.
\begin{lemma} \label{lem:rational-curve-intersection}
The curve $\tau$ has the following intersection multiplicities:
	\begin{enumerate}
		\item$\tau\cdot H=1$.
 
		\item$\tau\cdot D=16$.
	\end{enumerate}
\end{lemma}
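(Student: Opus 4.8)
The plan is to compute each intersection number as the degree along $\tau$ of a suitable line bundle on $G_0$.

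\emph{Part (1).} First I would check that $\tau$ embeds $\mathbb P^1$ as a \emph{line} of the Pl\"ucker embedding of $\Gr(7,S_2)$. The key observation is that $(x_2x_3+tx_3x_4)-(x_1x_4+tx_3x_4)=x_2x_3-x_1x_4$ is independent of $t$, so every $I_t$ contains the fixed $6$-dimensional subspace
$$A=\langle x_1^2,x_2^2,x_3^2,x_4^2,x_1x_2,x_2x_3-x_1x_4\rangle\subset S_2,$$
and every $I_t$ (including $I_\infty=A+\langle x_3x_4\rangle$, which agrees with the ideal in Lemma~\ref{lem:codim2}) is contained in the fixed $8$-dimensional subspace $B=A+\langle x_2x_3,x_3x_4\rangle$. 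Since $\dim I_t=7$, the assignment $t\mapsto I_t$ identifies $\mathbb P^1$ with $\{W: A\subseteq W\subseteq B\}=\mathbb P(B/A)$, which is a line of $\Gr(7,S_2)$. As $G_0$ is projective with $\Pic(G_0)=\mathbb Z$ and $H$ is an effective generator, $\mathcal O_{G_0}(H)$ is the Pl\"ucker bundle $\mathcal O(1)$, whence $\tau\cdot H=\deg\tau^*\mathcal O(1)=1$. In particular $c_1(\mathcal I_2)\cdot\tau=-1$, since $\det\mathcal I_2$ pulls back along $\tau$ to the dual of the Pl\"ucker bundle.

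\emph{Part (2).} By Lemma~\ref{lem:codim2} the curve $\tau$ misses $Z_1\cup Z_2$, hence lies in $G_0'$, and by Lemma~\ref{lem:thebeast} the divisor $D$ is cut out there by $\det\overline h$, where $\overline h\colon E\to F$ is a morphism of rank-$24$ locally free sheaves with $E=\sheafHom(\mathcal I_2,\mathcal S_1)/\mathcal T$ and $F=\sheafHom(\mathcal K_1,\mathcal S_2/\mathcal I_2)$. Thus $\mathcal O_{G_0}(D)\cong(\det E)^\vee\otimes\det F$ and $\tau\cdot D=c_1(F)\cdot\tau-c_1(E)\cdot\tau$. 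It then remains to track first Chern classes along $\tau$: the sheaves $\mathcal S_1,\mathcal S_2,\mathcal S_3$ are trivial, and $\mathcal I_3=\mathcal S_3$ is trivial of rank $20$; from Part (1), $c_1(\mathcal I_2)\cdot\tau=-1$, so $c_1(\mathcal S_2/\mathcal I_2)\cdot\tau=1$; the exact sequence~\eqref{eqn:sheaf-linear-syzygies} gives $c_1(\mathcal K_1)\cdot\tau=c_1(\mathcal I_2\otimes\mathcal S_1)\cdot\tau=4\,c_1(\mathcal I_2)\cdot\tau=-4$; and $\mathcal T$ is trivial along $\tau$, since the four partial-derivative sections $t_1,\dots,t_4$ are independent at every point of $G_0$ (otherwise a coordinate change would put all of $I_2$ into the $6$-dimensional space of quadrics in three variables, a contradiction). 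Hence $c_1(E)\cdot\tau=c_1(\sheafHom(\mathcal I_2,\mathcal S_1))\cdot\tau=4\,c_1(\mathcal I_2^\vee)\cdot\tau=4$ and $c_1(F)\cdot\tau=3\,c_1(\mathcal K_1^\vee)\cdot\tau+8\,c_1(\mathcal S_2/\mathcal I_2)\cdot\tau=12+8=20$, so $\tau\cdot D=20-4=16$.

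\emph{Main obstacle.} The arithmetic is routine Chern-class bookkeeping; the step requiring care is the identification $\mathcal O_{G_0}(D)\cong(\det E)^\vee\otimes\det F$ together with the fact that $D$, a priori only a divisor on the open locus $G_0'$, restricts along $\tau$ to the honest divisor on $G_0$ appearing in the statement of Lemma~\ref{lem:rational-curve-intersection} — this is precisely where the conclusion of Lemma~\ref{lem:codim2} (that $\tau$ avoids $Z_1\cup Z_2$) is used. The geometric input of Part (1), that $\tau$ is a line, is elementary but essential, as it fixes $c_1(\mathcal I_2)\cdot\tau$ and hence every degree entering Part (2).
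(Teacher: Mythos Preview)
Your proof is correct and, for part (2), takes a genuinely different route from the paper.

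For part (1), the paper picks two explicit Pl\"ucker coordinates $p_1,p_2$ and verifies that $p_1/p_2$ pulls back to $t$. Your argument---that all $I_t$ sit between a fixed $6$-space $A$ and a fixed $8$-space $B$, so $\tau$ is the line $\mathbb P(B/A)$---is equivalent but slightly more geometric.

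For part (2), the paper proceeds by brute force: it writes down the eight linear syzygies $\sigma_1,\dots,\sigma_8$ of $I_t$ explicitly, expands $h(\phi)(\sigma_j)$ in the basis $x_1x_3,x_2x_4,x_3x_4$ (yielding a $24\times 28$ matrix over $k[t]$), and then uses Macaulay2 to compute that the ideal of $24\times 24$ minors is $(t^{16})$. Your approach replaces this entirely with a Chern-class computation: since $D$ is the degeneracy locus of $\overline h\colon E\to F$ between rank-$24$ bundles, $\mathcal O(D)\cong(\det E)^\vee\otimes\det F$, and you read off $c_1(E)\cdot\tau$ and $c_1(F)\cdot\tau$ from the tautological sequences and the result of part (1). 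Each ingredient checks out: $c_1(\mathcal I_2)\cdot\tau=-1$ from part (1), $\mathcal I_3=\mathcal S_3$ is trivial because the Hilbert function is $(1,4,3)$, the syzygy sequence gives $c_1(\mathcal K_1)\cdot\tau=-4$, and your argument that $\mathcal T$ is globally trivialized by the $t_i$ (since a dependence would force $I_2$ into the $6$-dimensional space of quadrics in three variables) is valid in characteristic $\neq 2$. The arithmetic $20-4=16$ is then immediate.

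What each approach buys: the paper's computation is self-contained and requires no sheaf theory beyond the setup, but depends on a computer verification. Your argument is computer-free and makes the number $16$ transparent as a Chern-class identity, at the cost of the extra bookkeeping you flag (that $\tau\subset G_0'$ so the degeneracy-locus formula applies along the whole curve, which is exactly Lemma~\ref{lem:codim2}).
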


\begin{proof}
For the first statement, let $p_1$ and~$p_2$ be the Pl\"ucker coordinates corresponding to
the $(x_1^2,x_2^2,x_3^2,x_4^2,x_1x_2,x_2x_3,x_3x_4)$- and
$({x_1^2,x_2^2,x_3^2,x_4^2,x_1x_2,x_2x_3,x_1x_4})$-minors respectively. Then
$L=V(p_1)$ does not meet $I_{t}$ at infinity. For $t\neq\infty$, we see that
$p_2(I_t)\neq 0$, so local equations for $L$ valid at all common points of $L$ and $\tau$ are given by $L=\frac{p_1}{p_2}$. Since this equation pulls back to $t$ on $\mathbb{P}^1-\infty$ the statement follows.

For the second statement, note from the proof of Lemma~\ref{lem:codim2} that $I_\infty$ is a smooth point and $\tau$ does not intersect $Z_1$ or $Z_2$. Therefore, it suffices to check the degree on the open affine defined by $t \neq\infty$.

For every $t\neq \infty$,  $I_t$ has the following $8$ linear syzygies (where $q_1,\dots, q_7$ are the generators of~$I_t$ in the order in Equation~\ref{eqn:rational-curve}).
\begin{center}
\begin{tabular}{l|l}
$\sigma_1=x_2q_1-x_1q_5$ & $\sigma_2=x_4q_1-x_1q_7+tx_3q_7-t^2x_4q_3$\\
$\sigma_3=x_1q_2-x_2q_5$ & $\sigma_4=x_3q_2-x_2q_6+tx_4q_6-t^2x_3q_4$\\
$\sigma_5=x_2q_3-x_3q_6+tx_4q_3$ & $\sigma_6=x_1q_4-x_4q_7+tx_3q_4$\\
$\sigma_7=x_3q_5-x_1q_6+tx_3q_7-t^2x_4q_3$ & $\sigma_8=x_4q_5-x_2q_7+tx_4q_6-t^2x_3q_4$\\
\end{tabular}
\end{center}
The intersection number $\tau\cdot D$ equals the degree of $\tau^*(\det (\overline{h}))$, which we compute by writing out $\tau^*(h)$ as a matrix.  Let $\phi\in \Hom(I,S/I)_{-1}$ be written as $\phi(q_i)=c_{i,1}x_1+c_{i,2}x_2+c_{i,3}x_3+c_{i,4}x_4$ and recall that, if $\sigma_j=\sum_i q_i\otimes l_{ij}$ then  $h(\phi)(\sigma_j)=\sum\overline{\phi(q_i)l_{ij}}$ where the bar indicates that we are considering the image as an element of $S_2/I_2$. The monomials $x_1x_3$, $x_2x_4$ and $x_1x_4$ are a basis of $S_2/I_2$ for $t\neq \infty$, so we can explicitly express the $h(\phi)(\sigma_j)$ as follows

\begin{center}
\begin{tabular}{l|l}
$h(\phi)(\sigma_1)$ & $-c_{5,3}x_1x_3+c_{1,4}x_2x_4+(-tc_{1,3}+tc_{5,4})x_3x_4$\\ 
$h(\phi)(\sigma_2)$ & $(tc_{7,1}-c_{7,3})x_1x_3+(c_{1,2}-t^2c_{3,2})x_2x_4+$\\&$(-tc_{1,1}+c_{1,3}+t^3c_{3,1}-t^2c_{3,3}-t^2c_{7,2}+2tc_{7,4})x_3x_4$\\
$h(\phi)(\sigma_3)$ & $c_{2,3}x_1x_3-c_{5,4}x_2x_4+(-tc_{2,4}+tc_{5,3})x_3x_4$\\
$h(\phi)(\sigma_4)$ & $(c_{2,1}-t^2c_{4,1})x_1x_3+(tc_{6,2}-c_{6,4})x_2x_4+$\\&$(-tc_{2,2}+c_{2,4}+t^3c_{4,2}-t^2c_{4,4}-t^2c_{6,1}+2t
      c6z)x_3x_4$\\
$h(\phi)(\sigma_5)$ & $ -c_{6,1}x_1x_3+(tc_{3,2}+c_{3,4})x_2x_4+(-t^2c_{3,1}+tc_{6,2}-c_{6,4})x_3x_4$\\
$h(\phi)(\sigma_6)$ & $(tc_{4,1}+c_{4,3})x_1x_3-c_{7,2}x_2x_4+(-t^2c_{4,2}+tc_{7,1}-c_{7,3})x_3x_4$\\
$h(\phi)(\sigma_7)$ & $(c_{5,1}-c_{6,3}+tc_{7,1})x_1x_3-t^2c_{3,2}x_2x_4+$\\
&$(t^3c_{3,1}-t^2c_{3,3}-tc_{5,2}+c_{5,4}+tc_{6,4}-t^2c_{7,2}+
      tc_{7,4})x_3x_4$\\
$h(\phi)(\sigma_8)$ & $-t^2c_{4,1}x_1x_3+(c_{5,2}+tc_{6,2}-c_{7,4})x_2x_4+$\\&$(t^3c_{4,2}-t^2c_{4,4}-tc_{5,1}+c_{5,3}-t^2c_{6,1}+tc_{6,3}
      +tc_{7,3})x_3x_4$\\
\end{tabular}
\end{center}
Each row of the above lines yields three linear equations so $\tau^{*}(h)$ is represented by a $24\times 28$ matrix $M$ as expected. Computation in Macaulay2~\cite{macaulay2} shows that the ideal of $24\times 24$ minors of $M$ is $(t^{16})$ and the statement follows. 
\end{proof}

\begin{cor}\label{cor:16H}
The divisor $D$ is linearly equivalent to $16H$.
\end{cor}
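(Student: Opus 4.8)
The plan is to determine the class of $D$ in $\Pic(G_0)$ by intersecting it against the rational curve $\tau\colon\mathbb{P}^1\to G_0$ of Equation~\eqref{eqn:rational-curve}.

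First I would recall that $G_0=\Gr(7,S_2)$ is a smooth projective variety, so Weil and Cartier divisors coincide, and that $\Pic(G_0)$ is freely generated by the ample class $H$; hence every divisor on $G_0$ is linearly equivalent to a unique integer multiple of $H$. I would then note that $D$ is a bona fide effective divisor on $G_0$: on the open set $G_0'$ it is the zero scheme of $\det(\overline{h})$, a section of the determinant line bundle of the map $\overline{h}$ between rank-$24$ locally free sheaves from Lemma~\ref{lem:thebeast}, and by Lemma~\ref{lem:codim2} the complement $G_0\setminus G_0'\subseteq Z_1\cup Z_2$ has codimension at least $2$. Thus $D$ is the unique closure in $G_0$ of this divisor on $G_0'$, with no extra components hidden in the codimension-two locus.

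Write $D\sim c\,H$ for the unique $c\in\mathbb{Z}$. Since $\tau$ avoids $Z_1\cup Z_2$ by Lemma~\ref{lem:codim2}, its image meets $G_0'$ and is not contained in $D$, so the intersection numbers $\tau\cdot H$ and $\tau\cdot D$ are defined, and the projection formula gives $c=c\,(\tau\cdot H)=\tau\cdot(cH)=\tau\cdot D$. Plugging in $\tau\cdot H=1$ and $\tau\cdot D=16$ from Lemma~\ref{lem:rational-curve-intersection} yields $c=16$, that is, $D\sim 16H$.

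There is no real obstacle once the preceding lemmas are in place: the corollary is an immediate consequence of $\Pic(G_0)\cong\mathbb{Z}$ together with the two intersection numbers. All the substance lies in the earlier computations --- the Macaulay2 evaluation of $\tau^*(\det\overline{h})$ behind Lemma~\ref{lem:rational-curve-intersection}(2) and the verification in Lemma~\ref{lem:codim2} that $\tau$ misses $Z_1\cup Z_2$.
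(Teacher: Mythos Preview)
Your proof is correct and is exactly the argument the paper has in mind: the corollary is stated without proof because, once Lemma~\ref{lem:rational-curve-intersection} gives $\tau\cdot H=1$ and $\tau\cdot D=16$, and once one knows $\Pic(G_0)\cong\mathbb Z\cdot H$ (used already in the proof of Lemma~\ref{lem:codim2}), the conclusion $D\sim 16H$ is immediate. Your extra care in noting that $D$ extends uniquely from $G_0'$ to $G_0$ across the codimension-$2$ locus $Z_1$ is a point the paper leaves implicit.
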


The following lemma allows us to determine the degree of $W_0$.
\begin{lemma}\label{lem:mult8} 
The divisor $D$ vanishes with multiplicity at least $8$ on $W_0$.
\end{lemma}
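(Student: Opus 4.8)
The plan is to read off the multiplicity of $D$ along the prime divisor $W_0$ at the generic point $\eta$ of $W_0$ (Lemma~\ref{lem:codim1} guarantees $W_0$ is integral, so this makes sense). Since $Z_1\cup Z_2$ has codimension at least $2$ in $G_0$ by Lemma~\ref{lem:codim2} while $W_0$ has codimension $1$, the point $\eta$ lies in $G_0'$ and outside $Z_2$; thus $R:=\mathcal{O}_{G_0,\eta}$ is a discrete valuation ring and $\overline h$ is defined in a neighbourhood of $\eta$. Trivializing the two rank-$24$ bundles near $\eta$ and putting $\overline h_\eta\colon R^{24}\to R^{24}$ into Smith normal form, one gets $\operatorname{mult}_{W_0}(D)=\operatorname{length}_R\operatorname{coker}(\overline h_\eta)\geq \dim_{k(\eta)}\operatorname{coker}(\overline h\otimes k(\eta))$, and since $\overline h$ is a map of rank-$24$ bundles the right-hand side equals $\dim_{k(\eta)}\ker(\overline h\otimes k(\eta))$. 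By Lemma~\ref{lem:thebeast}(\ref*{lem:thebeast:3}) this kernel is $\Hom_S(I_\eta,S/I_\eta)_{-1}/T$ with $\dim T=4$, so it suffices to prove $\dim_{k(\eta)}\Hom_S(I_\eta,S/I_\eta)_{-1}\geq 12$, i.e.\ that this holds on a dense open subset of $W_0$.

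To prove that inequality I would argue pointwise. Every closed point $p$ of $W_0$ lies both on the smoothable component $R^4_8$ and on $G^4_8$ (because $W_0\subset G_0\subset G^4_8$ and $W_0=G_0\cap R^4_8$), and by Theorem~\ref{thm:hd8} these are the only two irreducible components of $H^4_8$ and they are distinct. Hence $\mathcal{O}_{H^4_8,p}$ has at least two minimal primes, so it is not regular, while its Krull dimension is $\max(\dim R^4_8,\dim G^4_8)=\max(32,25)=32$; therefore $\dim_k\Hom_S(I_p,S/I_p)=\dim_k T_pH^4_8\geq 33$ for \emph{every} $p\in W_0$. Now restrict to the dense open $W_0\setminus(Z_1\cup Z_2)$. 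There $S/I_p$ is concentrated in degrees $0,1,2$ and $I_p$ is generated in degree $2$, so $\Hom_S(I_p,S/I_p)$ is concentrated in degrees $0,-1,-2$; moreover $\dim\Hom_S(I_p,S/I_p)_0=21$ exactly as in the proof of Lemma~\ref{lem:hom} (any $k$-linear map $I_2\to(S/I_p)_2$ is automatically $S$-linear since $I$ is quadric-generated), and $\dim\Hom_S(I_p,S/I_p)_{-2}=0$ because $p\notin Z_2$. Consequently $\dim\Hom_S(I_p,S/I_p)_{-1}\geq 33-21-0=12$ on this dense open, hence at $\eta$, which with the first paragraph yields $\operatorname{mult}_{W_0}(D)\geq 12-4=8$.

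The step that carries the real content is the uniform bound $\dim_k T_pH^4_8\geq 33$ at every point of $W_0$: a priori $\dim\operatorname{coker}(\overline h\otimes k(I))$ is only upper semicontinuous in $I$, so one cannot bound it below at the generic point of $W_0$ by computing at a single convenient ideal; the way around this is to observe that $W_0$ is contained in the intersection of the two components of $H^4_8$, forcing the local ring to be non-regular of Krull dimension $32$ everywhere along $W_0$ and hence the tangent space — and, after subtracting the fixed $21$-dimensional degree-$0$ part and using that the degree $-2$ part vanishes off $Z_2$ — the degree $-1$ part, to be at least $12$-dimensional at the generic point. The only delicate bookkeeping is that the degree decomposition of $\Hom_S(I_p,S/I_p)$ requires $p\notin Z_1\cup Z_2$, so one genuinely needs Lemma~\ref{lem:codim2} to know this holds on a dense, not merely nonempty, open subset of $W_0$; this is exactly the place where a careless argument would fail.
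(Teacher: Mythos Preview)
Your argument is correct and follows essentially the same route as the paper: show that a general point of $W_0$ avoids $Z_1\cup Z_2$, use the fact that such a point lies on two components of $H^4_8$ to force $\dim_k\Hom_S(I,S/I)\geq 33$, decompose by degree to get $\dim\Hom_{-1}\geq 12$ and hence $\dim\ker(\overline h\otimes k(I))\geq 8$, and then read off that $\det(\overline h)$ lies in the eighth power of the maximal ideal (the paper does this by lifting a basis adapted to the kernel, you phrase the same step via Smith normal form over the DVR $\mathcal O_{G_0,\eta}$). One small caution: you invoke Theorem~\ref{thm:hd8}, which is proved later in the paper; there is no circularity since its proof does not use this lemma, but it would be cleaner to cite only Proposition~\ref{prop:h48red} (which already gives $G^4_8\not\subset R^4_8$, hence a second component through every point of $W_0$).
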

\begin{proof}
By Lemma \ref{lem:divh}, we know that $\lvert W_0\rvert\subseteq \lvert D\rvert $.  By Lemma~\ref{lem:codim2}, a general point of $W_0$ does not belong to $Z_1\cup Z_2$.  Let $I$ be any such point.  Since $I$ is a singular point in $R^4_8$, $I$ has tangent space dimension at least $\dim(R_8^4) + 1 = 33$, and so the null space of $\overline h \otimes k(I)$ must have dimension at least $8$.

Choose 8 vectors from the null space as basis vectors, and any other 16 to complete a basis of the source of $\overline{h}\otimes k(I)$.  This basis in the quotient ring lifts to a basis in the local ring $\mathcal O_{G'_0, I}$.  When we represent the localization of the map $(\overline{h})_I$ as a matrix with respect to this basis we see that the first 8 columns belong to the maximal ideal $\m_I$ of $\mathcal O_{G'_0,I}$.   Thus $\det(\overline{h})$ belongs to $\m_I^8$, and in turn $D$ has multiplicity at least 8 at $I$.
\end{proof}

\begin{lemma}\label{lem:sheafD}
The ideal sheaf of $D$ is $(P^8)$ where $P$ is the Salmon-Turnbull Pfaffian.
\end{lemma}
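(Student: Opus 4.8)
The plan is to determine the divisor $D$ completely from numerical data: its linear equivalence class, the multiplicity bound along $W_0$, the fact that $\Pic(G_0)=\mathbb Z$, and the rigidity of low-degree $\GL_4$-invariant Pl\"ucker forms. Along the way I would first record two invariance facts. The subscheme $W_0 = G_0\cap R^4_8$ is $\GL_4$-invariant, because a linear change of coordinates on $\mathbb A^4$ carries radical ideals to radical ideals and hence preserves $R^4_8$, while it obviously preserves $G_0\cong\Gr(7,S_2)$. The divisor $D$ is $\GL_4$-invariant as well: every sheaf occurring in Lemma~\ref{lem:thebeast} is built from the equivariant universal ideal sheaf by $\GL_4$-equivariant operations, and the rank-$4$ subsheaf $\mathcal T$ spanned by the partial-derivative maps $t_i$ is carried to itself by any change of coordinates; thus $\overline{h}$ is $\GL_4$-equivariant, $\det(\overline{h})$ is a $\GL_4$-semi-invariant section, and its zero divisor on $G_0'$ — together with its unique extension across the set $Z_1\cup Z_2$ of codimension at least $2$ from Lemma~\ref{lem:codim2} — is $\GL_4$-invariant.

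The heart of the argument is then a short computation with divisor classes. Write $W_0\sim eH$ with $e\in\mathbb Z$. Since $W_0$ is a nonzero effective divisor, $e\ge 1$. By Lemma~\ref{lem:mult8} the divisor $D':=D-8W_0$ is effective, and by Corollary~\ref{cor:16H} it has class $(16-8e)H$; effectivity forces $16-8e\ge 0$, i.e.\ $e\le 2$. Moreover $e=1$ is impossible, for then $W_0$ would be cut out by a Pl\"ucker form of degree $1$ with $\GL_4$-invariant vanishing locus, contradicting Lemma~\ref{lem:mindeg}. Hence $e=2$, so $D'\sim 0$; since $G_0$ is projective and integral the only effective divisor in the trivial class is $0$, and therefore $D=8W_0$.

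To conclude, note that $W_0$ is an effective, reduced, irreducible divisor in the class $2H$, so the section of $\mathcal O_{G_0}(2H)$ cutting it out is a Pl\"ucker form of degree $2$; it is squarefree because $W_0$ is reduced, irreducible because $W_0$ is irreducible, and its vanishing locus is $\GL_4$-invariant. Over $\mathbb C$, Lemma~\ref{lem:mindeg2} identifies this form with the Salmon-Turnbull Pfaffian $P$ up to a scalar, and Lemma~\ref{lem:mindeg} confirms that $V(P)$ is reduced and irreducible, so $W_0 = V(P)$. Consequently $D = 8V(P) = V(P^8)$ as divisors on the smooth variety $G_0$, which is exactly the assertion that the ideal sheaf of $D$ is $(P^8)$.

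I expect the only genuinely delicate point to be the equivariance bookkeeping used above: checking that $D$ is $\GL_4$-invariant as a closed subscheme, not merely as a closed set, and that the class $16H$ computed on $G_0'$ by means of the test curve $\tau$ really is its class on all of $G_0$. Both follow from the equivariance of the construction in Lemma~\ref{lem:thebeast} together with the codimension estimate of Lemma~\ref{lem:codim2}; once these are granted, the remaining steps are elementary.
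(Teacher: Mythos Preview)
Your proof is correct and follows essentially the same route as the paper's: both use $D\sim 16H$, the multiplicity-$8$ bound along $W_0$, and the $\GL_4$-invariance results (Lemmas~\ref{lem:mindeg} and~\ref{lem:mindeg2}) to pin down the defining form as $P^8$. The paper phrases this in terms of the square-free part of the degree-$16$ defining form, while you phrase it via the effective divisor $D-8W_0\sim(16-8e)H$, but the underlying argument is the same and your version spells out the $\GL_4$-invariance of $D$ and $W_0$ more carefully than the paper does.
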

\begin{proof} 
Since $D$ is a divisor on $G_0$ its defining ideal in the homogeneous coordinate ring of the Pl\"ucker embedding of $G_0$ is generated by a single element $f$ of degree $16$ in the Pl\"ucker coordinates. If $g$ is the square-free part of $f$ then Lemma~\ref{lem:mult8} shows that $g$ has degree at most $2$. Since $D$ is invariant under linear changes of variables, it follows from  Lemmas~\ref{lem:mindeg} and~\ref{lem:mindeg2} that $g=P$ and $f=P^8$.
\end{proof}

By combining Lemmas~\ref{lem:codim1}, \ref{lem:divh}, and~\ref{lem:sheafD} we have have now proven our descriptions of $W_0$ and $W$:

\begin{thm}\label{thm:final}
The subscheme $W_0$ is defined by $P$.
\end{thm}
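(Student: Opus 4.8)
The plan is to assemble the divisor-theoretic facts proved in Lemmas~\ref{lem:codim1}, \ref{lem:divh} and~\ref{lem:sheafD} with the unique factorization property of the Pl\"ucker coordinate ring of $G_0\cong\Gr(7,S_2)\cong\Gr(3,S_2^*)$. Write $A=R/J$ for this coordinate ring, with $R=k[p_{ij\ell}]$ and $J$ the ideal of Pl\"ucker relations as in the proof of Lemma~\ref{lem:mindeg}, and let $H$ be the hyperplane class. Since $G_0$ is a Grassmannian it is smooth and projectively normal with $\Pic(G_0)=\mathbb Z\cdot[H]$; hence $\Spec A$ is normal with trivial divisor class group, so $A$ is a unique factorization domain.

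First I would observe that, by Lemma~\ref{lem:codim1}, $W_0$ is an integral divisor on $G_0$, so its homogeneous ideal $\mathfrak p\subset A$ is a height-one prime, and therefore $\mathfrak p=(g)$ for a single irreducible homogeneous $g\in A$. Next, Lemma~\ref{lem:divh} gives $W_0\subseteq|D|$, while Lemma~\ref{lem:sheafD} identifies the homogeneous ideal of $D$ with $(P^8)$; hence $|D|=V(P)$, so $P$ vanishes on $W_0$, i.e.\ $g\mid P$ in $A$ and $\deg g\le\deg P=2$. It remains to rule out $\deg g=1$. For this I would use that $W_0=G_0\cap R^4_8$ is stable under the $\GL_4$-action induced by linear changes of coordinates on $S$: this action preserves $G_0$ because it acts by graded automorphisms of $S$, and it preserves $R^4_8$ because it sends ideals of distinct points to ideals of distinct points. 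Consequently $\mathfrak p$ is $\GL_4$-stable, so for every $\tau\in\GL_4$ the form $\tau\cdot g$ again generates $\mathfrak p$, forcing $\tau\cdot g=\chi(\tau)\,g$ for a character $\chi$ of the connected group $\GL_4$; thus $\langle g\rangle$ is a one-dimensional $\GL_4$-subrepresentation of $A_{\deg g}$. If $\deg g=1$, then $J_1=0$ gives $A_1=\Sym_1(W)$, so $V(g)\subset\mathbb P(W)$ would be a $\GL_4$-invariant hyperplane, contradicting the first assertion of Lemma~\ref{lem:mindeg}. Hence $\deg g=2$, and since $g\mid P$ with $\deg P=2$ we conclude $P=cg$ for some $c\in k^\times$, so $\mathfrak p=(P)$. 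Because $A$ is a UFD and $P$ is irreducible in it, $V(P)$ is reduced, so the equality $W_0=V(P)$ holds scheme-theoretically and not merely set-theoretically.

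The step I expect to be the main obstacle is keeping track of which ring one works in: Lemma~\ref{lem:mindeg} proves $P$ irreducible in the ambient polynomial ring $R$, whereas the theorem requires information about the height-one prime of $W_0$ inside the Pl\"ucker ring $A=R/J$, whose relations are also generated in degree $2$. This is precisely why the argument needs both the unique factorization property of $A$, to make $\mathfrak p$ principal of a definite degree, and the $\GL_4$-equivariance, to exclude a degree-one factor that might otherwise appear only after reducing modulo $J$. Granting these two points, the theorem is a direct concatenation of Lemmas~\ref{lem:codim1}, \ref{lem:divh} and~\ref{lem:sheafD}.
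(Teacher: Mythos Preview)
Your proof is correct and follows essentially the same route as the paper, which simply records the theorem as the conjunction of Lemmas~\ref{lem:codim1}, \ref{lem:divh} and~\ref{lem:sheafD}. You have unpacked that conjunction carefully: $W_0$ is a prime divisor, hence cut out by a single irreducible form $g$ in the Pl\"ucker UFD; $W_0\subset\lvert D\rvert=V(P)$ forces $g\mid P$; and $\GL_4$-invariance of $W_0$ together with Lemma~\ref{lem:mindeg} excludes $\deg g=1$, so $g$ and $P$ agree up to a scalar.

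The one place where you add something to the paper is your explicit distinction between irreducibility of $P$ in $R=k[p_{ij\ell}]$ versus in $A=R/J$. This is a reasonable point to flag, but it is not actually an obstacle: the argument of Lemma~\ref{lem:mindeg} shows there are no $\GL_4$ semi-invariants in degree~$1$ of $R$, and since $J_1=0$ the same holds in $A$. So whether one argues, as the paper implicitly does, that $P$ is already irreducible in $A$ and hence $V(P)$ is itself a prime divisor containing the prime divisor $W_0$, or argues as you do by pulling the generator $g$ of the ideal of $W_0$ and showing it cannot be linear, one is invoking exactly the same representation-theoretic fact. Your version is a bit more self-contained; the paper's is terser but relies on the reader supplying this step.
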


\subsection{Extension to fields other than \texorpdfstring{$\mathbb C$}{C}} \label{subsec:extension-other-fields}
Now we return to the case that $k$ is a field, not necessarily algebraically closed, of characteristic not~$2$ or~$3$.

Recall that if $M_\lambda$ is any monomial ideal in $G_0$ then there are local
coordinates $c^m_{m'}$ on $U_\lambda \cap H^4_8$.  Moreover there is a
surjection $\pi\colon R^4_8 \cap U_\lambda \to W_0 \cap U_\lambda$, and there is
a rational map $\phi\colon (\mathbb A^4)^8\mathord\sslash S_8 \dasharrow R^4_8\cap U_\lambda$ given by $c^m_{m'}=\frac{\Delta_{\lambda -m'+m}}{\Delta_{\lambda}}$ whose image is dense in $R^4_8\cap U_\lambda$.  
\begin{lemma}\label{lem:vanish}
With $U_\lambda$ as above, the function $P\circ \pi$ vanishes identically on $R^4_8\cap U_\lambda$ over an arbitrary field $k$.
\end{lemma}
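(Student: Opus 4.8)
The plan is to reduce the statement over an arbitrary field to the case $k=\mathbb C$, which is already covered by Theorem~\ref{thm:final}, by exploiting the fact that every object involved is defined by formulas with coefficients in $\mathbb Z[1/6]$.

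First I would observe that we may assume $k$ algebraically closed: vanishing of a regular function on the reduced scheme $R^4_8\cap U_\lambda$ is preserved and reflected by the faithfully flat base change $k\to\bar k$, and the density statement used below is geometric. Next I would record that the three ingredients descend to $\mathbb Z[1/6]$. On $U_\lambda$ the projection $\pi$ is the morphism which retains exactly the coordinates $c^m_{m'}$ with $\deg m=\deg m'=2$ (the $(1,1,1,1)$-leading forms of the generators, the remaining $c^m_{m'}$ being set to zero since $\lambda$ has no standard monomials of degree $\geq 3$), so $\pi$ is defined over $\mathbb Z$. On $G_0\cap U_\lambda$, which is a standard affine chart of $\Gr(7,S_2)$ with coordinates $c^m_{m'}$ ($\deg m=\deg m'=2$), the Salmon--Turnbull Pfaffian is the Pfaffian of the matrix of $\langle,\rangle_I$ from Lemma~\ref{lem:Newgl4inv}, whose entries are the $c^m_{m'}$ together with $0$'s and $1$'s; hence $P\circ\pi$ is a polynomial in the $c^m_{m'}$ with coefficients in $\mathbb Z$. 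Finally the formulas $c^m_{m'}=\Delta_{\lambda-m'+m}/\Delta_\lambda$ defining $\phi$ express the $c^m_{m'}$ as ratios of integer polynomials in the point coordinates $q_i^{(j)}$. Composing, $P\circ\pi\circ\phi$ is a rational function on $(\mathbb A^4)^8\mathord\sslash S_8$ of the form $N/\Delta_\lambda^{N_0}$ with $N\in\mathbb Z[1/6][q_i^{(j)}]$, and this polynomial $N$ does not depend on the base field.

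The crucial step is the case $k=\mathbb C$. There $W_0=G_0\cap R^4_8$ is cut out by $P$ by Theorem~\ref{thm:final}, and $\pi$ carries $R^4_8\cap U_\lambda$ into $W_0$ by Lemma~\ref{lem:codim1} (alternatively because $\initial_{(1,1,1,1)}(I)$ is a flat $\GL_4$-equivariant degeneration of $I$, and $R^4_8$ is closed and $\GL_4$-stable). Hence $P\circ\pi$ vanishes on $R^4_8\cap U_\lambda$ over $\mathbb C$, and pulling back along $\phi$, whose image is dense in $R^4_8\cap U_\lambda$, gives $P\circ\pi\circ\phi=0$ as a rational function over $\mathbb C$. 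So $N$ vanishes at every $\mathbb C$-point; since $\mathbb C$ is infinite this forces $N=0$ in $\mathbb C[q_i^{(j)}]$, and therefore $N=0$ already in $\mathbb Z[1/6][q_i^{(j)}]$.

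Finally, for an arbitrary algebraically closed field $k$ of characteristic not $2$ or $3$, base-changing the identity $N=0$ along $\mathbb Z[1/6]\to k$ gives $N=0$ in $k[q_i^{(j)}]$, so $P\circ\pi\circ\phi\equiv 0$ over $k$ as well; since the image of $\phi$ is dense in the integral scheme $R^4_{8}\cap U_{\lambda}$ over $k$, the regular function $P\circ\pi$ vanishes identically on it. I expect the only real work to be the bookkeeping of the second paragraph — checking that $\pi$, that $P$ as a function on the Grassmannian chart, and that $\phi$ really are given by the claimed $\mathbb Z[1/6]$-formulas independent of $k$, and that forming $U_\lambda$, the closure $R^4_8$, and the image of $\phi$ all commute with the base change $\mathbb Z[1/6]\to k$. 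Granting that, the argument is just the standard principle that a polynomial identity with $\mathbb Z[1/6]$-coefficients that holds over $\mathbb C$ holds over every field of characteristic $\neq 2,3$.
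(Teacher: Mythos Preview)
Your proposal is correct and follows essentially the same approach as the paper: observe that $P\circ\pi\circ\phi$ is a rational function with integer (or $\mathbb Z[1/6]$) coefficients, deduce from the $\mathbb C$-case (Theorem~\ref{thm:final}) that it vanishes identically as an element of $\mathbb Z[q_i^{(j)}][\Delta_\lambda^{-1}]$, and then base-change to $k$. The paper's proof is a three-line version of exactly this argument; your write-up simply makes the bookkeeping (integrality of $\pi$, $P$, $\phi$ and the density step) more explicit.
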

\begin{proof}
The composition $P \circ \pi \circ \phi$ is a rational function with integer coefficients. Theorem~\ref{thm:final} proves that $P\circ\pi\circ\phi = 0$ in $\mathbb C[q^{(j)}_i][\Delta_{\lambda}^{-1}]$. Therefore, $P\circ\pi\circ\phi = 0$ in $\mathbb Z[q^{(j)}_i][\Delta_{\lambda}^{-1}]$.
\end{proof}

\begin{thm}\label{thm:final-all-char}
The following irreducible subsets of $G_0$ coincide:
\begin{enumerate}
\item{$W_0$} 
\item{$V(P)$, the vanishing of the pullback to $G$ of the Salmon-Turnbull Pfaffian.} 
\item{The homogeneous ideals with Hilbert function $(1,4,3)$ which are flat limits of ideals of distinct points.}
\end{enumerate}
As a consequence, if we let $\pi|_{G}\colon G \to G_0$ be the restriction of the projection from Lemma~\ref{lem:codim1} then $W=V(P\circ \pi|_{G})$.
\end{thm}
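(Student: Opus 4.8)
The plan is to establish the equality $(1)=(3)$, the two inclusions $(1)\subseteq(2)$ and $(2)\subseteq(1)$, and then to read off $W=V(P\circ\pi|_G)$ from Lemma~\ref{lem:G0andG}. The identification $(1)=(3)$ is immediate from the definitions: $G_0=\mathcal H^4_{(1,4,3)}$ parametrizes exactly the homogeneous ideals of Hilbert function $(1,4,3)$, and $R^4_8$ is by definition the closure in $H^4_8$ of the locus of ideals of $8$ distinct points, so a point of $W_0=G_0\cap R^4_8$ is precisely a homogeneous ideal with Hilbert function $(1,4,3)$ that is a flat limit of ideals of distinct points.

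For $(1)\subseteq(2)$ I would work on the monomial charts of Section~\ref{subsec:coordinates}. For each monomial ideal $M_\lambda\in G_0$ there is an open set $U_\lambda\subseteq H^4_8$ and, as recalled in Section~\ref{subsec:extension-other-fields}, a surjective morphism $\pi\colon R^4_8\cap U_\lambda\twoheadrightarrow W_0\cap U_\lambda$. By Lemma~\ref{lem:vanish}, $P\circ\pi$ vanishes identically on $R^4_8\cap U_\lambda$ over an arbitrary field, so surjectivity of $\pi$ forces $P$ to vanish on $W_0\cap U_\lambda$. Letting $\lambda$ range over all monomial ideals of $G_0$ covers $G_0$ and gives $W_0\subseteq V(P)$.

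The content lies in $(2)\subseteq(1)$, i.e.\ in upgrading the inclusion $W_0\subseteq V(P)$ to an equality over a field that need not be $\mathbb C$, so that the representation theory of Lemmas~\ref{lem:mindeg2} and~\ref{lem:sheafD} is unavailable. I would compare divisor classes on $G_0\cong\Gr(7,S_2)$, which is smooth and projective with $\Pic(G_0)=\mathbb Z\cdot H$, where $H$ is the Pl\"ucker hyperplane class. By Lemma~\ref{lem:codim1}, $W_0$ is a prime divisor, say $W_0\sim m_0H$; choose a section of $\mathcal O_{G_0}(m_0H)$ cutting it out. Because $W_0=G_0\cap R^4_8$ is stable under the $\GL_4$-action (a linear change of variables on $S$ carries ideals of distinct points to ideals of distinct points, and the embedding $G_0\hookrightarrow H^4_8$ is $\GL_4$-equivariant), this section spans a $\GL_4$-stable line and its zero locus $W_0$ is $\GL_4$-invariant; in particular $m_0\neq 1$ by Lemma~\ref{lem:mindeg}, so $m_0\ge 2$. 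On the other hand $P$ is a nonzero degree-$2$ element of the Pl\"ucker coordinate ring — it does not vanish identically on $G_0$, by Theorem~\ref{thm:final} over $\mathbb C$, and in general because one can exhibit an explicit system of three quadrics on which the Salmon--Turnbull Pfaffian is a unit — so $V(P)$ is an effective divisor of class $2H$ containing $W_0$, which forces $m_0\le 2$. Hence $m_0=2$, $V(P)-W_0$ is effective of class $0$ and therefore empty, and $V(P)=W_0$. This closes the chain, and shows en route that $V(P)$ is irreducible, as the statement requires.

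Granting $(1)=(2)=(3)$, the last assertion follows formally: by Lemma~\ref{lem:G0andG} there is an isomorphism $G\cong G_0\times\mathbb A^4$ identifying $W$ with $W_0\times\mathbb A^4$ and $\pi|_G$ with the projection to $G_0$, so $W=(\pi|_G)^{-1}(W_0)=(\pi|_G)^{-1}(V(P))=V(P\circ\pi|_G)$. I expect the only real obstacle to be the step $(2)\subseteq(1)$ in positive characteristic, and within it the two delicate points are: controlling the minimal Pl\"ucker degree $m_0$ of the prime divisor $W_0$ (this is where $\GL_4$-invariance together with Lemma~\ref{lem:mindeg} is essential, to rule out $m_0=1$), and checking that $P$ does not become a combination of the Pl\"ucker relations after reduction modulo $p$, so that $V(P)$ stays a proper hypersurface section; both are handled by combining $\GL_4$-equivariance with the characteristic-free statements of Section~\ref{subsec:gl4} and a straightforward spreading-out over $\Spec\mathbb Z[1/6]$.
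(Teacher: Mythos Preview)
Your proof is correct and follows the paper's approach: Lemma~\ref{lem:vanish} for $W_0\subseteq V(P)$, the explicit ideal $J$ of Proposition~\ref{prop:h48red} (where $P(J)=1$ in every characteristic) for nontriviality of $P$, and Lemmas~\ref{lem:codim1} and~\ref{lem:mindeg} to force equality. The only organizational difference is in the last step: the paper invokes the irreducibility of $P$ (the second conclusion of Lemma~\ref{lem:mindeg}) to see that $V(P)$ is already a prime divisor, so the containment $W_0\subseteq V(P)$ of prime divisors yields equality immediately, whereas you bound the Pl\"ucker degree of $W_0$ from below via $\GL_4$-invariance and from above via $W_0\subseteq V(P)$; these are two packagings of the same lemma, and your version has the mild bonus of recovering irreducibility of $V(P)$ as output rather than input.
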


\begin{proof}
For other fields, note that for the ideal $J$ of Proposition \ref{prop:h48red} with $d=4$, we have that $P\circ \pi(J)=P(J)=1$ and thus $P\circ \pi$ does not vanish uniformly on $G$ in any characteristic.  By the previous lemma, $P\circ \pi$ vanishes uniformly on $R^4_8$ for any $k$.  Thus $W\subseteq V(P\circ \pi|_{G})$.  As both $W$ and $V(P\circ \pi|_{G})$ are integral closed subschemes of codimension $1$ in $G$, they are equal.
\end{proof}

\ifx\thepage\undefined\def\jobname{hilb}\fi
\cvs $Date: 2008/06/17 12:20:04 $ $Author: dustin $
\section{Proofs of main results}\label{sec:pfofmain}\cvsversion

In this section, $k$ denotes a field of characteristic not~$2$ or~$3$

\begin{proof}[Proof of Theorem~\ref{thm:h48red}]
The irreducibility of $H^d_n$ when $d$ is at most $3$ or $n$ is at most $7$ follows for an algebraically closed field from Theorem~\ref{thm:smoothable-summary}. For a non-algebraically closed field, the Hilbert scheme is irreducible because it is irreducible after passing to the algebraic closure. Proposition~\ref{prop:stdgraded-grassmannian-bundle} and the same argument as in Lemma~\ref{lem:G0andG} show that when $d$ is at least $4$, $G_8^d$ is irreducible and $(8d-7)$-dimensional, and Proposition~\ref{prop:h48red} shows that it is a separate component. Theorem~\ref{thm:smoothable-summary} shows that there are no other components, again, by passing to the algebraic closure if necessary. 
\end{proof}

\begin{proof}[Proof of Theorem~\ref{thm:multigrad-red}]
This follows from Theorem~\ref{thm:summary-stdgraded}.
\end{proof}

\begin{proof}[Proof of Theorem~\ref{thm:waldo}]
The statement that $R^4_8\cap G^4_8$ is a prime divisor on $G^4_8$ is proved in Lemma~\ref{lem:codim1}.  The equivalence of the set-theoretic description and the local equation description follows from Lemma~\ref{lem:Newgl4inv}.  Theorem~\ref{thm:final-all-char} proves that the Salmon-Turnbull Pfaffian is the correct local equation.
\end{proof}

\begin{proof}[Proof of Theorem~\ref{thm:eqns-radical-component}]
Let $M_\lambda$ be some monomial ideal and consider the monomial chart $U_\lambda$.  If $M_\lambda$ does not have Hilbert function $(1,4,3)$ then $U_{\lambda}\cap G^4_8=\varnothing$ so that the zero ideals will cut out $R^4_8$.  If $M_\lambda$ has Hilbert function $(1,4,3)$, then Lemma~\ref{lem:vanish}
and Theorem~\ref{thm:final-all-char} show that the zero set of the pullback of the Pfaffian is precisely $R^4_8\cap U_\lambda$.
\end{proof}

\ifx\thepage\undefined\def\jobname{hilb}\fi
\cvs $Date: 2008/06/16 15:59:38 $ $Author: dustin $
\section{Open questions}\label{sec:OpenQuest}\cvsversion

The motivating goal behind this work is understanding the smoothable component of the Hilbert scheme as explicitly as possible, and not just as the closure of a certain set. This can be phrased more abstractly by asking what functor the smoothable component represents or more concretely by describing those algebras which occur in the smoothable component. In this paper we have done the latter for $n$ at most $8$. The following are natural further questions to ask:

\begin{itemize}
	\item For $d$ greater than $4$, which algebras with Hilbert function $(1,d,3)$ are smoothable? Generically, such algebras are not smoothable.  Computer experiments lead us to conjecture that, for smoothable algebras, the analogue of the skew symmetric matrix in Theorem~\ref{thm:waldo} has rank at most $2d+2$. However, a dimension count shows that this rank condition alone is not sufficient for such an algebra to be smoothable. What are the other conditions?
	
	\item What is the smallest $n$ such that $H^3_n$ is reducible? We have shown $H^3_8$ is irreducible and Iarrobino has shown that $H^3_{78}$ is reducible~\cite[Example 3]{iarrobino-compressed}.

	\item Is $H^d_n$ ever non-reduced? What is the smallest example? Does it ever have generically non-reduced components?
\end{itemize}

\section*{Acknowledgments}
We thank David Eisenbud, Bjorn Poonen, and Bernd Sturmfels for many insightful discussions, guidance, and key suggestions. We would also like to thank Eric Babson, Jonah Blasiak, Mark Haiman, Anthony Iarrobino, Diane Maclagan, Scott Nollet, and  Mike Stillman for helpful conversations.  Macaulay2~\cite{macaulay2} and Singular~\cite{singular} were used for experimentation.

\bibliographystyle{alpha}
\bibliography{hilb}

\fi
\cvs  $Date: 2008/06/19 20:52:38 $ $Author: dustin $
\section{Introduction}\label{sec:intro}\cvsversion

The Hilbert scheme $H^d_n$ of $n$ points in affine $d$-space parametrizes $0$-dimensional, degree~$n$ subschemes of $\mathbb A^d$.  Equivalently, the $k$-valued points of $H^d_n$ parametrize ideals $I\subset S=k[x_1, \dots, x_d]$ such that $S/I$ is an $n$-dimensional vector space over $k$. The \defi{smoothable component} $R^d_n\subset H^d_n$ is the closure of the set of ideals of distinct points.
The motivating problem of this paper is characterizing the ideals which
lie in the smoothable component, i.e.\ the $0$-dimensional subschemes which
are limits of distinct points. We determine the
components of the schemes $H^d_n$ for $n \leq 8$, and find explicit equations defining $R^4_8\subset H^4_8$.

We assume that $k$ is a field of characteristic not~$2$ or~$3$.
\begin{thm}\label{thm:irred}\label{thm:h48red}\label{thm:hd8}
Suppose $n$ is at most $8$ and $d$ is any positive integer. Then the Hilbert scheme $H_n^d$ is reducible if and only if $n = 8$ and $d \geq 4$, in which case it consists of exactly two irreducible components:
\begin{enumerate}
\item the smoothable component, of dimension $8d$
\item a component denoted $G^d_8$, of dimension $8d-7$, which consists
of local algebras isomorphic to homogeneous algebras with
Hilbert function $(1,4,3)$.
\end{enumerate}
\end{thm}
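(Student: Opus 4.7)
The plan is to reduce Theorem~\ref{thm:hd8} to a classification of length-$\le 8$ local $k$-algebras. Every $0$-dimensional subscheme of $\mathbb A^d$ decomposes as a disjoint union of its local factors, and this decomposition is preserved by flat one-parameter deformations, so $[I] \in R^d_n$ if and only if each local factor of $S/I$ is individually smoothable. The global statement of Theorem~\ref{thm:hd8} therefore reduces to the following two local questions: which local $k$-algebras of length at most $8$ are smoothable, and for which ambient dimensions $d$ do the non-smoothable ones embed?

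With this reduction in hand, I would stratify the local algebras of length $\le 8$ by the Hilbert function $h = (h_0, h_1, \ldots)$ of the associated graded ring; note that the embedding dimension is $h_1$, so the stratum embeds into $H^d_n$ only when $d \ge h_1$. For length $\le 7$, and for length $8$ with Hilbert function other than $(1,4,3)$, I would exhibit smoothability stratum by stratum: either by writing down an explicit one-parameter deformation from the algebra to a product of shorter local algebras (inducting on length), or by using the multigraded Hilbert scheme analysis advertised in the abstract to reduce to a graded model and smooth the graded model. In particular, when $d \le 3$ every length-$8$ local algebra has embedding dimension at most $3$, so this handles the irreducibility of $H^d_8$ for $d \le 3$, and similarly for every $H^d_n$ with $n \le 7$.

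The only remaining stratum is Hilbert function $(1,4,3)$ in embedding dimension $h_1 = 4$, which forces $d \ge 4$. I would define $G^d_8 \subset H^d_8$ as the closure of the locus of ideals whose associated graded algebra has Hilbert function $(1,4,3)$. A generic such ideal is homogeneous in four variables, cut out by seven quadrics, with a minimal free resolution determined up to shift by the Hilbert function. Using this resolution I would compute the tangent space
\[
\dim_k \Hom_S(I, S/I) \;=\; 8d - 7
\]
at a generic $[I]$. Since $8d-7 < 8d = \dim R^d_8$, every irreducible component of $H^d_8$ containing $[I]$ has dimension at most $8d-7$, and in particular $[I] \notin R^d_8$; this simultaneously exhibits $G^d_8$ as a separate component and pins down its dimension. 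Irreducibility of $G^d_8$ follows from its fibration over an open subscheme of $\Gr(7, k[x_1, \ldots, x_4]_2)$, with fibre coordinates recording the support point $p \in \mathbb A^d$ and the $4$-dimensional tangent subspace at $p$ which houses the embedding. A generic point of $G^d_8$ then corresponds to a local algebra which is actually isomorphic to a homogeneous algebra with Hilbert function $(1,4,3)$, matching the description in part (2) of the theorem.

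The main obstacle, and the step whose execution drives everything else, is the tangent space calculation $\dim_k \Hom_S(I, S/I) = 8d - 7$ at a generic graded $(1,4,3)$ ideal. This computation should reduce, via the minimal free resolution of a generic $(1,4,3)$ ideal, to a linear algebra problem controlled by the three-dimensional socle of $S/I$. The strict inequality $8d - 7 < 8d$ — rather than any incidental bookkeeping equality — is what forces $G^d_8 \ne R^d_8$, and it is only at this point that the reducibility in Theorem~\ref{thm:hd8} becomes visible; showing the inequality, and verifying that no other length-$8$ Hilbert function also produces a non-smoothable stratum, is the crux of the argument.
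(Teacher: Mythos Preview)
Your outline matches the paper's strategy closely: reduce to local algebras (Lemmas~4.1--4.3 in the paper), stratify by Hilbert function, verify smoothability stratum by stratum for every Hilbert function except $(1,4,3)$ (Sections~3--4 and Table~4.1), and separate off $G^d_8$ via a tangent-space count at a single $(1,4,3)$ ideal (Proposition~5.1). The stratum-by-stratum smoothability check is where most of the actual labour lives; the paper sometimes deforms a generic ideal as you suggest, but for several strata (e.g.\ $(1,3,3)$, $(1,3,3,1)$, and the two pieces of $(1,3,2,1,1)$) it instead locates one smoothable ideal that is simultaneously a smooth point of $H^d_n$.

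There is one concrete gap in your description of $G^d_8$ when $d>4$. A generic ideal in $G^d_8$ is \emph{not} homogeneous after translation: the $d-4$ ``linear'' generators take the form $\ell_i + q_i$ with $q_i \in S_2/I_2 \cong k^3$, and these quadratic tails are moduli that your fibre data (support point and $4$-dimensional tangent subspace) does not record. Your parameter count therefore yields only $d + 4(d-4) + 21 = 5d+5$, which falls short of the tangent-space bound $8d-7$ by exactly $3(d-4)$. Until you account for these parameters you cannot conclude that $G^d_8$ fills out an entire irreducible component, nor that its dimension is $8d-7$; the paper handles this (tersely) by combining Proposition~3.1 with the argument of Lemma~5.3.
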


It is known that for $d$ at least $3$ and $n$ sufficiently large the Hilbert scheme of points is always reducible~\cite{iarrobino-red}.
The fact that the Hilbert scheme $H^4_8$ has at least two components appears
in~\cite{iarrobino-small-tangent-space}. 
In contrast, for the Hilbert scheme of points in the plane ($d=2$), the smoothable component is the only component~\cite{fogarty}.

To show that the Hilbert scheme of $n$ points is irreducible, it suffices to
show that each isomorphism type of local algebras of rank at most $n$ is
smoothable, and
for $n$ at most $6$ there are finitely many isomorphism types of local algebras.
In contrast, there are infinitely many non-isomorphic local algebras of degree $7$. Relying on a classification of the finitely many isomorphism types in degree 6, \cite{mazzola} proves the irreducibility of $H^d_n$ for $n=7$.

In our approach, a coarser geometric decomposition replaces most of the need for
classification. We divide the local algebras in $H^d_n$ into sets $H^d_{\vec h}$ by their Hilbert function $\vec h$, and we determine which components of these sets are smoothable.  The main advantage to this approach is that there are fewer Hilbert functions than isomorphism classes, and this enables us to extend the smoothability results of~\cite{mazzola} up to degree $8$.

In order to determine the components of $H^d_{\vec h}$, we first determine the components of the \defi{standard graded Hilbert scheme} $\mathcal H^d_{\vec h}$, which parametrizes homogeneous ideals with Hilbert function $\vec h$.   By considering the map $\pi_{\vec h}\colon H^d_{\vec h} \rightarrow \mathcal H^d_{\vec h}$ which sends a local algebra to its associated graded ring, we relate the components of $\mathcal H^d_{\vec h}$ to those of $H^d_{\vec h}$.  The study of standard graded Hilbert schemes leads to the following analogue of Theorem~\ref{thm:irred}:

\begin{thm}\label{thm:multigrad-red}
Let $\mathcal H_{\vec h}^d$ be the standard graded Hilbert scheme for Hilbert
function $\vec h$, where $\sum h_i\leq 8$.  Then $\mathcal H_{\vec h}^d$ is reducible if and only if $\vec h = (1,3,2,1)$ or $\vec h = (1,4,2,1)$, in which case it
has exactly two irreducible components. In particular, $\mathcal
H^3_{(1,3,2,1)}$ is the minimal example of a reducible standard  graded Hilbert
scheme.
\end{thm}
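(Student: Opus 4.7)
The plan is to enumerate all Hilbert functions $\vec h$ with $\sum_i h_i \leq 8$ and analyze the components of $\mathcal H^d_{\vec h}$ case by case. First I reduce to the situation $h_1 = d$: when $h_1 = e < d$, every ideal in $\mathcal H^d_{\vec h}$ contains a canonical $(d-e)$-dimensional space of linear forms, and forgetting these linear forms exhibits $\mathcal H^d_{\vec h}$ as a Grassmannian bundle over $\mathcal H^e_{\vec h}$, so the components correspond. After this reduction, $h_0 = 1$, $h_1 = d$, the constraint $\sum h_i \leq 8$, and Macaulay's growth bound $h_{i+1} \leq h_i^{\langle i \rangle}$ leave a short explicit list of candidate Hilbert functions, all with $d \leq 5$.

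For the non-exceptional Hilbert functions on this list, I expect irreducibility to follow from one of several standard mechanisms. If $\vec h$ has the form $(1, d, h_2, 0, \ldots, 0)$, then $\mathcal H^d_{\vec h}$ is literally the Grassmannian $\Gr(h_2, \binom{d+1}{2})$. If the tail of $\vec h$ is small, $\mathcal H^d_{\vec h}$ is a Grassmannian bundle over a smaller, already-handled Hilbert scheme obtained by truncation. When $\vec h$ is symmetric, apolarity identifies an open dense subset of $\mathcal H^d_{\vec h}$ (the locus of Gorenstein ideals) with an open subset of a projective space of degree-$s$ forms modulo scaling. In the remaining cases, a direct parametrization by the degree-$2$ piece $I_2$, together with a finite-dimensional choice for $I_3$, exhibits $\mathcal H^d_{\vec h}$ as an irreducible fibration over a Grassmannian stratum.

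The heart of the argument is the analysis of the two exceptional Hilbert functions $(1,3,2,1)$ and $(1,4,2,1)$. My plan is to stratify each $\mathcal H^d_{\vec h}$ by the rank of the multiplication map $\mu_I \colon I_2 \otimes S_1 \to S_3$. One natural candidate component, $C_1$, is the closure of the locus where this rank is as large as possible, equivalently where $I$ is generated in degree $2$ (so $I_3 = S_1 \cdot I_2$). A natural candidate for the second component, $C_2$, is the closure of the locus where the rank drops, so that $I$ has an extra generator in degree $3$, equivalently where the socle of $S/I$ has a nonzero degree-$2$ piece. I will parametrize each stratum and compute its dimension: $C_1$ is cut out of the Grassmannian $\Gr(\dim I_2, S_2)$ by the condition $\operatorname{rank}(\mu_I) \le \dim S_3 - 1$, while $C_2$ is fibered over a deeper rank-drop stratum by a projective space that parametrizes the extra cubic.

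The \emph{main obstacle} is showing that $C_1$ and $C_2$ are genuinely two distinct components, rather than one lying in the closure of the other. I plan to address this by using the rank of $\mu_I$, or equivalently the socle Hilbert function of $S/I$, as a semi-continuous invariant: on the generic point of $C_1$ the socle is concentrated in top degree, while on the generic point of $C_2$ it has a nontrivial degree-$2$ contribution, and semi-continuity in flat families forces these specialization patterns to be incompatible. Once this obstruction-free distinction is established, a dimension count verifies that no further components exist and completes the proof, the remaining cases having been handled by the enumeration above.
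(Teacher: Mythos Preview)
Your overall enumeration strategy and the reduction to $h_1=d$ match the paper's Section~3 closely, and your proposed candidates $C_1$ (ideals generated in degree~$2$) and $C_2$ (ideals needing an extra cubic) do coincide with the paper's two components $\mathcal Q_d$ and $\mathcal P_d$ for $\vec h=(1,d,2,1)$.

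However, there is a genuine gap at exactly the point you flag as the ``main obstacle.'' Your semicontinuity argument runs in the wrong direction. The degree-$2$ socle dimension (equivalently, the corank of $\mu_I\colon I_2\otimes S_1\to I_3$) is \emph{upper} semicontinuous: it can only jump up under specialization. Thus a flat family whose generic member lies in $C_1$ (socle concentrated in top degree) is perfectly permitted to specialize to a point of $C_2$ (nontrivial degree-$2$ socle). Semicontinuity therefore only establishes $C_1\not\subset C_2$, which is the easy direction, and says nothing against $C_2\subset C_1$. Since in fact $\dim C_2=2d-1<(d^2+3d-6)/2=\dim C_1$, the containment $C_2\subset C_1$ is a live possibility that your argument does not exclude. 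The paper settles this by an explicit tangent-space computation (Theorem~3.5): at the ideal orthogonal to $y_1y_2^2$ one checks $\dim_k\Hom_S(I,S/I)_0=2d-1=\dim\mathcal P_d$, so this point is smooth on $\mathcal P_d$ and, having tangent space strictly smaller than $\dim\mathcal Q_d$, cannot lie on $\mathcal Q_d$.

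A secondary issue: your parametrizations of $C_1$ and $C_2$ presuppose that the relevant rank-drop loci in $\Gr(N-2,S_2)$ are irreducible, which you do not justify. The paper sidesteps this by parametrizing $\mathcal Q_d$ and $\mathcal P_d$ directly via apolarity data (a linear form $\ell$ and a quadric $q$ for $\mathcal Q_d$; a $2$-plane in $S_1^*$ and a binary cubic for $\mathcal P_d$), which are manifestly irreducible parameter spaces. Similarly, for $(1,3,3,1)$ your ``Gorenstein locus is dense because $\vec h$ is symmetric'' heuristic needs a supporting dimension count of the non-Gorenstein strata, as the paper carries out in Proposition~3.7; the symmetry of $\vec h$ alone does not guarantee it.
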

As in the ungraded case all standard graded Hilbert schemes in the plane are
smooth and irreducible~\cite{evain}.

In the case when $d=4$ and $n=8$, we describe the intersection
of the two components of $H^4_8$ explicitly. Let $S=k[x,y,z,w]$ and $S_1$ be the
vector space of linear forms in $S$. Let $S_2^*$
denote the space of symmetric bilinear forms on $S_1$.
Then, the component $G_8^4$ is isomorphic to $\mathbb A^4 \times \Gr(3,S_2^*)$, where $\Gr(3,S_2^*)$ denotes the Grassmannian of $3$-dimensional subspaces of~$S_2^*$.

\begin{thm}\label{thm:waldo}
The intersection $R^4_8\cap G^4_8$ is a prime divisor on $G^4_8$. We have the following equivalent descriptions of $R_8^4 \cap G_8^4\subset G_8^4$:
\begin{enumerate}
    \item\label{thm:waldo:set} {\bf Set Theoretic} For a point $I \in G^4_8 \cong \mathbb A^4 \times \Gr(3, S_2^*) \cong \mathbb A^4 \times \Gr(7, S_2)$ let $V$ be the corresponding $7$-dimensional subspace of $S_2$.  Then $I\in G^4_8$ belongs to the intersection if and only if the following skew-symmetric bilinear form $\langle , \rangle_I$ is degenerate:
\begin{align*}
\langle , \rangle_I\colon (S_1\otimes S_2/V)^{\otimes 2} &\to \bigwedge^3(S_2/V)\cong k \\
\langle l_1\otimes q_1, l_2\otimes q_2 \rangle_I&=(l_1l_2)\wedge q_1 \wedge q_2
\end{align*}

\item\label{thm:waldo:local} {\bf Local Equations}  Around any $I\in G^4_8$, choose an open neighborhood $U_I\subset G^4_8$ such that the universal Grassmannian bundle over the $U_I$ is generated by three sections.  Since these sections are bilinear forms we may represent them as symmetric $4\times 4$ matrices $A_1, A_2,$ and $A_3$ with entries in $\Gamma(U_I, \mathcal O_{G^4_8})$.  The local equation for $R^4_8\cap U_I$ is then the Pfaffian of the $12\times 12$ matrix:
\begin{equation*}\begin{pmatrix}
0&A_1&-A_2\\
-A_1&0&A_3\\
A_2&-A_3&0
\end{pmatrix}
\end{equation*}
Note that specializing this equation to $I$ gives the Pfaffian of $\langle, \rangle_I$.
\end{enumerate}
\end{thm}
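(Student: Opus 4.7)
Since $G^4_8 \cong \mathbb A^4 \times \Gr(3, S_2^*)$ factors as translations times the Grassmannian, and both the form $\langle,\rangle_I$ and the proposed Pfaffian equation are translation-invariant, I would first reduce to the case of homogeneous ideals $I = V \oplus S_{\geq 3}$ with $V \subset S_2$ of codimension $3$. The plan is then to establish the local equations of part (\ref{thm:waldo:local}) via a first-order deformation calculation; part (\ref{thm:waldo:set}) follows by pointwise specialization of this local statement, and the primality assertion follows from the irreducibility of the Pfaffian as a polynomial in the entries of $A_1, A_2, A_3$.

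The heart of the argument is an obstruction calculation at a homogeneous $I$. Using the presentation of $I$ by its degree-$2$ generators $V$ with syzygies in degree $3$, one first shows that the internal degree $-1$ piece of $T_I H^4_8 = \Hom_S(I, S/I)$ is canonically identified with a subquotient of $S_1 \otimes (S_2/V) \cong k^{12}$, giving first-order deformations of $I$ transverse to $T_I G^4_8$. One then computes that the primary obstruction to extending such a first-order tangent vector $v$ to a true one-parameter deformation is a skew bilinear pairing on $S_1 \otimes (S_2/V)$ with values in $\bigwedge^3(S_2/V) \cong k$; a direct calculation using the multiplication $S_1 \otimes S_1 \to S_2$ together with the wedge on $S_2/V$ identifies this obstruction precisely with $\langle,\rangle_I$. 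Hence $v$ lifts to an honest smoothing of $I$ exactly when $v$ lies in the kernel of $\langle,\rangle_I$, and a nonzero such $v$ exists exactly when the form is degenerate. In a local trivialization where $A_1, A_2, A_3$ are the symmetric $4 \times 4$ matrices of the three generating sections of the universal quotient bundle $S_2/V$, decomposing $S_1 \otimes (S_2/V)$ as three copies of $S_1$ turns the matrix of this bilinear form into the $12 \times 12$ skew block matrix displayed in the statement, since $(l_1 l_2) \wedge q_i \wedge q_j$ extracts the $q_k$-coefficient of $l_1 l_2 \bmod V$, which equals $A_k(l_1, l_2)$ for $\{i,j,k\} = \{1,2,3\}$.

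It remains to verify that this $12 \times 12$ Pfaffian is an irreducible polynomial in the entries of $A_1, A_2, A_3$, so that it cuts out a prime Cartier divisor on $G^4_8$. A convenient approach is to specialize: setting, say, $A_3$ to the identity matrix of constants reduces the Pfaffian to a polynomial in the entries of $A_1, A_2$ whose irreducibility can be checked by comparison with the Pfaffian of a smaller generic skew form. The main obstacle I anticipate is the obstruction calculation itself: rigorously verifying that $\langle,\rangle_I$ is the complete primary obstruction and that higher-order obstructions vanish whenever it does, so that first-order smoothability coincides with actual smoothability. This is typically established constructively by producing an explicit one-parameter smoothing family starting from a null vector of $\langle,\rangle_I$, which should be feasible but requires careful bookkeeping.
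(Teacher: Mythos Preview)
Your reduction to homogeneous ideals via translation invariance and the identification of the two descriptions (the bilinear form and the $12\times 12$ block matrix) are correct and match the paper's Lemma~5.2 and Lemma~5.5. The substantive issue is everything after that.

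The obstruction-theoretic route you sketch has two genuine gaps. First, the identification you assert---that the degree~$-1$ piece of $\Hom_S(I,S/I)$ is naturally a subquotient of $S_1\otimes(S_2/V)$, and that the primary obstruction on it \emph{is} $\langle,\rangle_I$---is not established, and it is not clear it is even true in the form you state. For generic $I\in G_0$ the degree~$-1$ tangent space is only $4$-dimensional (the translations); it jumps to~$\geq 12$ precisely on the singular locus, so it cannot be ``the'' $12$-dimensional space $S_1\otimes(S_2/V)$ in any uniform way. Second, and more seriously, the step ``primary obstruction vanishes $\Longleftrightarrow$ $I$ lies in $R^4_8$'' is the entire content of the theorem, and you concede it is the main obstacle. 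Even granting that $H^4_8=R^4_8\cup G^4_8$, you would still need to produce an actual curve in $H^4_8$ leaving $G^4_8$ from a null vector of $\langle,\rangle_I$, i.e.\ show all higher obstructions vanish. That is not ``careful bookkeeping''; it is an open-ended computation you have not begun.

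The paper's argument is entirely different and avoids obstruction theory. It first proves that $W_0=R^4_8\cap G_0$ is a prime divisor directly: integrality because $W_0$ is the image of the integral scheme $R^4_8$ under a projection, and codimension~$1$ by subadditivity of codimension in a smooth ambient $33$-fold (Lemma~5.6). So $W_0=V(f)$ for some $f$. To identify $f$, the paper shows $W_0$ is contained in the singular locus of $H^4_8$ along $G_0$, which is cut out by the determinant of an explicit $24\times 24$ map $\overline h$ of bundles (Lemma~5.10). A test rational curve computes this determinantal divisor to be $16H$ in $\Pic(G_0)$ (Lemma~5.12); since the rank of $\overline h$ drops by at least~$8$ along $W_0$ (tangent space dimension $\geq 33$), the multiplicity is at least~$8$, forcing $\deg W_0\leq 2$ in Pl\"ucker coordinates. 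Finally, over~$\mathbb C$, a Schur-function computation shows the Salmon--Turnbull Pfaffian is the \emph{unique} $\GL_4$-invariant of degree~$2$ (Lemmas~5.3--5.4), so $W_0=V(P)$; the result is then transported to other fields by an integrality argument. Note in particular that primality of $W_0$ is proved \emph{before} identifying the equation, not deduced from irreducibility of~$P$ as you propose.
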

The local equation from the previous theorem gives an effective criterion for deciding whether an algebra of colength $8$ belongs to the smoothable component. Moreover, it can be lifted to equations which cut out $R^4_8\subset H^4_8$. Recall that $H^4_8$ can be covered by open affines corresponding to monomial ideals in $k[x,y,z,w]$ of colength~$8$.
\begin{thm}\label{thm:eqns-radical-component}
On these monomial coordinate charts, $R^4_8\subset H^4_8$ is cut out set-theoretically by
\begin{enumerate}
\item The zero ideal on charts corresponding to monomial ideals with Hilbert functions other than $(1,4,3)$.  
\item The pullback of the equations in Theorem~\ref{thm:waldo} along the projection to homogeneous ideals in charts corresponding to monomial ideals with Hilbert function $(1,4,3)$.
\end{enumerate}
\end{thm}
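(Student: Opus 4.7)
The plan is to use Theorems~\ref{thm:h48red} and~\ref{thm:waldo} to reduce the question chart-by-chart to tracking the locus $G^4_8\setminus R^4_8$. Since $H^4_8=R^4_8\cup G^4_8$ and every local algebra parametrized by $G^4_8$ has Hilbert function exactly $(1,4,3)$, the two cases of the theorem correspond to whether a chart $U_M$ meets $G^4_8\setminus R^4_8$ or not.

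For part (1), let $M$ be a monomial ideal of colength~$8$ with Hilbert function $\vec h_M\ne(1,4,3)$. I would show $U_M\cap G^4_8=\emptyset$, which by Theorem~\ref{thm:h48red} forces $U_M\subseteq R^4_8$ and hence makes the zero ideal the correct set-theoretic cutout. The key input is upper semi-continuity of the Hilbert function of the associated graded local algebra: for every $I\in U_M$ supported at the origin, the componentwise bound $\vec h_I\le\vec h_M$ holds because the monomial point $M$ is a Gr\"obner degeneration of the family. Combined with $h_0=1$ and $\sum h_i=8$, this forces the Hilbert function $(1,4,3)$ to occur only when $\vec h_M=(1,4,3)$. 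Ideals in $U_M$ that are not supported at a single point are automatically smoothable (by spreading the support points apart), so they lie in $R^4_8$ as well.

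For part~(2), suppose $\vec h_M=(1,4,3)$. The same semi-continuity and colength argument shows that every local algebra in $U_M$ supported at the origin has Hilbert function exactly $(1,4,3)$, so this locus sits inside $H^4_{(1,4,3)}$. Consider the projection $\pi\colon U_M\to\mathcal H^4_{(1,4,3)}$ sending a local algebra to its associated graded. I claim $I$ is smoothable if and only if $\pi(I)$ is smoothable: the standard $\mathbb G_m$-action by rescaling coordinates produces a flat family connecting $I$ to $\pi(I)$ inside $U_M$, so the two points lie in the same irreducible components of $H^4_8$. Consequently $U_M\cap R^4_8=\pi^{-1}\bigl(R^4_8\cap\mathcal H^4_{(1,4,3)}\bigr)$. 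Under the identification of $\mathcal H^4_{(1,4,3)}$ with $\Gr(3,S_2^*)$ (the fiber of $G^4_8\cong\mathbb A^4\times\Gr(3,S_2^*)$ over $0\in\mathbb A^4$), Theorem~\ref{thm:waldo} cuts this locus out set-theoretically by the Pfaffian of the prescribed $12\times12$ matrix, so pulling this equation back along~$\pi$ cuts out $U_M\cap R^4_8$.

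The main technical hurdle is justifying the two inputs used above: (i) the componentwise upper semi-continuity $\vec h_I\le\vec h_M$ along $U_M$, and (ii) the smoothability descent statement that $I$ is smoothable iff $\pi(I)$ is. Fact~(i) requires carrying out the associated-graded construction in families; the $\mathbb G_m$-rescaling of coordinates yields a canonical flat degeneration of any member of $U_M$ to a homogeneous ideal, and the Hilbert function on the special fiber of that family bounds the one on the general fiber. Fact~(ii) comes from the same $\mathbb G_m$-family, which connects $I$ and $\pi(I)$ by a path inside $H^4_8$ and hence places them in the same collection of irreducible components. With these facts in place, the theorem reduces to a direct pullback of the explicit Pfaffian equation from Theorem~\ref{thm:waldo}.
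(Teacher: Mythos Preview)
Your overall strategy matches the paper's: use $H^4_8 = R^4_8 \cup G^4_8$ and track $G^4_8$ chart by chart. However, there is a genuine error in part~(2). You assert that every origin-supported $I \in U_M$ (with $\vec h_M=(1,4,3)$) itself has Hilbert function $(1,4,3)$, and you use this to define $\pi$ as the associated-graded map into $\mathcal H^4_{(1,4,3)}$. This assertion is false. Take $M$ with standard monomials $\{1,x_1,x_2,x_3,x_4,x_1^2,x_1x_2,x_2^2\}$ and
\[
I=(x_1x_3,\,x_1x_4,\,x_2x_3,\,x_2x_4,\,x_3^2,\,x_3x_4,\,x_4^2,\,x_2^2-x_1^3,\,x_1^2x_2,\,x_1^4).
\]
Then $I\in U_M$ is supported at the origin, but its Hilbert function is $(1,4,2,1)$; its associated graded ring is not in $G_0$ at all, so your $\pi$ does not land where you need it to. The paper instead uses $\pi(I)=\initial_{(1,1,1,1)}(I)$, the initial ideal for the \emph{positive} weight: concretely, take the top-degree parts of the seven quadratic chart generators together with all cubics. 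This $\pi$ is defined on every $I\in U_M$ (local or not) and always lands in $G_0$. With this corrected $\pi$, your $\mathbb G_m$-degeneration gives only the implication $I\in R^4_8\Rightarrow\pi(I)\in R^4_8$; the converse still needs the observation that on $G^4_8\cap U_M$ the map $\pi$ coincides with translation to the origin, so $I\notin R^4_8$ forces $\pi(I)\notin R^4_8$ by Theorem~\ref{thm:waldo}.

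There is also a smaller omission in part~(1): you treat ideals supported at the origin and ideals with multiple support, but elements of $G^4_8$ may be supported at a single non-origin point, and translation can move such an ideal out of $U_M$. A uniform argument uses that the standard monomials $\lambda$ form an order ideal: if $I\in G^4_8\cap U_M$ is the translate by $a$ of a homogeneous $I'\in G_0$, then for each $m\in\lambda$ the degree-$j$ part of $m(x+a)$ is a combination of degree-$j$ divisors of $m$, all of which lie in $\lambda$; hence the images of $\lambda$ in $(S/I')_j$ span at most a $|\lambda_j|$-dimensional space, forcing $|\lambda_j|\ge(1,4,3)_j$ and therefore $\vec h_M=(1,4,3)$.
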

\begin{remark} It is not known whether $H^4_8$ is reduced. If it is, then the equations in
Theorem~\ref{thm:eqns-radical-component} cut out the smoothable component scheme-theoretically.\end{remark}

The material in this paper is organized as follows: Section~\ref{sec:Background}
contains background and definitions. Section~\ref{sec:stdgraded} describes the
geometry of standard graded Hilbert schemes of degree at most $8$.
Section~\ref{sec:smoothable}
contains proofs of the smoothability of families of algebras and its main steps
are collected in Table~\ref{tbl:thm-1}. Section~\ref{sec:waldo} is devoted to
the study of the components of $H^4_8$ and their intersection.
Section~\ref{sec:pfofmain} ties together these results to give proofs of all theorems mentioned above. Finally, Section~\ref{sec:OpenQuest} proposes some open questions.

\section*{Acknowledgments}
We thank David Eisenbud, Bjorn Poonen, and Bernd Sturmfels for many insightful discussions, guidance, and key suggestions. We would also like to thank Eric Babson, Jonah Blasiak, Mark Haiman, Anthony Iarrobino, Diane Maclagan, Scott Nollet, and  Mike Stillman for helpful conversations.  Macaulay2~\cite{macaulay2} and Singular~\cite{singular} were used for experimentation.

\bibliographystyle{alpha}
\bibliography{hilb}

\end{document}